\tikzset{cross/.style={cross out, draw=red, minimum size=10*(#1-\pgflinewidth), inner sep=0pt, outer sep=0pt},
cross/.default={1pt}}
\colorlet{lightgrey}{white!93!black}
\newcolumntype{M}[1]{>{\centering\arraybackslash}m{#1}}
\newcommand{\thdef}[2]{
	\newaliascnt{#1}{theorem}  
	\newtheorem{#1}[#1]{#2}
	\aliascntresetthe{#1}  
	\newtheorem*{#1*}{#2}
	\expandafter\newcommand\expandafter{\csname #1autorefname\endcsname}{#2}
}
\newtheorem{theorem}{Theorem}[section]
\theoremstyle{definition}
\theoremstyle{remark}
\definecolor{tocolor}{rgb}{.1,.1,.1}
\definecolor{urlcolor}{rgb}{.2,.2,.6}
\definecolor{linkcolor}{rgb}{.1,.1,.5}
\definecolor{citecolor}{rgb}{.4,.2,.1}
\newcommand{\CC}{\mathbb{C}}
\newcommand{\ZZ}{\mathbb{Z}}
\newcommand{\QQ}{\mathbb{Q}}
\newcommand{\Res}{\mathrm{Res}}
\newcommand{\Bires}{\mathrm{BiRes}}
\newcommand{\cvf}[1]{\partial_{#1}}
\newcommand{\pderiv}[2]{\frac{\partial #1}{\partial #2}}
\newcommand{\lie}{\mathscr{L}}
\newcommand{\rbrac}[1]{\left(#1\right)}
\newcommand{\abrac}[1]{\left\langle#1\right\rangle}
\newcommand{\set}[2]{\left\{#1\,\middle|\,#2\right\}}
\newcommand{\mapdef}[5]{
	\begin{array}{ccccc}
	#1 &:& #2 &\to& #3 \\
		&&  #4 &\mapsto& #5
	\end{array}
}
\newcommand{\defn}[1]{\textbf{\emph{#1}}}
\newcommand{\cO}{\mathcal{O}} 
\newcommand{\acan}{\mathcal{K}^{-1}} 
\newcommand{\can}{\mathcal{K}} 
\newcommand{\cI}{\mathcal{I}} 
\newcommand{\sL}{\mathcal{L}} 
\newcommand{\cE}{\mathcal{E}} 
\newcommand{\cJ}{\mathcal{J}} 
\newcommand{\fm}{\mathfrak{m}} 
\newcommand{\cEnd}{\mathcal{E}nd}
\newcommand{\tshf}[1]{\mathcal{T}_{#1}} 
\newcommand{\cotshf}[1]{\mathcal{T}^\vee_{#1}} 
\newcommand{\forms}[1][\bullet]{\Omega^{#1}} 
\newcommand{\der}[2][\bullet]{\mathscr{X}^{#1}_{#2}} 
\newcommand{\hook}[1]{\iota_{#1}} 
\newcommand{\spc}[1]{\mathsf{#1}}
\newcommand{\X}{\spc{X}}
\newcommand{\tX}{\widetilde{\spc{X}}}
\newcommand{\B}{\spc{B}}
\newcommand{\E}{\spc{E}}
\newcommand{\F}{\spc{F}}
\newcommand{\bL}{\spc{L}}
\newcommand{\Y}{\spc{Y}}
\newcommand{\Z}{\spc{Z}}
\newcommand{\W}{\spc{W}}
\newcommand{\Wc}{\spc{W^\circ}}
\newcommand{\G}{\spc{G}}
\newcommand{\U}{\spc{U}}
\newcommand{\D}{\spc{D}}
\newcommand{\T}{\spc{T}}
\newcommand{\R}{\spc{R}}
\newcommand{\Dgn}[1]{\spc{Dgn}_{#1}}
\newcommand{\Zeros}{\spc{Zeros}}
\newcommand{\PP}{\mathbb{P}}
\newcommand{\bS}{\spc{S}}
\newcommand{\Aut}{\spc{Aut}}
\newcommand{\Hilb}{\spc{Hilb}}
\newcommand{\Pois}{\spc{Pois}}
\newcommand{\LogSymp}{\spc{LogSymp}}
\newcommand{\cohlgy}[2][\bullet]{\spc{H}^{#1}\!\rbrac{#2}}
\newcommand{\hlgy}[2][\bullet]{\spc{H}_{#1}(#2)}
\newcommand{\tu}{\widetilde{u}}
\newcommand{\tv}{\widetilde{v}}
\newcommand{\g}{\mathfrak{g}}
\newcommand{\ft}{\mathfrak{t}}
\newcommand{\sln}[1]{\mathfrak{sl}\rbrac{#1}}
\newcommand{\gln}[1]{\mathfrak{gl}\rbrac{#1}}
\newcommand{\aff}[1]{\mathfrak{aff}\rbrac{#1}}
\newcommand{\spn}[1]{\mathfrak{sp}\rbrac{#1}}
\newcommand{\SL}[1]{\spc{SL}\!\rbrac{#1}}
\newcommand{\SU}[1]{\spc{SU}\!\rbrac{#1}}
\newcommand{\Gr}[1]{\spc{Gr}\!\rbrac{#1}}
\newcommand{\red}{\mathrm{red}}
\newcommand{\sing}{\mathrm{sing}}
\newcommand{\codim}{\mathrm{codim}}
\newcommand{\rank}{\textrm{rank}}
\DeclareMathOperator{\ch}{\mathrm{ch}}
\newcommand{\classseven}{$\textrm{VII}_0$}
\newcommand{\cubicscale}{0.25}
\begin{document}

\title{Constructions and classifications of projective Poisson varieties}

\author{Brent Pym\thanks{School of Mathematics, University of Edinburgh, \href{mailto:brent.pym@ed.ac.uk}{brent.pym@ed.ac.uk}}}

\maketitle
\vspace{-0.5cm}
\begin{abstract}
This paper is intended both an introduction to the algebraic geometry of holomorphic Poisson brackets, and as a survey of results on the classification of projective Poisson manifolds that have been obtained in the past twenty years.  It is based on the lecture series delivered by the author at the Poisson 2016 Summer School in Geneva.

The paper begins with a detailed treatment of Poisson surfaces, including adjunction, ruled surfaces and blowups, and leading to a statement of the full birational classification.  We then describe several constructions of Poisson threefolds, outlining the classification in the regular case, and the case of rank-one Fano threefolds (such as projective space).  Following a brief introduction to the notion of Poisson subspaces, we discuss Bondal's conjecture on the dimensions of degeneracy loci on Poisson Fano manifolds.  We close with a discussion of log symplectic manifolds with simple normal crossings degeneracy divisor, including a new proof of the classification in the case of rank-one Fano manifolds.   
\end{abstract}

\setcounter{tocdepth}{2}
\tableofcontents


\section{Introduction}

\subsection{Basic definitions and aims}
This paper is an introduction to the geometry of holomorphic Poisson structures, i.e.~Poisson brackets on the ring of holomorphic or algebraic functions on a complex manifold or algebraic variety (and sometimes on more singular objects, such as schemes and analytic spaces).  It grew out of a mini-course delivered by the author at the ``Poisson 2016'' summer school in Geneva.  The theme for the course was how the methods of algebraic geometry can be used to construct and classify Poisson brackets.  Hence this paper also serves a second purpose: it is an overview of results on the classification of projective Poisson manifolds that have been obtained by several authors over the past couple of decades, with some added context for the results and the occasional new proof.  

When one first encounters Poisson brackets, it is often in the setting of classical mechanics, which is usually formulated using $C^\infty$ manifolds.  However, there are many situations in which one naturally encounters Poisson brackets that are actually holomorphic:
\begin{itemize}
\item Classical integrable systems
\item Moduli spaces in gauge theory, algebraic geometry and low-dimensional topology
\item Noncommutative ring theory
\item Lie theory and geometric representation theory
\item Cluster algebras
\item Generalized complex geometry
\item String theory
\item \ldots
\end{itemize}
One is therefore lead to develop the holomorphic theory in parallel with its $C^\infty$ counterpart. While the two settings have much in common, there are also many important differences.  These differences are a consequence of the rigidity of holomorphic and algebraic functions, and they will play a central role in our discussion.

The first (albeit minor) difference comes already in the definition: while a Poisson bracket on a $C^\infty$ manifold is simply defined by a Poisson bracket on the ring of global smooth functions, this definition is no longer appropriate in the holomorphic setting.  The problem is that a complex manifold $\X$ may have very few global holomorphic functions; for instance, if $\X$ is compact and connected, then the maximum principle implies that every holomorphic function on $\X$ will be constant.  Thus, in order to define the bracket on $\X$, we must define it in local patches which can be glued together in a globally consistent way.  In other words, we should replace the ring of global functions with the corresponding sheaf:
\begin{definition}\label{def:poisson}
Let $\X$ be a complex manifold or algebraic variety, and denote by $\cO_\X$  its sheaf of holomorphic functions.  A \defn{(holomorphic) Poisson structure} on $\X$ is a $\CC$-bilinear operation
\[
\{\cdot,\cdot\} : \cO_\X \times \cO_\X \to \cO_\X
\]
satisfying the usual axioms for a Poisson bracket.  Namely, for all $f,g,h\in\cO_\X$, we have the following identities
\begin{enumerate}
\item Skew-symmetry:
\[
\{f,g\} = -\{g,f\}
\]
\item Leibniz rule:
\[
\{f,gh\} = \{f,g\}h+g\{f,h\}
\]
\item  Jacobi identity:
\[
\{f,\{g,h\}\} + \{g,\{h,f\}\} + \{h,\{f,g\}\}= 0.
\]
\end{enumerate}
\end{definition}

Let us denote by $\tshf{\X}$ the sheaf of holomorphic vector fields on $\X$.  These are holomorphic sections of the tangent bundle of $\X$, or equivalently, they are derivations of $\cO_\X$.  As in the $C^\infty$ setting, a holomorphic Poisson bracket can be encoded in a global holomorphic bivector field
\begin{align*}
\pi &\in \Gamma(\X,\wedge^2\tshf{\X}),
\end{align*}
using the pairing between vectors and forms: $\{f,g\} = \abrac{df\wedge dg , \pi}$.
Thus, in local holomorphic coordinates $x_1,\ldots,x_n$, we have
\[
\pi = \sum_{i<j} \pi^{ij}\cvf{x_i}\wedge\cvf{x_j}
\]
where $\pi^{ij} = \{x_i,x_j\}$ denotes the Poisson brackets of the coordinates.   The Jacobi identity for the bracket is equivalent to the condition
\[
[\pi,\pi] = 0 \in \Gamma(\X,\wedge^3\tshf{\X}).
\]
on the Schouten bracket of $\pi$.  

Every function $f \in \cO_\X$ has a Hamiltonian vector field
\[
H_f = \hook{df}\pi \in \tshf{\X}
\]
which acts as a derivation on $g \in \cO_\X$ by
\[
H_f(g) = \{f,g\}.
\]
If we start at a point $p \in \X$ and apply the flows of all possible Hamiltonian vector fields, we sweep out an even-dimensional immersed complex submanifold, called the leaf through $p$.  The bivector can then be restricted to the leaf, and inverted to obtain a holomorphic symplectic form.  Thus $\X$ has a natural foliation by holomorphic symplectic leaves.  This foliation will typically be singular, in the sense that there will be leaves of many different dimensions. 

One of the major challenges in Poisson geometry is to deal in an efficient manner with the singularities of the foliation, and this is an instance where the holomorphic setting departs significantly from the $C^\infty$ one.  Indeed,  the powerful tools of algebraic geometry give much tighter control over the local and global behaviour of holomorphic Poisson structures.

Thus, our aim is to give some introduction to how an algebraic geometer might think about Poisson brackets.  We will focus on the related problems of \defn{construction} and \defn{classification}: how do we produce holomorphic Poisson structures on  compact complex manifolds, and how do we know when we have found them all?

The paper is organized as a sort of induction on dimension.  We begin in \autoref{sec:surfaces} with a detailed discussion of Poisson surfaces, focusing in particular on the projective plane, ruled surfaces and blowups, and culminating in a statement of the full birational classification~\cite{Bartocci2005,Ingalls1998}.  In \autoref{sec:threefolds}, we discuss many types of Poisson structures on threefolds---enough to cover all of the cases that appear in the classifications of regular Poisson threefolds~\cite{Druel1999}, and Poisson Fano threefolds with cyclic Picard group~\cite{Cerveau1996,Loray2013}.  \autoref{sec:degeneracy} discusses the general notions of Poisson subspaces and degeneracy loci (where the foliation has singularities).  We highlight the intriguing phenomenon of excess dimension that is commonplace for these loci, as formulated in a conjecture of Bondal.

We close in \autoref{sec:log-symp} with an introduction to log symplectic manifolds: Poisson manifolds that have an open dense symplectic leaf, but degenerate along a reduced hypersurface.  We give many  natural examples, and   discuss in some detail the case in which the hypersurface is a simple normal crossings divisor, leading to a streamlined proof of the classification~\cite{Lima2014} for Fano manifolds with cyclic Picard group.  We also mention a recent result of the author~\cite{Pym2016} in the case of elliptic singularities.  Although the material in this section was not covered in the lecture series, the author felt that it should be included  here, given the focus on classification.

The lecture series on which this article is based was intended for an audience that already has some familiarity with Poisson geometry, but has potentially had less exposure to algebraic or complex geometry.  We have tried to keep this article similarly accessible.  Thus, we recall a number of basic algebro-geometric concepts (at the level of Griffiths and Harris' book~\cite{Griffiths1994}) by illustrating how they arise in our specific context.  On the other hand, we hope that because of the focus on examples, experts in algebraic geometry will also find this article useful, both as an introduction to the geometry of Poisson brackets, and as a guide to the growing literature on the subject.  Either way, the reader may find it helpful to complement the present article with a  more theoretical treatment of the foundations, such as Polishchuk's paper~\cite{Polishchuk1997}, the books by Dufour--Zung~\cite{Dufour2005} and Laurent-Gengoux--Pichereau--Vanhaecke~\cite{Laurent-Gengoux2013}, and the author's PhD thesis~\cite{Pym2013}.

\subsection{What is meant by classification?}

Before we begin, we should make a few remarks to clarify what we mean by ``classification'' of Poisson structures.  There are essentially two types of classifications: local and global. 

In the local case, one is looking for nice local normal forms for Poisson brackets---essentially, coordinate systems in which the Poisson bracket takes on a simple standard form.  While these issues will come up from time to time, they will not be the main focus of this paper.  We instead encourage the interested reader to consult the book~\cite{Dufour2005} for an introduction.

In the global case,  one would ideally like a list of all compact holomorphic Poisson manifolds (up to isomorphism), but there are far too many such manifolds to have any reasonable hope of classification.  One way to get some control over the situation is to look for a birational classification, i.e.~a list of Poisson manifolds from which all others can be constructed by simple transformations, such as blowing up.  As we shall see, this program has been completely realized in the case of surfaces.

Another way to get some control is to focus our attention on classifying all Poisson structures on a fixed compact complex manifold $\X$.  To see that this is potentially a tractable problem, we observe that the space $\Gamma(\X,\wedge^2\tshf{\X})
$ of bivector fields is a finite-dimensional complex vector space.  When written in a basis for this vector space, the integrability condition $[\pi,\pi] = 0$ amounts to a finite collection of homogeneous quadratic equations.  These equations therefore determine an algebraic subvariety
\[
\Pois(\X) \subset \Gamma(\X,\wedge^2\tshf{\X}).
\]
so that we can try to understand the \defn{moduli space of Poisson structures on $\X$}, i.e.~the quotient
\[
\frac{\Pois(\X)}{\Aut(\X)},
\]
where $\Aut(\X)$ is the  group of holomorphic automorphisms of $\X$.

In general, the moduli space will have many (but only finitely many) irreducible components, corresponding to qualitatively different types of Poisson structures on $\X$.   A reasonable goal for the classification is to list of all of the irreducible components in the moduli space, and describe the geometry of the Poisson structures that correspond to some dense open subset in each component.  Achieving this goal gives one a fairly good understanding of the qualitative behaviour of Poisson structures on $\X$.  As we shall see, this programme has now been carried out for several important three-dimensional manifolds, particularly the projective space $\PP^3$.  But in higher dimensions, one encounters many new challenges, and despite some recent progress for $\PP^4$, the classification remains open even in that case.

\subsection{The role of Fano manifolds}

In this paper, we will focus mostly on projective manifolds, i.e.~compact complex manifolds $\X$ that admit holomorphic embeddings in some projective space $\PP^n$, although we will also make some remarks in the non-projective setting.  In fact, a number of the main results we shall mention pertain to a particular subclass of projective manifolds (the Fano manifolds), so we should briefly explain why they are important from the perspective of Poisson geometry.

Let us recall that, roughly speaking, the minimal model program seeks to build an arbitrary projective manifold out of simpler pieces (up to birational equivalence).  The basic building blocks $\X$ come in three distinct types according to their Ricci curvatures, as measured by the first Chern class $c_1(\X) \in \cohlgy[2]{\X,\ZZ}$ of the tangent bundle:
\begin{itemize}
\item \defn{Canonically polarized varieties}, for which $c_1(\X) < 0$;
\item \defn{Calabi--Yau varieties}, for which $c_1(\X) = 0$; and
\item \defn{Fano varieties}, for which $c_1(\X) > 0$.
\end{itemize}
The notation $c_1(\X) > 0$ means that $c_1(\X)$ is an ample class, i.e.~that it can be represented by a K\"ahler form, or equivalently that $\int_\Y c_1(\X)^{\dim \Y} > 0$ for every closed subvariety $\Y \subset \X$. Similarly, $c_1(\X) < 0$ means that $-c_1(\X)$ is ample.

It therefore seems sensible to focus on the Poisson geometry of each of these three types of manifolds separately.  First of all, while canonically polarized manifolds exhibit rich and interesting algebraic geometry, they do not offer much in the way of Poisson geometry.  Indeed, the Kodaira--Nakano vanishing theorem implies that they admit no nonzero bivector fields, and hence the only Poisson bracket on such a manifold is identically zero.

Meanwhile, on a Calabi--Yau manifold $\X$, the line bundle $\det \tshf{\X}$ is holomorphically trivial.  By choosing a trivialization, we obtain an isomorphism $\wedge^\bullet\tshf{\X} \cong \forms[\dim \X - \bullet]_\X$, so that Poisson bivectors may alternatively be viewed as global holomorphic forms of degree $\dim \X - 2$.  To see what such Poisson structures can look like, we recall the following fundamental fact.
\begin{theorem}[\cite{Beauville1983,Bogomolov1974,Michelsohn1982}]
Suppose that $\X$ is a compact K\"ahler manifold with $c_1(\X) = 0$.  Then $\X$ has a finite cover that is a product
\[
\T \times \prod_j \bS_j \times \prod_k \Y_k
\]
of a compact complex torus $\T \cong \CC^n/\Lambda$, irreducible holomorphic symplectic manifolds $\bS_j$, and Calabi--Yau manifolds $\Y_k$ of dimension $d_k \ge 3$ such that $\Gamma(\Y_k,\forms[p]_{\Y_k}) = 0$ for $0 < p < d_k$.
\end{theorem}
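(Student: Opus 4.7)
The plan is to combine Yau's solution of the Calabi conjecture with the classical de Rham decomposition and Berger's classification of Riemannian holonomy. The idea is that the vanishing of $c_1$ is a topological condition that can be promoted, in the K\"ahler setting, to the existence of a canonical Ricci-flat metric, and the rich structure of such metrics forces the decomposition.

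First I would apply Yau's theorem: since $\X$ is compact K\"ahler with $c_1(\X) = 0$, every K\"ahler class contains a unique Ricci-flat K\"ahler metric $g$. The restricted holonomy group $\mathrm{Hol}^0(g) \subset \spc{U}(n)$ then sits inside $\spc{SU}(n)$ because Ricci-flatness is equivalent to the holonomy reducing to $\spc{SU}(n)$ (the Ricci form being the curvature of the canonical bundle). Next, I would pass to the universal cover $\widetilde{\X}$ equipped with the pulled-back Ricci-flat K\"ahler metric; being simply connected and Ricci-flat K\"ahler, the de Rham decomposition theorem produces an isometric splitting
\[
\widetilde{\X} \;\cong\; \CC^{n_0} \,\times\, \prod_j \widetilde{\bS}_j \,\times\, \prod_k \widetilde{\Y}_k,
\]
where the flat factor $\CC^{n_0}$ absorbs all the trivial holonomy, and the remaining factors are simply connected, complete K\"ahler manifolds whose restricted holonomy acts irreducibly on the tangent space.

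The key structural input is then Berger's classification: an irreducible Ricci-flat K\"ahler holonomy group must be either $\spc{SU}(d_k)$ (giving a strict Calabi--Yau factor $\widetilde{\Y}_k$) or $\spc{Sp}(m_j)$ (giving a hyperk\"ahler/irreducible holomorphic symplectic factor $\widetilde{\bS}_j$). In the $\spc{Sp}(m_j)$ case, the parallel complex symplectic form makes $\widetilde{\bS}_j$ an irreducible holomorphic symplectic manifold by definition; in the $\spc{SU}(d_k)$ case, the Bochner principle identifies holomorphic $p$-forms with parallel forms, and irreducibility of the holonomy representation on $\wedge^p T^\vee$ for $0 < p < d_k$ forces $\Gamma(\widetilde{\Y}_k, \Omega^p) = 0$ in that range. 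Compactness of these factors is then extracted from a Cheeger--Gromoll-type argument: any Ricci-flat compact K\"ahler manifold has virtually abelian fundamental group acting on the flat factor, so after passing to a finite \'etale cover of $\X$ one can arrange that the $\pi_1$ of $\X$ preserves the decomposition factor by factor and acts on $\CC^{n_0}$ by translations in a lattice $\Lambda$. This gives the stated product decomposition with $\T = \CC^{n_0}/\Lambda$.

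The main obstacle is the second step, namely turning the splitting of the \emph{universal} cover into a splitting of a finite cover of $\X$ itself. This requires controlling the action of $\pi_1(\X)$ on $\widetilde{\X}$: one must show it permutes isomorphic irreducible factors and acts by translations on the flat factor, which is the content of a Bieberbach-type theorem in this context (essentially that the fundamental group of a compact Ricci-flat K\"ahler manifold is virtually a lattice in the translation group of $\CC^{n_0}$ times a product of finite groups permuting the non-flat factors). The analytic inputs (Yau, de Rham, Berger, Bochner) are each highly nontrivial but standard; the delicate combinatorial/group-theoretic work is in matching these inputs to produce a genuine \emph{finite} cover that splits as a product.
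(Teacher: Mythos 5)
The paper does not prove this statement: it is quoted as a known background result (the Beauville--Bogomolov decomposition theorem) with references to Beauville, Bogomolov and Michelsohn, so there is no internal proof to compare against. Your sketch is precisely the standard argument from those references --- Yau's theorem to produce a Ricci-flat K\"ahler metric, holonomy reduction into $\SU{n}$, the Cheeger--Gromoll and de Rham splittings of the universal cover, Berger's list to sort the irreducible factors into $\SU{d}$ and $\spc{Sp}(m)$ types, the Bochner principle to compute the holomorphic forms on each factor, and a Bieberbach-type analysis of the $\pi_1$-action to descend to a finite cover --- and it correctly flags the descent step as the delicate one, so as an outline it is accurate and consistent with the cited literature.
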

One can easily show that any Poisson structure on $\X$ must similarly decompose as a product, so we may as well seek to understand the Poisson geometry of the individual factors.  The simplest are the factors $\Y_k$, which evidently admit no Poisson structures whatsoever.  The next simplest is the torus $\T$; its Poisson structures are all induced by constant bivectors on $\CC^n$.   Finally, we have the irreducible symplectic manifolds, which are the same thing as compact hyper-K\"ahler manifolds.  They admit a unique Poisson structure (up to rescaling), and it is induced by a holomorphic symplectic form.  The theory of these manifolds is beautiful, and quite well developed, but we shall not discuss it in this paper.  We refer, instead, to the survey by Huybrechts~\cite{Huybrechts2003}, although we note that the subject has evolved in subsequent years.  For our purposes, the upshot of this discussion is that Poisson geometry on Calabi--Yau manifolds is essentially symplectic geometry ``in families'', by which we mean that all of the symplectic leaves have the same dimension.

Thus, we are left with the Fano manifolds, typical examples of which include projective spaces $\PP^n$, hypersurfaces in $\PP^n$ of low degree, Grassmannians, flag manifolds, and various moduli spaces in gauge theory and algebraic geometry.   We refer to \cite{Iskovskikh1999} for an overview of the general structure and classification of these manifolds.

While Fano manifolds admit no global holomorphic differential forms of positive degree,  they often do carry Poisson structures, and it turns out that the symplectic foliation of a nontrivial Poisson structure on a Fano manifold is always singular.  Indeed, this foliation is typically very complicated, even for the simplest case $\X = \PP^n$.  Thus, amongst the basic building blocks listed above, it is only the Fano manifolds that truly exhibit the difference between symplectic structures and general Poisson structures.  In the past several years, there have been a number of nontrivial results on the structure and classification of Poisson Fano manifolds, but the subject is still in infancy compared with the symplectic case.  No doubt, some new conceptual understanding will be required in order to make significant progress in this area.

\paragraph{Acknowledgements:} I would like to thank the organizers of the Poisson 2016 Summer School for inviting me to give these lectures, and for encouraging me to produce this survey.  I would also like to thank Victor Mouquin and Mykola Matviichuk for acting as TAs for the mini-course, and the many participants of the school for their interest in the material and their insightful questions.  Finally, I would like to thank Georges Dloussky for his very helpful correspondence regarding class VII Poisson surfaces.  At various stages, this work was supported by EPSRC Grant EP/K033654/1; a Junior Research Fellowship at Jesus College, Oxford; and a William Gordon Seggie Brown Research Fellowship at the University of Edinburgh.  The three-dimensional renderings were created using \textsc{surfex}~\cite{Holzer2008}.

\section{Poisson surfaces}
\label{sec:surfaces}

\subsection{Basics of Poisson surfaces}

Recall that a complex surface is simply a complex manifold of complex dimension two.  In other words, every point has a neighbourhood isomorphic to an open set in $\CC^2$, and the transition maps between the different coordinate charts are holomorphic.  A \defn{(complex) Poisson surface} is a complex surface $\X$ equipped with a holomorphic Poisson bracket as in \autoref{def:poisson}.  In this section, we will examine the local and global behaviour of Poisson structures on surfaces.  In the end, we will arrive at the full birational classification:  a relatively short list of  Poisson surfaces from which all others can be obtained by simple modifications, known as blowups.

\subsubsection{Local structure}

To warm up, let us consider the local situation of a Poisson bracket defined in a neighbourhood of the origin in $\CC^2$.  Using the standard coordinates $x,y$ on $\CC^2$, we can write
\[
\{x,y\} = f(x,y)
\]
where $f$ is a holomorphic function.  The corresponding bivector field is given by
\[
\pi = f \cvf{x}\wedge \cvf{y}.
\]
Thus, once we have fixed our coordinates, the Poisson bracket is determined by the single function $f$.  Because of the dimension, this bivector automatically satisfies $[\pi,\pi]=0$, so the Jacobi identity does not impose any constraints on the function $f$.

Away from the locus where $f$ vanishes, we can invert $\pi$ to obtain a symplectic two-form
\[
\omega = \frac{dx\wedge dy}{f}.
\]
Applying the holomorphic version of Darboux's theorem, we may find local holomorphic coordinates $p$ and $q$ in which 
\[
\omega = dp \wedge dq,
\]
or equivalently
\[
\pi = \cvf{q}\wedge\cvf{p}.
\]
Thus, the local structure of $\pi$ is completely understood in this case.

But things are more complicated near the zeros of  $f$.  Without loss of generality, let us assume that $f$ vanishes at the origin. Recall that the zero locus of a single holomorphic function always has complex codimension one.  Hence if $f(0,0)=0$, there must actually be a whole complex curve $\D \subset \CC^2$, passing through the origin, on which $f$ vanishes.  Let us suppose for simplicity that $f$ is a polynomial.  Then it will have  a factorization into a finite number of  irreducible factors:
\[
f = f_1^{k_1} f_2^{k_2} \cdots f_n^{k_n}.
\]
Therefore $\D$ will be a union of the vanishing sets $\D_1,\ldots,\D_n$ of $f_1,\ldots,f_n$, the so-called \defn{irreducible components}.  (If $f$ is not polynomial, then $\D$ may have an infinite number of irreducible components, but the number of components will be locally finite, i.e. there will only be finitely many in a given compact subset of $\CC^2$.)  
Several examples of Poisson structures and the corresponding curves are shown in \autoref{fig:planar}; these pictures represent real two-dimensional slices of the four-dimensional space $\CC^2$.

\begin{figure}[h]
\centering
\begin{subfigure}{0.4\textwidth}
\centering
\begin{tikzpicture}[scale=1.5]
\draw[thin,fill=lightgrey] (-1,-1) rectangle (1,1);
\draw[very thick,color=red] (0,-1) -- (0,1);
\end{tikzpicture}
\caption{$x\,\cvf{x}\wedge\cvf{y}$}
\end{subfigure}
\begin{subfigure}{0.4\textwidth}
\centering
\begin{tikzpicture}[scale=1.5]
\draw[thin,fill=lightgrey] (-1,-1) rectangle (1,1);
\draw[very thick,color=red] (-1,0) -- (1,0);
\draw[very thick,color=red] (0,-1) -- (0,1);
\end{tikzpicture}
\caption{$xy\,\cvf{x}\wedge\cvf{y}$}
\end{subfigure}

\vspace{0.5cm}
\begin{subfigure}{0.4\textwidth}
\centering
\begin{tikzpicture}[scale=1.5]
\draw[thin,fill=lightgrey] (-1,-1) rectangle (1,1);
\draw[domain=-1:1,smooth,very thick,color=red] plot ({\x*\x},{\x*\x*\x});
\end{tikzpicture}
\caption{$(x^3-y^2)\,\cvf{x}\wedge\cvf{y}$}
\end{subfigure}
\begin{subfigure}{0.4\textwidth}
\centering
\begin{tikzpicture}[scale=1.5]
\draw[thin,fill=lightgrey] (-1,-1) rectangle (1,1);
\draw[very thick,color=red] (0,-1) -- (0,1);
\draw[very thick,dashed,color=red] (0.05,-1) -- (0.05,1);
\draw[domain=-1:1,smooth,very thick,color=red] plot ({\x},{\x*\x});
\end{tikzpicture}
\caption{$x^2(y-x^2)\,\cvf{x}\wedge\cvf{y}$}
\end{subfigure}
\caption{Zero loci of Poisson structures on $\CC^2$; the dashed line indicates the presence of a component with multiplicity two.}\label{fig:planar}
\end{figure}
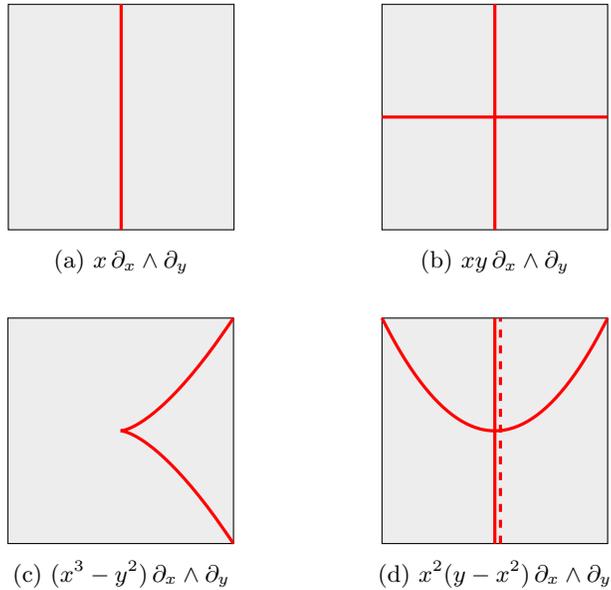

If two bivectors $\pi$ and $\pi'$ have the same zero set, counted with multiplicities, then they must differ by an overall factor
\[
\pi  = g \pi',
\]
where $g$ is a nonvanishing holomorphic function.  While the behaviour of these two bivectors is clearly very similar, they will not, in general, be isomorphic, i.e.~we can not take one to the other by a suitable coordinate change on $\CC^2$.  Here is a simple example:

\begin{exercise}\label{exer:noniso}
Given any constant $\lambda \in \CC$, define a Poisson bracket on $\CC^2$ by the formula
\begin{align}
\{x,y\}_\lambda = \lambda \,xy, \label{eqn:nodal}
\end{align}
where $x,y$ are the standard coordinates on $\CC^2$.  Show that the brackets $\{\cdot,\cdot\}_{\lambda}$ and $\{\cdot,\cdot\}_{\lambda'}$ are isomorphic if and only if $\lambda = \pm \lambda'$.  Conclude that the isomorphism class of a Poisson structure on $\CC^2$ depends on more information than just the divisor on which it vanishes. \qed
\end{exercise}

In fact, the constant $\lambda$ appearing in \eqref{eqn:nodal} is the only addition piece of information required to understand the local structure of the bracket in this case.  More precisely, suppose that $\pi$ is a Poisson structure on a surface $\X$, and that $\D$ is the curve on which it vanishes.  Suppose that $p \in \D$ is a nodal singular point of $\D$.  (This means that, in a neighbourhood of $p$, the curve $\D$ consists of two smooth components with multiplicity one that intersect transversally at $p$.)  Then one can find a constant $\lambda \in \CC$, and coordinates $x,y$ centred at $p$ in which the Poisson bracket has the form \eqref{eqn:nodal}.

In general, finding a local normal form for the bracket in a neighbourhood of a singular point of $\D$ can be complicated.  The main result in this direction is a theorem of Arnold, which gives a local normal form in the neighbourhood of a simple singularity of the curve $\D$.  Since we shall not need the precise form of the result, we shall omit it.  We refer to the original article~\cite{Arnold1987} for details; see also \cite[Section 2.5.1]{Dufour2005} and \cite[Section 9.1]{Laurent-Gengoux2013}.

\subsection{Poisson structures on the projective plane}\label{sec:p2}

We will be mainly concerned with Poisson structures on compact  complex surfaces.  The most basic example is the projective plane:
\[
\PP^2 = \{\textrm{lines through 0 in }\CC^3\} = (\CC^3 \setminus \{0\}) / \CC^*,
\]
where $\CC^*$ acts by rescaling.  The equivalence class of a point $(x,y,z) \in \CC^3\setminus \{0\}$ is denoted by $[x:y:z] \in \PP^2$, 
so that
\[
[x:y:z] = [\lambda x:\lambda y:\lambda z]
\]
for all $\lambda \in \CC^*$. 

Let us examine the possible behaviour of a Poisson bracket on $\PP^2$, defined by the global holomorphic bivector
\[
\pi \in \Gamma(\PP^2,\wedge^2\tshf{\PP^2}).
\]
We will determine the behaviour of $\pi$ in the three standard coordinate charts on $\PP^2$, given by the open dense sets
\begin{align*}
\U_1 &=  \set{ [x:y:z]  \in \PP^2}{x \ne 0} \\
\U_2 &= \set{ [x:y:z]  \in \PP^2}{y \ne 0} \\
\U_3 &= \set{ [x:y:z]  \in \PP^2}{z \ne 0},
\end{align*}
For example, the coordinates associated to $\U_1$ are given by
\begin{align*}
u([x:y:z]) &= \frac{y}{x} & v([x:y:z]) &= \frac{z}{x},
\end{align*}
giving an isomorphism $\U_1 \cong \CC^2$.  Thus, any Poisson bracket on $\PP^2$ must be represented in these coordinates by
\[
\{u,v\} = f
\]
for some holomorphic function $f$ defined on all of $\CC^2$.  In other words, the bivector has the form
\[
\pi|_{\U_1} = f \cvf{u}\wedge \cvf{v}.
\]

Meanwhile,  in the chart $\U_2$ we have the coordinates   $\widetilde u, \widetilde v$ given by
\begin{align*}
\tu([x:y:z]) &= \frac{x}{y} & \tv([x:y:z]) &= \frac{z}{y}.
\end{align*}
They are related to the original coordinates by $\tu = u^{-1}$ and $\tv = u^{-1}v$ on the overlap of the charts.  We can therefore compute
\begin{align*}
\{\tu,\tv \} &= \{u^{-1},u^{-1}v\} \\
&= \{u^{-1},u^{-1}\}v + u^{-1}\{u^{-1},v\} \\
&= 0 + u^{-1}\cdot \rbrac{-u^{-2}\{u,v\}} \\
&= -u^{-3} f(u,v) \\
&= -\tu^3 f(\tu^{-1},\tu^{-1}\tv).
\end{align*}
Thus, taking the Taylor expansion of $f$, we find
\[
\pi|_{\U_2} = \rbrac{\tu^3 \sum_{j,k=0}^\infty a_{jk}\tu^{-(j+k)}\tv^k}\cvf{\tu}\wedge \cvf{\tv}
\]
for some coefficients $a_{jk}\in \CC$.  Since $\pi$ is holomorphic on the whole chart $\U_2$ we must have that $a_{jk} = 0$ whenever $j+k > 3$; otherwise, $\pi$ would have a pole when $\tu = 0$.  These constraints are equivalent to requiring that $f$ be a polynomial in $u$ and $v$ of degree at most three.

A similar calculation in the other chart evidently yields the same result.  We conclude that a Poisson bracket on $\PP^2$ is described in any affine coordinate chart by a cubic polynomial.  Conversely, given a cubic polynomial in an affine chart, we obtain a Poisson structure on $\PP^2$.

Once again, the zero set $\D = \Zeros(\pi)$ determines $\pi$ up to rescaling by a global nonvanishing holomorphic function, but now every such function is constant because $\PP^2$ is compact.  Converting from the affine coordinates $(u,v)$ to the ``homogeneous coordinates'' $x,y,z$, we arrive at the following
\begin{proposition}\label{prop:p2cubic}
A Poisson structure $\pi$ on $\PP^2$ always vanishes on a cubic curve, meaning that
\[
\D= \set{[x:y:z] \in \PP^2}{F(x,y,z) = 0}
\]
where $F$ is a homogeneous polynomial of degree three.  This curve, and  the multiplicities of its components, determine $\pi$ up to rescaling by a nonzero constant.
\end{proposition}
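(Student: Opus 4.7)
The plan is to build directly on the coordinate computation already carried out in the excerpt.

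First I would convert the affine-chart description into homogeneous coordinates. The computation of $\pi|_{\U_1}$ and the compatibility on $\U_1\cap\U_2$ showed that $\pi|_{\U_1}=f(u,v)\,\cvf{u}\wedge\cvf{v}$ where $f$ is a polynomial of total degree at most three in $u=y/x$ and $v=z/x$. Clearing denominators, the homogenization $F(x,y,z):=x^3 f(y/x,z/x)$ is a homogeneous polynomial of degree three, and on $\U_1$ the zero locus of $\pi$ coincides with $\{F=0\}$. The same argument applied in the coordinate charts $\U_2,\U_3$ (by symmetry of the calculation under permutation of $x,y,z$) shows that the hypersurface $\{F=0\}\subset\PP^2$ captures the full zero locus $\D=\Zeros(\pi)$. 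This gives the first assertion.

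Next I would show $\D$ (with multiplicities) determines $\pi$ up to scale. The key point is that $\wedge^2 \tshf{\PP^2}$ is a line bundle, so any nonzero global section has a well-defined effective divisor of zeros, and the ratio of two such sections is a global meromorphic function on $\PP^2$ whose divisor is the difference of their zero divisors. Given two Poisson bivectors $\pi,\pi'$ with the same zero divisor (counted with multiplicity), the quotient $g:=\pi/\pi'$ therefore defines a meromorphic function on $\PP^2$ with trivial divisor, hence extends to a global holomorphic nonvanishing function. Since $\PP^2$ is compact and connected, the maximum principle forces $g$ to be a nonzero constant, so $\pi=g\pi'$.

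The only step that requires any real care is the passage from "$\pi$ and $\pi'$ vanish to the same order along each component of $\D$" to the statement that $\pi/\pi'$ extends holomorphically without zeros across the whole of $\PP^2$. This is a standard consequence of Hartogs-type extension (components of $\D$ have codimension one and their smooth locus is dense in $\D$), combined with the elementary observation that at a generic smooth point of a component where both bivectors vanish to the same order $k$, in suitable local coordinates $\pi=u^k h\,\cvf{u}\wedge\cvf{v}$ and $\pi'=u^k h'\,\cvf{u}\wedge\cvf{v}$ with $h,h'$ nonvanishing, so $g=h/h'$ is locally holomorphic and nonvanishing. Once this is in hand the proposition follows immediately.
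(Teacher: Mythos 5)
Your proof is correct and follows essentially the same route as the paper: the chart computation showing the bracket is a cubic polynomial, homogenization to a degree-three form $F$, and the observation that two sections of the line bundle $\wedge^2\tshf{\PP^2}$ with the same zero divisor differ by a nonvanishing holomorphic function, which is constant on the compact connected $\PP^2$. The paper states the last step more tersely, but your added care about extending the quotient across $\D$ fills in exactly what the paper leaves implicit.
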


If $\pi$ is suitably generic, the curve $\D$ will be smooth.  We recall that, in this case, it must be an elliptic curve---a Riemann surface that is topologically a torus.  (More correctly, it is a smooth curve of genus one; typically one says that an elliptic curve is a genus one curve with a chosen base point, but we shall be loose about the distinction.)

More degenerate scenarios are possible, in which the curve becomes singular.  The full classification of all possible cubic curves is classical: up to projective equivalence, there are only nine possible behaviours, as illustrated in \autoref{fig:cubic}.

\begin{figure}[h]\centering
\begin{subfigure}{0.3\textwidth}\centering
\begin{tikzpicture}[scale=\cubicscale]
\def\mint{-3.5}
\def\maxt{3.5}
\def\a{1}
\def\b{2}
\clip[draw] (0,0) circle (4);
\draw[fill=lightgrey] (0,0) circle (4);
\def\re{(108*\x*\x-108*\b+12*sqrt(12*\a*\a*\a+81*\x*\x*\x*\x-162*\x*\x*\b+81*\b*\b))^(1/3)}

\def\x{\t}
\def\y{ (\re/6 - 2*\a/\re) } 
\def\tscale{0.1}
\def\tx{\tscale*(3*\y^2+\a)}
\def\ty{\tscale*(2*\x)}
\draw[ultra thick,red,domain=\mint:\maxt,smooth,variable=\t] plot ({\y},{\x});
\end{tikzpicture}
\caption{smooth (elliptic)}
\end{subfigure}
\begin{subfigure}{0.3\textwidth}\centering
\begin{tikzpicture}[scale=\cubicscale]
\clip[draw] (0,0) circle (4);
\draw[fill=lightgrey] (0,0) circle (4);
\draw[ultra thick,red,domain=-2:2,smooth,variable=\t] plot ({2*\t*\t-2},{2*\t*\t*\t-2*\t});
\end{tikzpicture}
\caption{node}
\end{subfigure}
\begin{subfigure}{0.3\textwidth}\centering
\begin{tikzpicture}[scale=\cubicscale]
\clip[draw] (0,0) circle (4);
\draw[fill=lightgrey] (0,0) circle (4);
\draw[ultra thick,red,domain=-2:2,smooth,variable=\t] plot ({\t*\t},{\t*\t*\t/2});
\end{tikzpicture}
\caption{cusp}
\end{subfigure}

\vspace{0.5cm}
\begin{subfigure}{0.3\textwidth}\centering
\begin{tikzpicture}[scale=\cubicscale]
\clip[draw] (0,0) circle (4);
\draw[fill=lightgrey] (0,0) circle (4);
\draw[ultra thick,color=red] (0,0) circle (2.5);
\draw[ultra thick,color=red] (-4,0) -- (4,0);
\end{tikzpicture}
\caption{conic and line}
\end{subfigure}
\begin{subfigure}{0.3\textwidth}\centering
\begin{tikzpicture}[scale=\cubicscale]
\clip[draw] (0,0) circle (4);
\draw[fill=lightgrey] (0,0) circle (4);
\draw[ultra thick,color=red] (0,0) circle (2.5);
\draw[ultra thick,color=red] (-4,2.5) -- (4,2.5);
\end{tikzpicture}
\caption{conic and tangent line}
\end{subfigure}
\begin{subfigure}{0.3\textwidth}\centering
\begin{tikzpicture}[scale=\cubicscale]
\clip[draw] (0,0) circle (4);
\draw[fill=lightgrey] (0,0) circle (4);
\draw[ultra thick,color=red] (-5,-1) -- (5,-1);
\draw[ultra thick,color=red] (3.4641,5) -- (-3.4641,-3);
\draw[ultra thick,color=red] (-3.4641,5) -- (3.4641,-3);
\end{tikzpicture}
\caption{triangle}
\end{subfigure}

\vspace{0.5cm}
\begin{subfigure}{0.3\textwidth}\centering
\begin{tikzpicture}[scale=\cubicscale]
\clip[draw] (0,0) circle (4);
\draw[fill=lightgrey] (0,0) circle (4);
\draw[ultra thick,color=red] (-5,0) -- (5,0);
\draw[ultra thick,color=red] (60:5) -- (240:5);
\draw[ultra thick,color=red] (120:5) -- (-60:5);
\end{tikzpicture}
\caption{lines through a point}
\end{subfigure}
\begin{subfigure}{0.3\textwidth}\centering
\begin{tikzpicture}[scale=\cubicscale]
\clip[draw] (0,0) circle (4);
\draw[fill=lightgrey] (0,0) circle (4);
\draw[ultra thick,color=red] (-5,0) -- (5,0);
\draw[dashed,thick, color=red] (-5,0.3) -- (5,0.3);
\draw[ultra thick,color=red] (0,-5) -- (0,5);
\end{tikzpicture}
\caption{line and a double line}
\end{subfigure}
\begin{subfigure}{0.3\textwidth}\centering
\begin{tikzpicture}[scale=\cubicscale]
\clip[draw] (0,0) circle (4);
\draw[fill=lightgrey] (0,0) circle (4);
\draw[ultra thick,color=red] (-5,0) -- (5,0);
\draw[dashed,thick, color=red] (-5,0.3) -- (5,0.3);
\draw[dashed,thick, color=red] (-5,0.6) -- (5,0.6);
\end{tikzpicture}
\caption{triple line}
\end{subfigure}

\caption{The various types of cubic curves in $\PP^2$.  Each curve determines a Poisson structure on $\PP^2$ up to rescaling by an overall constant.}
\label{fig:cubic}
\end{figure}
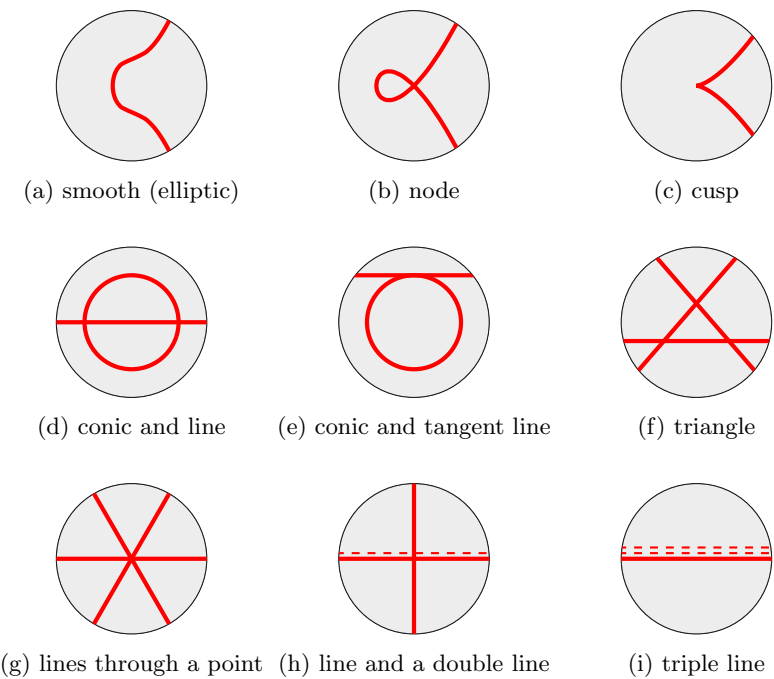
%
%
%
%
%

\subsection{Anticanonical divisors and adjunction}

Before we continue our discussion of Poisson surfaces, it will be useful to recall some standard algebro-geometric terminology and conventions concerning divisors.  We shall be brief, so we refer the reader to \cite[Chapter 1]{Griffiths1994} for a comprehensive treatment.

\subsubsection{Divisors}

If $\X$ is a complex manifold, a \defn{divisor on $\X$} is a formal $\ZZ$-linear combination
\[
\D = \sum_{i } k_i\D_i 
\]
of irreducible hypersurfaces $\D_i \subset \X$. It is assumed that this combination is locally finite, meaning that every point $p\in\X$ has a neighbourhood $\U$ such there are only finitely indices $i$ for which $\D_i \cap \U \ne \varnothing$ and $k_i \ne 0$.   In our examples, $\D$ will simply be a finite sum.

A divisor is \defn{effective} if each coefficient $k_i$ is nonnegative.  A typical example of an effective divisor is the zero locus of a holomorphic function with coefficients given by the multiplicities of vanishing.  More globally, the zero locus of a holomorphic section of a holomorphic line bundle defines an effective divisor, and in fact all effective divisors arise in this way.

An effective divisor $\D$ is \defn{smooth at $p \in \X$} if, near $p$, it can be defined as the zero locus of a single function $f$ whose derivative is nonzero:
\[
df|_p \ne 0 \in \T^*_p\X.
\]
In this case, the implicit function theorem implies that there is an open neighbourhood $\U$ of $p$ such that $\D \cap \U \subset \U$ is a connected complex submanifold of codimension one.  Moreover the function $f$ vanishes to order one on this submanifold.

Note the convention here: an effective divisor is never smooth if it has any irreducible components that are taken with multiplicity greater than one, even if the  underlying set of points is a submanifold.  This ensures, for example, that smoothness of a divisor is preserved by small deformations of its local defining equations.  In contrast, the equation
\[
x^2=0,
\]
defining a straight line with multiplicity two in coordinates $x,y$, can evidently be deformed to the equation
\[
x(x+\epsilon y)= 0,
\]
for $\epsilon \in \CC$.  For $\epsilon \ne 0$, this deformed divisor has a singular point where the two lines $x=0$ and $x+\epsilon y = 0$ meet.

\subsubsection{Anticanonical divisors}

 Recall that on any complex manifold, whatever the dimension, the \defn{canonical line bundle} is the top exterior power of the cotangent bundle:
\[
\can_\X  = \det \tshf{\X}^\vee = \wedge^{\dim \X}\tshf{\X}^\vee
\]
So the sections of $\can_\X$ are holomorphic differential forms of top degree.  The \defn{anticanonical bundle} $\acan_\X = \det \tshf{\X}$ is the dual of the canonical bundle.
\begin{definition}
A divisor  $\D$ on $\X$ that may be obtained as the zero locus of a section of $\acan_\X$ is called an \defn{(effective) anticanonical divisor}.
\end{definition}

In the particular case when $\X$ is a surface, we have that
\[
\acan_\X = \wedge^2\tshf{\X}
\]
so that a Poisson structure $\pi$ on $\X$ is simply a section of the anticanonical bundle.  In this way, we see that a Poisson structure on a compact surface $\X$ is determined up to constant rescaling by an anticanonical divisor on $\X$.  We can then rephrase \autoref{prop:p2cubic} as follows: an effective divisor on $\PP^2$ is an anticanonical divisor if and only if it is a cubic curve.  

\subsubsection{Adjunction on Poisson surfaces}
\label{sec:adjunction}

We saw in \autoref{sec:p2} that a smooth anticanonical divisor in $\PP^2$ is always an elliptic curve.  We shall now explain a geometric reason why this must be the case.  It is a special case of a general result, known as the \defn{adjunction formula}, which relates the canonical bundle of a hypersurface to the canonical bundle of the ambient manifold (see, e.g.~\cite[p.~146--147]{Griffiths1994}).

Let $(\X,\pi)$ be a Poisson surfaces, and let $\D = \Zeros(\pi)$ be the corresponding anticanonical divisor.  Suppose that $p$ is a point of $\D$.  Then $\pi$ has a well-defined derivative at $p$, giving an element
\[
d_p\pi \in \T^\vee_p\X \otimes \wedge^2 \T_p\X
\]
where $\T_p\X$ is the tangent space of $\X$ at $p$ and $\T^\vee_p\X$ is the cotangent space.  (This tensor is the one-jet of $\pi$ at $p$.)

Notice that there is a natural contraction map
\[
\T^\vee_p\X \otimes \wedge^2 \T_p\X \to \T_p\X,
\]
given by the interior product of covectors and bivectors.    Applying this contraction to $d_p\pi$ we obtain an element $Z_p \in \T_p\X$, and allowing $p$ to vary, we obtain a section
\[
Z \in \Gamma(\D,\tshf{\X}|_\D)
\]
If $x$ and $y$ are local coordinates on $\X$, then $\pi = f \cvf{x}\wedge\cvf{y}$ for a function $f$.  We then compute
\[
d\pi = df \otimes \cvf{x}\wedge\cvf{y}|_\D,
\]
so that
\[
Z = \rbrac{(\cvf{x}f)\cvf{y} - (\cvf{y}f)\cvf{x}}|_\D.
\]
From this expression, we immediately obtain the following
\begin{proposition}
The vector $Z_p$ is nonzero if and only if $\D$ is smooth at $p$.  In this case, it is tangent to $\D$, giving a basis for the one-dimensional subspace $\T_p\D \subset \T_p\X$.
\end{proposition}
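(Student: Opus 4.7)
The plan is to reduce to the local coordinate calculation that the proposition has already handed us, and then do a case analysis of the singularities of $\D$ via the irreducible factorization of $f$. Working in local coordinates $x,y$ with $\pi = f\,\cvf{x}\wedge\cvf{y}$, the displayed formula
\[
Z = \bigl((\cvf{x}f)\cvf{y} - (\cvf{y}f)\cvf{x}\bigr)\big|_\D
\]
immediately shows that $Z_p \ne 0$ if and only if $df|_p \ne 0$. So the nontrivial content of the first assertion is the equivalence between $df|_p \ne 0$ and smoothness of $\D$ at $p$.

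For that equivalence, I would factor $f = f_1^{k_1}\cdots f_n^{k_n}$ into irreducibles in a neighbourhood of $p$, so that by Leibniz
\[
df = \sum_i k_i f_i^{k_i-1}\, df_i \prod_{j\ne i} f_j^{k_j}.
\]
If $\D$ is smooth at $p$, then by the convention recalled earlier exactly one $f_i$ (say $f_1$) vanishes at $p$, with $k_1 = 1$ and $df_1|_p \ne 0$, so $df|_p = \bigl(\prod_{j\ne 1} f_j(p)^{k_j}\bigr) df_1|_p \ne 0$ because the remaining $f_j(p)$ are units. For the converse, $\D$ can fail to be smooth at $p$ in one of three ways: some component through $p$ has multiplicity $k_i \ge 2$; two or more components meet at $p$; or a single reduced component through $p$ has $df_i|_p = 0$. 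In each case, every summand in the Leibniz expansion of $df|_p$ contains at least one vanishing factor, so $df|_p = 0$. This part will be the main bookkeeping step but is routine; the only real subtlety is keeping careful track of the paper's multiplicity convention.

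Finally, for tangency, observe the global identity
\[
Z(f) = (\cvf{x}f)(\cvf{y}f) - (\cvf{y}f)(\cvf{x}f) = 0,
\]
which holds before restriction to $\D$. Hence at any smooth point $p$ of $\D$, where $f$ is (up to a local unit) a defining equation for $\D$, the derivation $Z$ annihilates the ideal of $\D$, i.e.\ $Z_p \in \T_p\D$. Since $\T_p\D$ is one-dimensional and $Z_p \ne 0$ by the first part, $Z_p$ is a basis. A small additional remark I would insert is that although $f$ itself is coordinate-dependent (it transforms by a Jacobian factor), on the locus $\{f=0\}$ the resulting ambiguity in $df$ is killed by the vanishing of $f$, so $Z$ is a well-defined section of $\tshf{\X}|_\D$; this is what makes the local computation sensible as a global statement.
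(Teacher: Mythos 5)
Your proposal is correct and follows essentially the same route as the paper: the local formula for $Z$ gives $Z_p \ne 0 \iff df|_p \ne 0$, and $Z_p(f)=0$ gives tangency. The only difference is that you expand, via the irreducible factorization of $f$ and the Leibniz rule, the equivalence between $df|_p \ne 0$ and smoothness of the divisor, which the paper treats as immediate from its definition of a smooth point of an effective divisor.
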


\begin{proof}
The point $p\in\D$ is smooth if and only if $df|_p$ is nonzero, where $f$ is a local defining equation for $\D$.  It is evident from the local calculation above that this is equivalent to the condition $Z_p \ne 0$.  It is also easy to see that $Z_p(f) = 0$, but this is exactly the condition for $Z_p$ to be tangent to $\D$.
\end{proof}

Using the fact that elliptic curves are the only compact complex curves whose tangent bundles are trivial, we obtain the following consequence.
\begin{corollary}
If $\D$ is smooth, then the derivative of the Poisson structure induces a nonvanishing vector field
\[
Z \in \Gamma(\D,\tshf{\D})
\]
In particular, if $\D$ is smooth and compact, then every connected component of $\D$ is an elliptic curve.
\end{corollary}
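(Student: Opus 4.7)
The first assertion is immediate from the preceding proposition. Since $\D$ is smooth, the proposition gives $Z_p \neq 0$ and $Z_p \in \T_p\D$ for every $p \in \D$. Allowing $p$ to vary therefore upgrades $Z$ from a section of $\tshf{\X}|_\D$ to a nowhere-vanishing section of the subbundle $\tshf{\D} \subset \tshf{\X}|_\D$. The only thing to justify is that $Z$ is holomorphic, which is clear from the local formula $Z = (\cvf{x}f)\cvf{y} - (\cvf{y}f)\cvf{x}$ given just before the proposition, since $f$ is holomorphic.

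For the second assertion, let $\D' \subset \D$ be a connected component. Then $\D'$ is a compact Riemann surface, and $Z|_{\D'}$ is a nowhere-vanishing holomorphic vector field on it, i.e.\ a nowhere-vanishing holomorphic section of the line bundle $\tshf{\D'}$. The plan is then to invoke the classical fact that on a compact Riemann surface of genus $g$, the tangent bundle has degree $2 - 2g$, so the existence of such a section forces the degree to be zero and hence $g = 1$. Equivalently, one can phrase this topologically: a nonvanishing continuous vector field on a compact oriented surface forces the Euler characteristic to vanish (Poincar\'e--Hopf), and the only compact Riemann surface with $\chi = 0$ is a torus, which carries a unique complex structure up to isomorphism type: that of an elliptic curve.

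There is no serious obstacle here; the argument is essentially a direct application of the preceding proposition combined with a standard curve-theory fact. The only choice to make is which justification to give for the genus-one conclusion, and the cleanest is the degree computation: a holomorphic line bundle on a compact Riemann surface admitting a nowhere-vanishing section must be trivial, so in particular $\deg \tshf{\D'} = 0$, whence $g(\D') = 1$ and $\D'$ is an elliptic curve.
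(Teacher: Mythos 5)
Your proposal is correct and follows the same route as the paper: the paper deduces the corollary directly from the preceding proposition together with the standard fact that elliptic curves are the only compact complex curves with trivial tangent bundle, which is exactly what you do (you merely add the routine degree/Euler-characteristic justification of that fact). No gaps.
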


\begin{definition}
The vector field $Z \in \Gamma(\D,\tshf{\D})$ is called the \defn{modular residue} of the Poisson structure $\pi$.
\end{definition}

\begin{remark}
When $\D$ is singular, we can still make sense of vector fields on $\D$ as  derivations of functions (see  \autoref{sec:subspaces}).  In this way one can make sense of the modular residue even at the singular points. 
\end{remark}

\subsection{Poisson structures on ruled surfaces}
\label{sec:ruled}

We now describe some more examples of compact complex surfaces.  We recall that a surface $\X$ is \defn{(geometrically) ruled} if it is the total space of a $\PP^1$-bundle over a smooth compact curve $\Y$.  This is equivalent to saying that $\X = \PP(\cE)$ is the projectivization of a rank-two holomorphic vector bundle $\cE$ on $\Y$.  Not every ruled surface carries a Poisson structure, but there are several that do.  In this section, we will describe their classification.

 As is standard in algebraic geometry, we make no notational distinction between a holomorphic vector bundle and its locally free sheaf of holomorphic sections.  Thus, for example,  $\cO_\Y$ denotes both the sheaf of holomorphic functions, and the trivial line bundle $\Y \times \CC \to \Y$.

\subsubsection{Compactified cotangent bundles}\label{sec:cotan-compact}

The cotangent bundle of any smooth curve is a symplectic surface.  It can be compactified to obtain a Poisson ruled surface in several ways, which we now describe.

\paragraph{Simplest version:} To begin, suppose that $\Y$ is a smooth compact curve.  Then we may compactify the cotangent bundle by adding a point at infinity in each fibre.  More precisely, we consider the rank-two bundle $\cE = \cotshf{\Y} \oplus \cO_\Y$. Its projectivization $\X = \PP(\cotshf{\Y} \oplus \cO_\Y)$ has an open dense subset isomorphic to $\cotshf{\Y}$ given by the embedding
\[
\mapdef{i}{\cotshf{\Y}}{\PP(\cotshf{\Y} \oplus \cO_\Y) }
{\alpha}{\mathrm{span}\,(\alpha,1).}
\]
Its complement is the locus
\[
\bS = \{[*:0] \in \PP(\cotshf{\Y} \oplus \cO_\Y)\},
\]
obtained by taking the point at infinity in each $\PP^1$-fibre.  Thus $\bS$  projects isomorphically to $\Y$, giving a section of the $\PP^1$-bundle---the ``section at infinity''.

Using local coordinates, it is easy to see that the Poisson structure on $\cotshf{\Y}$ extends to all of $\X$.  Indeed, if $y$ is a local coordinate on $\Y$ and $x = \cvf{y}$ is the corresponding coordinate on the fibres of $\cotshf{\Y}$, then the bivector has the standard Darboux form $\pi = \cvf{x}\wedge \cvf{y}$.  Switching to the coordinate $z = x^{-1}$ at infinity in the $\PP^1$ fibres, we find
\[
\pi = -z^{2}\cvf{z}\wedge\cvf{y}.
\] 
Thus $\pi$ gives a Poisson structure on $\X$ whose anticanonical divisor is given by
\[
\Zeros(\pi) = 2 \bS \subset \X,
\]
the section at infinity taken with multiplicity two.

\paragraph{Twisting by a divisor:} We can modify the previous construction by introducing a nontrivial effective divisor $\D$ on the compact curve $\Y$, i.e.~a collection of points in $\Y$ with positive multiplicities.  This modification yields a compactification of the cotangent bundle of the punctured curve $\U = \Y \setminus \D$, as follows.

Denote by $\cotshf{\Y}(\D)$ the line bundle on $\Y$ whose sections are the meromorphic one-forms with poles bounded by $\D$.  More precisely, on the punctured curve $\U$, we have
\[
\cotshf{\Y}(\D)|_{\U} \cong \cotshf{\U},
\]
but if  $p \in \D$ is a point of multiplicity $k \in \ZZ_{>0}$, and $z$ is a coordinate centred at $p$, then we have a basis
\[
\cotshf{\Y}(\D) \cong \abrac{ \frac{dz}{z^{k}} }
\]
in a neighbourhood of $p$.  The symplectic structure on $\cotshf{\Y\setminus \D}$ then extends to a Poisson structure $\pi$ on the ruled surface
\[
\X = \PP(\cotshf{\Y}(\D) \oplus \cO_\Y)
\]
that vanishes both at infinity, and on the fibres over $\D$.  More precisely, the anticanonical divisor has the form
\[
\Zeros(\pi) = \sum_{p \in \D} k_p\F_p + 2\bS
\]
where $\F_p  \subset \X$ is the fibre over $p \in \Y$ as shown in \autoref{fig:cotan-compact}.

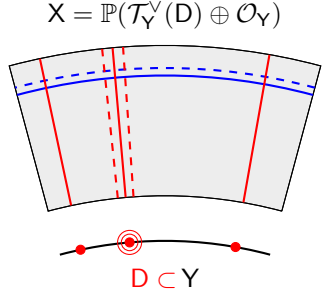
\begin{figure}[h]
\begin{center}
\begin{tikzpicture}[scale=2]
\def\t{15}
\def\r{3}
\def\R{4}
\def\s{3.8}
\def\c{2.7}
\def\PA{80}
\def\PB{95}
\def\PC{102}
\draw[thick,black] (90-\t:\c) arc (90-\t:90+\t:\c);
\draw (90:\c-0.25) node {$\textcolor{red}{\D\subset}\,\Y$};
\draw (90:\R+0.2) node {$\X = \PP(\cotshf{\Y}(\D)\oplus\cO_\Y)$};
\draw[red,fill] (\PA:\c) circle (0.03);
\draw[red,fill] (\PB:\c) circle (0.03);
\draw[red] (\PB:\c) circle (0.055);
\draw[red] (\PB:\c) circle (0.08);
\draw[red,fill] (\PC:\c) circle (0.03);
\draw[fill=lightgrey] (90-\t:\r) arc (90-\t:90+\t:\r) -- (90+\t:\R) arc (90+\t:90-\t:\R) -- (90-\t:\r);
\draw[clip] (90-\t:\r) arc (90-\t:90+\t:\r) -- (90+\t:\R) arc (90+\t:90-\t:\R) -- (90-\t:\r);
\draw[thick,blue] (90-\t:\s) arc (90-\t:90+\t:\s);
\draw[thick,dashed,blue] (90-\t:{\s+0.05}) arc (90-\t:90+\t:{\s+0.05});
\draw[thick,red] (\PA:\r) -- (\PA:\R);
\draw[thick,red] (\PB:\r) -- (\PB:\R);
\draw[thick,dashed,red] (\PB-1:\r) -- (\PB-1:\R);
\draw[thick,dashed,red] (\PB+1:\r) -- (\PB+1:\R);
\draw[thick,red] (\PC:\r) -- (\PC:\R);
\end{tikzpicture}
\end{center}
\caption{The Poisson structure on a compactified cotangent bundle vanishes to order two on the section at infinity (shown in blue) and to prescribed multiplicity on the fibres over a divisor in the base curve (shown in red).}\label{fig:cotan-compact}
\end{figure}

\begin{exercise}
Verify this description of the anticanonical divisor.  Explain what goes wrong if we try to replace $\cotshf{\Y}(\D)$  with the bundle $\cotshf{\Y}(-\D)$ of forms that vanish on $\D$ instead of having poles. \qed
\end{exercise}

\paragraph{Jet bundles:}\label{sec:jet-p1}
Another variant is obtained by working with extensions of line bundles, instead of direct sums.  Suppose that $\cE$ is a rank-two bundle that sits in an exact sequence
\[
\xymatrix{
0 \ar[r] & \cotshf{\Y} \ar[r]& \cE  \ar[r] &  \cO_\Y \ar[r] & 0
}
\] 
but does \emph{not} split holomorphically as a direct sum: $\cE \neq \cotshf{\Y} \oplus \cO_\Y$.  In fact, there is only one such bundle $\cE$ on $\Y$, up to isomorphism; it can be realized explicitly as
\[
\cE = \cJ^1\sL \otimes \sL^{\vee}
\]
where $\sL$ is any line bundle of nonzero degree and $\cJ^1\sL$ is its bundle of one-jets.  (In other words, $\cE$ is the dual of the Atiyah algebroid of $\sL$.)  The uniqueness follows from the description of extensions of vector bundles in terms of Dolbeault cohomology (see \cite[Section 5.4]{Griffiths1994}).  In this case, extensions of $\cO_\Y$ by $\cotshf{\Y}$ are parametrized by $\cohlgy[1]{\Y,\cotshf{\Y}}  \cong \cohlgy[2]{\Y,\CC}$, which is one-dimensional.

Once again, the projectivization $\X = \PP(\cE)$ has $\cotshf{\Y}$ as an open dense set, corresponding to the vectors in $\cE$ that project to $1 \in \cO_\Y$, and we obtain a Poisson structure on $\X$ that vanishes to order two on the section at infinity.

One could try to generalize this construction by introducing a nonempty divisor $\D$ on $\Y$ and considering extensions
\[
\xymatrix{
0\ar[r] & \cotshf{\Y}(\D) \ar[r] & \cE \ar[r] & \cO_\Y \ar[r] & 0.
}
\]
However, such extensions are parametrized by the vector space $\cohlgy[1]{\Y,\cotshf{\Y}(\D)}$, which by Serre duality is dual to $\cohlgy[0]{\Y,\cO_\Y(-\D)}$, the space of global holomorphic functions on $\Y$ that vanish on $\D$.  Since every global holomorphic function on $\Y$ is constant, this vector space is trivial.  We conclude that the extension is also trivial, i.e.~we have a splitting $\cE \cong \cotshf{\Y}(\D) \oplus \cO_\Y$ as previously considered, so the construction does not produce anything new.

\subsubsection{Relationship with co-Higgs fields}
\label{sec:coHiggs}
Let us now consider a general ruled surface $\X = \PP(\cE)$ over $\Y$. Let $\rho : \X \to \Y$ be the bundle projection.  There is an exact sequence
\[
\xymatrix{
0 \ar[r] & \tshf{\X/\Y} \ar[r] & \tshf{\X} \ar[r]& \rho^*\tshf{\Y} \ar[r] & 0,
}
\]
and taking determinants we find
\begin{align}
\acan_\X \cong \wedge^2\tshf{\X} \cong \rho^*\tshf{\Y} \otimes \tshf{\X/\Y}. \label{eqn:ruleddet}
\end{align}
In particular, if $f \in \cO_\Y$, then the Hamiltonian vector field of $\rho^*f$ is tangent to the fibres of $\rho$.  Thus, for each $p \in \X$, we obtain a linear map
\[
\T^\vee_p\Y \to \Gamma(\F_p,\tshf{\F_p})
\]
sending a covector at $p$ to the corresponding vector field on the fibre $\F_p$. Evidently, this linear map completely determines $\pi$ along $\F_p$.

Now since $\F_p \cong \PP(\cE_p)$ is a copy of $\PP^1$, the space $\Gamma(\F_p,\tshf{\F_p})$ of vector fields on the fibre is three-dimensional.  (In an affine coordinate $z$ on the fibre, the vector fields $\cvf{z},z\cvf{z}$ and $z^2\cvf{z}$ give a basis.)  Hence as $p$ varies, these spaces assemble into a rank-three vector bundle over $\Y$---the so-called \defn{direct image} $\rho_*\tshf{\X/\Y}$.  In this way, we see that a Poisson structure on the surface $\X$ is equivalent to a vector map bundle map $\cotshf{\Y} \to \rho_*\tshf{\X/\Y}$ on the curve $\Y$.

This is a bit abstract, but fortunately the bundle $\rho_*\tshf{\X/\Y}$ has a more concrete description.  Indeed, if we think of endomorphisms of $\cE$ as infinitesimal symmetries, then we get an identification
\[
\rho_*\tshf{\X/\Y} \cong \cEnd_0(\cE)
\]
where $\cEnd_0(\cE)$ is the bundle of traceless endomorphisms of $\cE$.  The zeros of a vector field are identified with the points in $\PP(\cE)$ determined by the eigenspaces of the corresponding endomorphism.  

We therefore arrive at the following
\begin{theorem}\label{thm:p1bundle}
Let $\X = \PP(\cE)$ be the projectivization of a rank-two bundle $\cE$ over a smooth curve $\Y$.  Then we have a canonical isomorphism
\[
\Gamma(\X,\wedge^2\tshf{\X}) \cong \Gamma(\Y,\tshf{\Y}\otimes\cEnd_0(\cE)),
\]
so that Poisson structures on $\X$ are in canonical bijection with bundle maps $\cotshf{\Y} \to \cEnd_0(\cE)$ on $\Y$.
\end{theorem}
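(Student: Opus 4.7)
The plan is to interpret both sides of the claimed isomorphism as global sections of sheaves on $\Y$, using the exact sequence of the fibration and the projection formula, and then invoke the relative Euler sequence to identify the pushforward of the relative tangent sheaf with $\cEnd_0(\cE)$. Integrability is automatic, since $\wedge^3\tshf{\X} = 0$ on a surface, so a Poisson structure on $\X$ is just the same data as a global bivector.

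First I would rewrite the determinant identity \eqref{eqn:ruleddet} as a sheaf isomorphism
\[
\wedge^2\tshf{\X} \cong \rho^*\tshf{\Y}\otimes\tshf{\X/\Y}
\]
and take global sections. Since $\tshf{\Y}$ is locally free on $\Y$, the projection formula gives
\[
\rho_*\bigl(\rho^*\tshf{\Y}\otimes\tshf{\X/\Y}\bigr) \;\cong\; \tshf{\Y}\otimes \rho_*\tshf{\X/\Y},
\]
and pushing forward preserves global sections, so
\[
\Gamma(\X,\wedge^2\tshf{\X}) \;\cong\; \Gamma(\Y,\tshf{\Y}\otimes \rho_*\tshf{\X/\Y}).
\]
This reduces the theorem to the sheaf-level identification $\rho_*\tshf{\X/\Y} \cong \cEnd_0(\cE)$ on $\Y$.

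Next I would establish that identification via the relative Euler sequence on $\PP(\cE) \to \Y$. Writing $\cO_\X(1)$ for the relative hyperplane bundle, there is a short exact sequence
\[
0 \to \cO_\X \to \rho^*\cE \otimes \cO_\X(1) \to \tshf{\X/\Y} \to 0
\]
(or its dual version, depending on convention). Applying $\rho_*$ and using $\rho_*\cO_\X(1) \cong \cE^\vee$ together with $R^1\rho_*\cO_\X = 0$, the projection formula yields
\[
0 \to \cO_\Y \to \cE \otimes \cE^\vee \to \rho_*\tshf{\X/\Y} \to 0,
\]
where the first map is the coevaluation $1 \mapsto \mathrm{id}$. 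The quotient is precisely the sheaf $\cEnd_0(\cE) = \cEnd(\cE)/\cO_\Y\cdot\mathrm{id}$ of traceless endomorphisms, so $\rho_*\tshf{\X/\Y} \cong \cEnd_0(\cE)$. Combining with the previous step gives
\[
\Gamma(\X,\wedge^2\tshf{\X}) \;\cong\; \Gamma(\Y,\tshf{\Y}\otimes\cEnd_0(\cE)),
\]
and such a section is equivalently a bundle map $\cotshf{\Y}\to\cEnd_0(\cE)$.

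The main technical obstacle is the Euler-sequence identification and keeping track of conventions (the twist by $\cO_\X(1)$ versus $\cO_\X(-1)$, and whether $\rho_*\cO_\X(1)$ equals $\cE$ or $\cE^\vee$). Once this is done carefully, the geometric interpretation of the zeros follows: at a point $p \in \Y$, the resulting traceless endomorphism $\Phi_p$ of $\cE_p$ encodes the Hamiltonian flow generated by $\rho^*\cotshf{\Y,p}$ on the fibre $\F_p = \PP(\cE_p)$, and the zeros of this fibrewise vector field on $\F_p$ are exactly the lines in $\cE_p$ preserved by $\Phi_p$, i.e. its eigenlines. No integrability condition intervenes, because $[\pi,\pi]$ lives in $\Gamma(\X,\wedge^3\tshf{\X}) = 0$.
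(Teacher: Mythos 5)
Your proposal is correct and follows essentially the same route as the paper: take determinants of the relative tangent sequence to get $\wedge^2\tshf{\X} \cong \rho^*\tshf{\Y}\otimes\tshf{\X/\Y}$, push forward to $\Y$, and identify $\rho_*\tshf{\X/\Y}$ with $\cEnd_0(\cE)$. The only difference is that you derive this last identification rigorously from the relative Euler sequence (with the correct conventions), whereas the paper obtains it more informally by viewing traceless endomorphisms of $\cE$ as infinitesimal symmetries of $\PP(\cE)$ acting on the fibres; these are two presentations of the same isomorphism.
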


\begin{example}\label{ex:split-ell}
Suppose that $\Y$ is a smooth curve of genus one, i.e.~an elliptic curve, and let $Z \in \Gamma(\tshf{\Y})$ be a nonzero vector field on $\Y$.  Let $\sL$ be a line bundle on $\Y$, and consider the rank-two bundle $\cE = \cO_\Y \oplus \sL$.  Let $\phi_0$ be  the endomorphism of $\cE$ that acts by $+1$ on $\cO_\Y$ and by $-1$ on $\sL$, so that $\phi_0$ is traceless.  Therefore the section $\phi = Z \otimes \phi_0 \in \Gamma(\Y,\tshf{\Y}\otimes \cEnd_0(\cE))$
defines a Poisson structure $\pi$ on $\X = \PP(\cE)$.  Since $\cO_\Y$ and $\sL$ are the eigenspaces of  $\phi_0$, the corresponding sections $\bS_0,\bS_1 \subset \X$ give the zero locus of the Poisson structure, i.e. $\Zeros(\pi) = \bS_0 + \bS_1 \subset \X$ is the union of two disjoint copies of $\Y$. \qed
\end{example}

\begin{exercise}
Let $\rho : \X = \PP(\cE) \to \Y$ be a ruled surface, and let $\pi$ be the Poisson structure on $\X$ corresponding to a section $\phi \in \Gamma(\Y,\tshf{\Y}\otimes\cEnd_0(\cE))$.  Let $\D \subset \Y$ be the divisor of zeros of $\phi$.  By considering the relation between the zeros of $\pi$ and the eigenspaces of $\phi$, show that
\[
\Zeros(\pi) = \B + \pi^{-1}(\D)
\]
where $\B \subset \X$ is a divisor that meets each fibre of $\rho$ in a pair of points, counted with multiplicity. \qed
\end{exercise}

As a brief digression from surfaces, let us remark that this construction of Poisson structures on $\PP^1$-bundles can be extended to the higher dimensional setting; see \cite[Section 6]{Polishchuk1997}  for details.  In short, the data one needs are a Poisson structure on the base $\Y$ and a ``Poisson module'' structure on the bundle $\cE$.  The latter is essentially an action of the Lie algebra $(\cO_\Y,\{\cdot,\cdot\})$ on the sections of $\cE$ by Hamiltonian derivations.  When the Poisson structure on $\Y$ is identically zero, it boils down to the following construction:

\begin{exercise}\label{ex:higher-cohiggs}
Let $\Y$ be a manifold, and let $\cE$ be a rank-two bundle on $\Y$.  Arguing as above, we see that a section $\phi \in \Gamma(\Y,\tshf{\Y} \otimes \cEnd_0(\cE))$ defines a bivector field $\pi$ on $\X = \PP(\Y)$, but the integrability condition $[\pi,\pi]=0$ is not automatic if $\dim \Y > 1$.  Show that $\pi$ is integrable if and only if $\phi$ is a \defn{co-Higgs field}~\cite{Hitchin2011,Rayan2011}, meaning that
\[
[\phi \wedge \phi] = 0 \in \wedge^2\tshf{\Y} \otimes \cEnd(\cE),
\]
where $[-\wedge-]$ combines the wedge product $\tshf{\Y} \times \tshf{\Y} \to \wedge^2 \tshf{\Y}$ and the commutator $[-,-]:\cEnd(\cE) \times \cEnd(\cE) \to \cEnd(\cE)$. \qed
\end{exercise}

\subsubsection{Classification of ruled Poisson surfaces}

We are now in a position to state the classification of Poisson structures on ruled surfaces:

\begin{theorem}[Bartocci--Macr{\'{\i}}~\cite{Bartocci2005}, Ingalls~\cite{Ingalls1998}]
If $(\X,\pi)$ is a compact Poisson surface ruled over a curve $\Y$ of genus $g$, then it falls into one of the following classes:
\begin{itemize}
\item $g$ is arbitrary and $(\X,\pi)$ is a compactified cotangent bundle as in \autoref{sec:cotan-compact}
\item $g = 1$, and $(\X,\pi)$ comes from the construction in \autoref{ex:split-ell}.
\item $g = 0$ and $\X = \PP(\cO_{\PP^1} \oplus \cO_\PP^1(k))$ with $k \ge 0$.  See \cite[Lemma 7.11]{Ingalls1998} for a description of the possible anti-canonical divisors in this case.
\end{itemize}
\end{theorem}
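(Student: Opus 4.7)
The plan is to apply \autoref{thm:p1bundle}, which identifies Poisson structures on $\X = \PP(\cE)$ with global sections $\phi \in \Gamma(\Y, \tshf{\Y} \otimes \cEnd_0(\cE))$.  Since $\phi$ is fibrewise traceless, its essential invariant is $\det \phi \in \Gamma(\Y, \tshf{\Y}^{\otimes 2})$, and the classification falls out of a dichotomy according to whether $\det \phi$ vanishes identically.

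In the \emph{nilpotent case} $\det \phi \equiv 0$, the kernel of $\phi$ is one-dimensional at a generic point, and because line subbundles of $\cE$ correspond to sections of $\PP(\cE) \to \Y$, this kernel extends uniquely to a line subbundle $\sL \subset \cE$ across the zeros of $\phi$.  Since $\PP(\cE)$ is invariant under twisting $\cE$ by a line bundle, I would normalise so that $\cE/\sL \cong \cO_\Y$; the induced map $\bar\phi : \cO_\Y \to \sL \otimes \tshf{\Y}$ is then a nonzero global section, whose zero divisor is some effective $\D \subset \Y$, giving $\sL \cong \cotshf{\Y}(\D)$.  Extensions of $\cO_\Y$ by $\cotshf{\Y}(\D)$ are parametrised by $\cohlgy[1]{\Y, \cotshf{\Y}(\D)}$, which by Serre duality is dual to $\cohlgy[0]{\Y, \cO_\Y(-\D)}$; this vanishes for $\D \neq 0$ (forcing the split extension), and is one-dimensional for $\D = 0$ (accounting for both the split construction and the jet bundle of \autoref{sec:jet-p1}).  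In every case $(\X,\pi)$ is a compactified cotangent bundle.

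In the \emph{generically semisimple case} $\det \phi \not\equiv 0$, the existence of a nonzero section of $\tshf{\Y}^{\otimes 2}$ (a line bundle of degree $4 - 4g$) forces $g \leq 1$.  When $g = 1$, the bundle $\tshf{\Y}$ is trivial, $\det \phi$ is a nonzero constant multiple of $Z^{\otimes 2}$ for a trivialising vector field $Z$, and $\phi$ therefore has two distinct eigenvalues at every point.  Their eigenspaces glue to a splitting $\cE = \sL_1 \oplus \sL_2$ on which $\phi$ acts by $\pm Z$, matching \autoref{ex:split-ell}.  When $g = 0$, Grothendieck's theorem forces $\cE$ to split, and after twisting we may write $\cE \cong \cO_{\PP^1} \oplus \cO_{\PP^1}(k)$ with $k \geq 0$; the remaining problem of enumerating the pairs $(\cE, \phi)$ modulo $\Aut(\cE)$ is a finite calculation, carried out in \cite[Lemma 7.11]{Ingalls1998}.

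The main obstacle lies in the nilpotent case: one must verify that the fibrewise kernel extends to a genuine line subbundle across the zero locus of $\phi$ (rather than only to a coherent subsheaf), and, more delicately, that every isomorphism class of extension $0 \to \cotshf{\Y}(\D) \to \cE \to \cO_\Y \to 0$ is realised by one of the compactifications in \autoref{sec:cotan-compact}.  The Serre duality vanishing above is the crucial input for the latter, since it is what collapses the seemingly distinct jet-bundle family into the unique exceptional extension at $\D = 0$, and rules out any further exotic compactifications when $\D \ne 0$.
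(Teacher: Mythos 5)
Your argument is correct and follows essentially the same route as the paper's: the paper proves only the $g \ge 2$ corollary, via exactly your nilpotent-case analysis ($\det \phi$ as a section of $\tshf{\Y}^{\otimes 2}$, the kernel line subbundle, the normalisation $\cE/\sL \cong \cO_\Y$, and the Serre-duality argument that collapses the extensions onto the compactified cotangent bundles of \autoref{sec:cotan-compact}), and defers the remaining cases to \cite{Bartocci2005,Ingalls1998}. Your completion of the generically semisimple case --- the degree count forcing $g \le 1$, the global eigenspace splitting for $g=1$ recovering \autoref{ex:split-ell}, and Grothendieck's theorem for $g=0$ --- is the natural continuation and is sound.
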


We shall not give the full proof here, but let us give an idea of why it is true by explaining one of its corollaries
\begin{corollary}
A Poisson surface $(\X,\pi)$ ruled over a curve $\Y$ of genus at least two must be a compactified cotangent bundle.
\end{corollary}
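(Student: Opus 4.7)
The plan is to apply \autoref{thm:p1bundle} to encode the Poisson structure $\pi$ on $\X = \PP(\cE)$ as a section $\phi \in \Gamma(\Y,\tshf{\Y}\otimes\cEnd_0(\cE))$, and then to exploit the negativity of $\tshf{\Y}$ to force $\phi$ into a very rigid form.  Viewing $\phi$ as a twisted endomorphism $\cE \to \cE \otimes \tshf{\Y}$, its trace is automatically zero, while its determinant is a section of $\tshf{\Y}^{\otimes 2}$.  Since $g \ge 2$, this line bundle has degree $4-4g < 0$, so $\det \phi \equiv 0$.  Hence $\phi_p$ is nilpotent at every point $p \in \Y$, and (assuming $\pi \not\equiv 0$) nonzero on a dense open subset.

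Let $\D \subset \Y$ be the effective divisor of zeros of $\phi$.  Dividing out by a local equation for $\D$ promotes $\phi$ to a nowhere-vanishing, pointwise nilpotent bundle map $\tilde\phi \colon \cotshf{\Y}(\D) \to \cEnd_0(\cE)$.  I set $\sL := \ker \tilde\phi \subset \cE$, a line subbundle.  Because the image of a nonzero nilpotent $2 \times 2$ matrix coincides with its kernel, the induced surjection $\cE/\sL \twoheadrightarrow \sL \otimes \tshf{\Y}(-\D)$ is nowhere zero, hence an isomorphism of line bundles on $\Y$.  Writing $\mathcal{M} = \cE/\sL$, we obtain an extension
\[
0 \to \mathcal{M}\otimes\cotshf{\Y}(\D) \to \cE \to \mathcal{M} \to 0,
\]
and since twisting $\cE$ by any line bundle alters neither $\X = \PP(\cE)$ nor $\pi$, I would replace $\cE$ by $\cE' := \cE \otimes \mathcal{M}^\vee$ to land in the canonical form
\[
0 \to \cotshf{\Y}(\D) \to \cE' \to \cO_\Y \to 0
\]
considered in \autoref{sec:cotan-compact}.

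Finally, I would classify this extension by Serre duality exactly as in \autoref{sec:jet-p1}: $\cohlgy[1]{\Y,\cotshf{\Y}(\D)} \cong \cohlgy[0]{\Y,\cO_\Y(-\D)}^*$ vanishes when $\D \ne 0$, forcing $\cE' \cong \cotshf{\Y}(\D) \oplus \cO_\Y$ and placing $(\X,\pi)$ in the ``twisting by a divisor'' family; when $\D = 0$, the group is one-dimensional, so $\cE'$ is either $\cotshf{\Y} \oplus \cO_\Y$ or the jet extension $\cJ^1\sL \otimes \sL^\vee$.  All three scenarios appear explicitly in \autoref{sec:cotan-compact}.  The step I expect to require the most care is verifying that the Poisson structure reconstructed from $\tilde\phi$ agrees with the one coming from the cotangent construction rather than merely living on the same underlying ruled surface: one must trace through \autoref{thm:p1bundle} and check that the section $\bS := \PP(\sL) \subset \X$ (where nilpotency of $\tilde\phi$ concentrates the kernel) is exactly the section at infinity on which $\pi$ vanishes to order two, while the fibres over $\D$ supply the remaining component of $\Zeros(\pi)$, reproducing the divisor $2\bS + \sum_{p \in \D} k_p \F_p$ prescribed in \autoref{sec:cotan-compact}.
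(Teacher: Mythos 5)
Your proposal is correct and follows essentially the same route as the paper: use \autoref{thm:p1bundle} to encode $\pi$ as a co-Higgs field $\phi$, kill $\det\phi$ by the negativity of $\tshf{\Y}^{\otimes 2}$ to force nilpotency, extract the kernel line subbundle, and normalize the resulting extension of $\cO_\Y$ by $\cotshf{\Y}(\D)$. You have simply made explicit the final splitting argument via Serre duality (and the residual jet-bundle case when $\D=0$) that the paper compresses into ``from which the statement follows easily.''
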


\begin{proof}
Let $\X = \PP(\cE)$ for a rank-two bundle on $\Y$.  By \autoref{thm:p1bundle}, the Poisson structure corresponds to a section $\phi \in \tshf{\Y}\otimes \cEnd_0(\cE)$.  Let us view $\phi$ as a map
\[
\phi : \cE \to \cE \otimes \tshf{\Y}
\]
Its determinant gives a map
\[
\det \phi : \det \cE \to \det( \cE  \otimes \tshf{\Y}) \cong \det \cE \otimes (\tshf{\Y})^{\otimes 2}
\]
or equivalently a section
\[
\det \phi \in \Gamma(\Y,(\tshf{\Y})^{\otimes 2}).
\]
But for a curve $\Y$ of genus at least two, the bundle $(\tshf{\Y})^{\otimes 2}$ has negative degree, and hence it admits no nonzero sections.  Therefore $\det \phi = 0$ identically, and since $\phi$ is also traceless, it must be nilpotent.

 Now let $\sL \subset \cE$ be the kernel of $\phi$; it is a line subbundle.  (This is obvious away from the zeros of $\phi$, but one can check that it extends uniquely over the zeros as well.)  We have an exact sequence
\begin{align}
\xymatrix{
0 \ar[r] & \sL \ar[r] & \cE \ar[r] & \cE/\sL \ar[r] & 0
}\label{eqn:nilp}
\end{align}
and because of the nilpotence, $\phi$ induces a map $\cE/\sL \to \sL\otimes \tshf{\Y}$ which determines it completely.

Since the projective bundle $\PP(\cE)$ is unchanged if we tensor $\cE$ by a line bundle, we may tensor \eqref{eqn:nilp} by the dual of $\cE/\sL$ and assume without loss of generality that $\cE/\sL \cong \cO_\Y$. Then $\phi$ is determined by  a bundle map $\cO_\Y \to \sL\otimes \tshf{\Y}$  which may vanish on a divisor $\D \subset \Y$.  This gives a canonical identification $\sL \cong \cotshf{\Y}(\D)$, so that $\cE$ fits in an exact sequence
\[
\xymatrix{
0 \ar[r] & \tshf{\Y}(\D) \ar[r] & \cE \ar[r] & \cO_\Y \ar[r] & 0,
}
\]
from which the statement follows easily.
\end{proof}

\subsection{Blowups and minimal surfaces}

We now describe a procedure for producing new Poisson surfaces from old ones, using one of the most basic and important operations in algebraic geometry: blowing up.  We shall briefly recall how blowups of points in complex surfaces work, and refer to \cite[Section 4.1]{Griffiths1994} for details.  We remark that one can also blow up Poisson structures on higher dimensional; see \cite[Section 8]{Polishchuk1997}.

\subsubsection{Blowing up surfaces}

\paragraph{Blowing up and down}
Let us begin by recalling how to blow up a point in a surface, starting with the origin in $\CC^2$.  The idea is to delete the origin, and replace it with a copy of $\PP^1$ that parametrizes all of the lines through this point, as shown in \autoref{fig:blowup}.  One imagines zooming in on the origin in $\CC^2$ so drastically that the lines through the origin become separated.

\begin{figure}[h]
\begin{center}
\begin{tikzpicture}[scale=2]
\draw[clip] (-1,-1) rectangle (1,1);
\draw[fill=lightgrey] (-1,-1) rectangle (1,1);
\draw[thick,red] (60:2) -- (60+180:2);
\draw[thick,purple] (120:2) -- (120+180:2);
\draw[thick,orange] (180:2) -- (180+180:2);
\draw[fill,blue] (0,0) circle (0.04);
\draw (-0.8,0.8) node {$\CC^2$};
\end{tikzpicture}
\hspace{2cm}
\begin{tikzpicture}[scale=2]
\def\r{0.4}
\draw[clip] (-1,-1) rectangle (1,1);
\draw[fill=lightgrey] (-1,-1) rectangle (1,1);
\draw[thick,red] (60:2) -- (60+180:2);
\draw[thick,purple] (120:2) -- (120+180:2);
\draw[thick,orange] (180:2) -- (180+180:2);
\draw[fill,white] (0,0) circle (\r);
\draw[thick,blue,->] (-30:\r) arc (-30:150:\r);
\draw[thick,blue,->] (150:\r) arc (150:330:\r);
\draw[red,fill] (60:\r) circle (0.02);
\draw[red,fill] (180+60:\r) circle (0.02);
\draw[purple,fill] (120:\r) circle (0.02);
\draw[purple,fill] (180+120:\r) circle (0.02);
\draw[orange,fill] (0:\r) circle (0.02);
\draw[orange,fill] (180:\r) circle (0.02);
\draw (-0.8,0.8) node {$\widetilde{\CC}^2$};
\draw[blue] (0.65,0.3) node {$\E \cong \PP^1$};
\end{tikzpicture}
\end{center}
\caption{The blowup of the origin in $\CC^2$ replaces the origin with a copy of $\PP^1$ called the exceptional curve.  The lines through the origin correspond to disjoint curves in the blowup, each meeting the exceptional curve in a single point.}
\label{fig:blowup}
\end{figure}
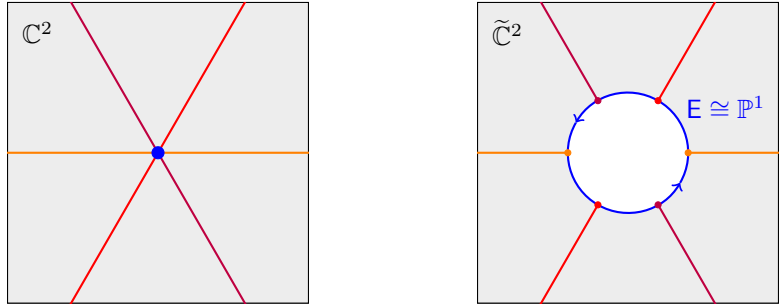

To be more precise, the \defn{blowup} of $\CC^2$ at the origin is the set $\widetilde{\CC}^2$ consisting of pairs $(p,\bL)$, where $\bL \subset \CC^2$ is a one-dimensional linear subspace, and $p \in \bL$.  There is a natural map
\[
b : \widetilde{\CC}^2 \to \CC^2,
\]
called the \defn{blowdown}, which simply forgets the line $\bL$:
\[
b(p,\bL) = p.
\]
If $p \ne 0 \in \CC^2$, then there is a unique linear subspace through $p$, so $b^{-1}(p)$ is a single point.  On the other hand, if $p=0$, then every subspace contains $p$, and we get
\[
b^{-1}(p) = \{\textrm{lines through 0 in }\CC^2\} = \PP^1
\]
So $\widetilde{\CC}^2$ is obtained by replacing the origin in $\CC^2$ with a copy of $\PP^1$, which we will henceforth refer to as the \defn{exceptional divisor}
\[
\E \cong \PP^1 \subset \widetilde{\CC}^2.
\]
To see that the blowup is actually a complex surface, consider the map $\widetilde{\CC}^2 \to \PP^1$ given by $(p,\bL) \mapsto \bL$.  The fibre over $\bL \in \PP^1$ is evidently  a copy of the one-dimensional vector space $\bL$.  Thus $\widetilde{\CC}^2$ is, in a natural way, the total space of a line bundle over $\PP^1$---the so-called \defn{tautological line bundle} $\cO_{\PP^1}(-1)$.  From this perspective, the exceptional curve is the zero section of $\cO_{\PP^1}(-1)$. 

More concretely, we may define coordinates on $\widetilde{\CC}^2$ using an affine coordinate chart $\set{[1:v]}{v\in \CC} \subset \PP^1$.  Every point $p$ on the line  $\bL = [1:v] \in \PP^1$ has the form $p = (u,uv)$ for some unique $u \in \CC$.  This gives coordinates $(u,v)$ on the open set
\[
\U = \set{(p,\bL) \in \widetilde{\CC^2} }{\bL\textrm{ is not the }y\textrm{-axis}}
\]
In these coordinates, the blowdown is given by
\[
b(u,v) = (u,uv)
\]
and the exceptional curve $\E\cap \U \subset \U$ is defined by the equation $u=0$.  Clearly similar formulae are valid in the other chart on $\PP^1$.

\paragraph{Blowing up points on abstract surfaces}
The local picture above can be replicated on any complex surface. Let $p \in \X$ be a point in complex surface.  While there is no reasonable notion of a one-dimensional linear subspace through $p$ in $\X$, we can instead consider lines in the tangent space $\T_p\X$.  The set of one-dimensional subspaces of $\T_p\X$ forms a projective line
\[
\E = \PP(\T_p\X) \cong \PP^1.
\]
The \defn{blowup of $\X$ at $p$} is then the surface $\tX$ that is obtained by replacing $p$ with the curve $\E$.  Thus, as a set, we have
\[
\tX = \rbrac{\X \setminus \{p\}} \sqcup \E.
\]
The blow-down map
\[
b : \tX \to \X
\]
is the identity map on $\X\setminus \{p\}$, but contracts the whole exceptional curve to $p$:
\[
b(\E) = \{p\}
\]
Using local coordinates at $p$, our calculations on $\CC^2$ above can be used to give $\tX$ the structure of a complex surface.  Thus a tubular neighbourhood of $\E \cong \PP^1$ in $\tX$ is isomorphic to a neighbourhood of the zero section in the line bundle $\cO_{\PP^1}(-1)$.  This means that the normal bundle of $\E$ is a copy of $\cO_{\PP^1}(-1)$, and that the self-intersection number of $\E$ is 
\[
[\E]\cdot[\E] = - 1.
\]

The surface $\tX$, being isomorphic to $\X$ away from $p$, is only slightly more complicated than $\X$ itself.  For example, one can use a  Mayer--Vietoris argument to compute the homology groups:
\begin{proposition}\label{prop:blowup-hlgy}
If $\tX$ is the blowup of $\X$, with exceptional curve $\E \subset \tX$, then its homology groups are given by
\[
\hlgy[i]{\tX,\ZZ} = \begin{cases}
\hlgy[i]{\X,\ZZ} & i \ne 2 \\
\hlgy[2]{\X,\ZZ} \oplus \ZZ \cdot [\E] & i =2.
\end{cases}
\]
\end{proposition}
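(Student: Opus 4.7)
The plan is to compute both sides using the Mayer--Vietoris long exact sequence applied to carefully chosen open covers of $\tX$ and $\X$, and then compare the two sequences via naturality.

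First I would choose a tubular neighbourhood $\U \subset \tX$ of the exceptional curve $\E$, modelled (by the discussion preceding the proposition) on an open neighbourhood of the zero section in the total space of $\cO_{\PP^1}(-1)$, so that $\U$ deformation retracts onto $\E \cong \PP^1 \cong S^2$. Let $\V = \tX \setminus \E$, which the blow-down identifies with $\X \setminus \{p\}$. The intersection $\U \cap \V$ deformation retracts onto the unit circle bundle of $\cO_{\PP^1}(-1)$; since the total space of $\cO_{\PP^1}(-1)$ minus its zero section is $\CC^2 \setminus \{0\}$, this circle bundle is homotopy equivalent to $S^3$ (concretely, it is the Hopf fibration). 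In parallel, I would cover $\X$ by a small contractible ball $\B$ around $p$ and the complement $\X\setminus\{p\}$, with intersection again homotopy equivalent to $S^3$.

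Next I would write down the two Mayer--Vietoris long exact sequences side by side. Because the inclusions of the intersections into $\V$ and into $\X\setminus\{p\}$ agree under the identification $\V \cong \X\setminus\{p\}$, these sequences fit into a commutative ladder in which the only entries that differ are $\cohlgy[\bullet]{\E} \cong \cohlgy[\bullet]{S^2}$ on the $\tX$-side versus $\cohlgy[\bullet]{\B} \cong \cohlgy[\bullet]{\mathrm{pt}}$ on the $\X$-side. Since $\hlgy[i]{S^3}$ vanishes except for $i=0,3$, and $\hlgy[i]{S^2}$ differs from $\hlgy[i]{\mathrm{pt}}$ only in degree $2$, a routine application of the five lemma in each degree $i \ne 2$ gives $\hlgy[i]{\tX,\ZZ} \cong \hlgy[i]{\X,\ZZ}$.

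In degree $2$ the two sequences reduce to
\[
0 \to \hlgy[2]{\E,\ZZ} \oplus \hlgy[2]{\X\setminus\{p\},\ZZ} \to \hlgy[2]{\tX,\ZZ} \to 0
\]
and
\[
0 \to \hlgy[2]{\X\setminus\{p\},\ZZ} \to \hlgy[2]{\X,\ZZ} \to 0,
\]
so combining them yields $\hlgy[2]{\tX,\ZZ} \cong \hlgy[2]{\X,\ZZ} \oplus \hlgy[2]{\E,\ZZ}$, with the extra summand generated by the image of the fundamental class of $\E$ under the inclusion $\E\hookrightarrow\tX$, i.e.\ by $[\E]$. The only nontrivial geometric input is the identification of the link of $\E$ with $S^3$ via the Hopf fibration, which is the main point to verify carefully; everything else is formal homological algebra.
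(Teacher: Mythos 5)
Your proposal is correct and follows exactly the route the paper intends: the text only gestures at ``a Mayer--Vietoris argument,'' and your write-up supplies the missing details (the identification of the link of $\E$ with $S^3$ via the Hopf fibration, the commutative ladder comparing the two sequences, and the five-lemma/degree-$2$ splitting), all of which check out.
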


\paragraph{Blowing down}

There is also a method for deciding when a given surface $\Y$ can be obtained as the blowup of another surface.  The idea is to characterize the  curves $\E  \subset \Y$ that are candidates for the exceptional divisor of a blowup.    First of all, by construction, such a curve must be isomorphic to $\PP^1$.  The standard nomenclature for such a curve is as follows:

\begin{definition}
A \defn{smooth rational curve} is a complex manifold that is isomorphic to the projective line $\PP^1$.
\end{definition}

As we have seen, the rational curves that arise as exceptional curves in a surface are embedded in a special way: they have a tubular neighbourhood isomorphic to the zero section in the tautological line bundle over $\PP^1$.  We thus single out these curves as special:

\begin{definition}
A \defn{$(-1)$-curve} on a surface $\Y$ is a smooth rational curve $\E \subset \Y$ having one (and hence all) of the following equivalent properties:
\begin{itemize}
\item The degree of the normal bundle of $\E$ is equal to $-1$.
\item The normal bundle of $\E \cong \PP^1$ is isomorphic to the tautological line bundle $\cO_{\PP^1}(-1)$.
\item The self-intersection number of $\E$ is $[\E]\cdot[\E] = -1$.
\end{itemize}
\end{definition}

The following important result explains that these criteria completely characterize the curves that arise as exceptional divisors of blowups:
\begin{theorem}[Castelnuovo--Enriques]
Let $\Y$ be a complex surface, and suppose that $\E \subset \Y$ is a $(-1)$-curve.  Then there is a surface $\X$ and a point $p \in \X$ such that $\Y$ is isomorphic to the blowup $\tX$ of $\X$ at $p$, and $\E$ is identified with the exceptional curve in $\tX$.
\end{theorem}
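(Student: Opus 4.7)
The plan is to construct the contraction $b:\Y \to \X$ analytically, by first identifying a neighborhood of $\E$ in $\Y$ with a neighborhood of the exceptional curve in the model $\widetilde{\CC}^2$, and then gluing to $\Y \setminus \E$ via the tautological blowdown $\widetilde{\CC}^2 \to \CC^2$.

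\textbf{Step 1: Analytic tubular neighborhood theorem.} First I would show that there exist open neighborhoods $\U \subset \Y$ of $\E$ and $\V \subset \widetilde{\CC}^2$ of the exceptional curve, together with a biholomorphism $\U \xrightarrow{\sim} \V$ carrying $\E$ to the exceptional curve. Since both $\E$ and the zero section of $\cO_{\PP^1}(-1)$ are smooth rational curves with normal bundle $\cO_{\PP^1}(-1)$, they have the same first-order neighborhood. The obstruction to promoting an isomorphism of $n$-th infinitesimal neighborhoods to order $n+1$ lies in $\cohlgy[1]{\E,\mathrm{Sym}^{n+1}(N^{\vee}_\E) \otimes \tshf{\E}}$; under the isomorphisms $N_\E \cong \cO_{\PP^1}(-1)$ and $\tshf{\E} \cong \cO_{\PP^1}(2)$, this cohomology group becomes $\cohlgy[1]{\PP^1,\cO_{\PP^1}(n+3)}$, which vanishes for all $n \ge 0$. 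Hence the formal neighborhoods match, and Grauert's theorem on neighborhoods of exceptional subvarieties with negative normal bundle upgrades this formal isomorphism to an honest analytic one.

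\textbf{Step 2: Local contraction.} Once $\U \cong \V$ is fixed, compose with the restriction of the standard blowdown $b_0 : \widetilde{\CC}^2 \to \CC^2$ from \autoref{fig:blowup}. This yields a holomorphic map $\U \to \B \subset \CC^2$ to an open neighborhood $\B$ of the origin, which is biholomorphic away from $\E$ and contracts $\E$ to $0 \in \CC^2$.

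\textbf{Step 3: Global gluing.} Define $\X$ as the pushout
\[
\X = \rbrac{\Y \setminus \E} \sqcup \B \big/ \sim,
\]
where the identification on the overlap $\U \setminus \E \cong \B \setminus \{0\}$ is given by the map from Step 2. Since the gluing map is a biholomorphism of open sets, $\X$ inherits the structure of a complex surface, the chart $\B \subset \X$ gives an explicit manifold chart around the distinguished point $p$ corresponding to $0 \in \B$, and there is a tautological holomorphic map $b : \Y \to \X$ contracting $\E$ to $p$ and biholomorphic elsewhere. Finally, because $\B \cong \CC^2$ near $p$ and $\U \cong \V \subset \widetilde{\CC}^2$ near $\E$, the local picture of $b$ over $p$ is by construction the standard blowup of a point in $\CC^2$; this directly identifies $\Y$ with the blowup of $\X$ at $p$, carrying $\E$ to the exceptional divisor.

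\textbf{Main obstacle.} The substantive step is Step 1: the existence of an analytic tubular neighborhood of $\E$ modelled on $\cO_{\PP^1}(-1)$. The cohomological input is straightforward, but passing from a formal isomorphism to a convergent one requires Grauert's theorem on exceptional subvarieties with negative normal bundle. Once this analytic neighborhood theorem is granted, the remainder is routine gluing and a check of the universal property of blowing up.
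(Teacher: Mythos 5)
Your proposal is correct in outline, but it takes a genuinely different route from the one the paper points to: the paper simply defers to the Castelnuovo--Enriques criterion in \cite[Section 4.1]{Griffiths1994}, whose proof is projective in nature (one takes a sufficiently ample divisor $\spc{H}$ on $\Y$, passes to the linear system $|\spc{H}+k\E|$ with $k$ chosen so that the restriction to $\E$ is trivial, and checks that the resulting map to projective space contracts $\E$, is an embedding elsewhere, and has smooth image). Your argument is instead the analytic one: identify a neighbourhood of $\E$ with a neighbourhood of the zero section of $\cO_{\PP^1}(-1)$ and glue in the standard blowdown. What this buys you is generality --- it applies to an arbitrary complex surface $\Y$, with no projectivity or compactness hypothesis, which is in fact the level of generality at which the theorem is stated here --- at the cost of invoking heavier machinery (Grauert's contractibility criterion for negatively embedded submanifolds and the formal principle for exceptional sets, due to Grauert and Hironaka--Rossi). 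Two small points to tighten: in Step 1 the obstruction to lifting an isomorphism of infinitesimal neighbourhoods lives in $\cohlgy[1]{\E,\mathrm{Sym}^{n+1}(N^\vee_\E)\otimes \tshf{\Y}|_\E}$, with the \emph{ambient} tangent bundle restricted to $\E$ rather than $\tshf{\E}$ alone; since $\tshf{\Y}|_\E$ is an extension of $N_\E\cong\cO_{\PP^1}(-1)$ by $\tshf{\E}\cong\cO_{\PP^1}(2)$, the group splits into $\cohlgy[1]{\PP^1,\cO_{\PP^1}(n+3)}$ and $\cohlgy[1]{\PP^1,\cO_{\PP^1}(n)}$, both of which vanish for $n\ge 0$, so your conclusion stands but the computation as written is incomplete. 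And in Step 3 you should note that the glued space is Hausdorff (routine here, since the gluing identifies a deleted neighbourhood of $\E$ with a deleted neighbourhood of the origin), which is what guarantees that $\X$ is an honest complex surface.
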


\begin{proof} See \cite[Section 4.1]{Griffiths1994}.
\end{proof}

\subsubsection{Blowing up Poisson brackets}
\subsubsection*{Blowing up}
Suppose now that the surface $\X$ carries a Poisson structure $\pi$.  If $p \in \X$, we can form the blowup $\tX$ and we might ask whether $\tX$ inherits a Poisson structure as well.  More precisely, we have the blowdown map
\[
b : \tX \to \X
\]
and we ask whether there exists a Poisson structure on $\tX$ such that $b$ is a Poisson map, i.e.~we ask that
\[
b^*\{f,g\} = \{b^*f,b^*g\}
\]
for all functions $f,g \in \cO_\X$.  Since $b$ is an isomorphism over an open dense set, there can be at most one Poisson structure on $\tX$ with this property.  Hence, the question is simply whether the Poisson structure on $\tX \setminus \E \cong \X\setminus \{p\}$ may be extended holomorphically to $\tX$.

Clearly, the answer depends only on the local  behaviour of the Poisson structure in a neighbourhood of $p$, so we can work in local coordinates $x,y$ centred at $p$, and write
\[
\{x,y\} = f(x,y)
\]
for a holomorphic function $f$.

Let us choose corresponding coordinates $u,v$ on the blowup as above, so that
\begin{align*}
b^*x &= u & b^*y &= uv
\end{align*}
In particular,
\[
v = b^*(x^{-1}y)
\]
In order for the bracket to extend to the blowup, we need to ensure that $\{u,v\}$ is holomorphic in the whole coordinate chart $u,v$.  We compute
\begin{align*}
\{u,v\} &= \{b^*x,b^*(x^{-1}y)\} \\
&= b^*\{x,x^{-1}y\} \\
&= b^*(x^{-1}f(x,y)) \\
&= u^{-1}f(u,uv) \\
&= u^{-1}(f(0,0)+ug(u,v))
\end{align*}
where $g$ is holomorphic near the locus $u=0$, i.e.~the along the exceptional curve $\E$.  So in order for $\{u,v\}$ to be holomorphic along $\E$, it is necessary and sufficient that $f(0,0)= 0$.  In other words, the bivector $\pi$ on $\X$ must vanish at the point $p$ that we have blown up.  Evidently, the calculation is identical in the other coordinate chart on the blowup, and so we arrive at the

\begin{proposition}
Let $(\X,\pi)$ be a Poisson surface, let $p \in \X$ be a point in $\X$ and let $\tX$ be the blowup of $\X$ at $p$.  Then $\tX$ carries a Poisson structure $\widetilde{\pi}$ such that the blowdown map $b :\tX \to \X$ is Poisson if and only if $\pi$ vanishes at $p$.
\end{proposition}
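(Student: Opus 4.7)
The plan is to reduce the statement to a purely local calculation in a coordinate chart around $p$, essentially the one carried out in the paragraphs immediately preceding the proposition. Since the blowdown $b : \tX \to \X$ restricts to an isomorphism $\tX \setminus \E \to \X \setminus \{p\}$, the Poisson condition $b^*\{f,g\} = \{b^*f, b^*g\}$ already determines a bivector $\widetilde{\pi}$ on $\tX \setminus \E$ uniquely, and the problem becomes whether this bivector extends holomorphically across the exceptional curve $\E = b^{-1}(p)$.

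First I would observe that, by this uniqueness and the fact that $\E$ is a codimension-one analytic subset of $\tX$, the question of extension can be checked locally: it is enough to work in one of the two standard affine charts on $\tX$ covering $\E$, since the complementary chart is treated by the symmetric computation and no further gluing condition can arise.

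Next I would carry out the local computation. Choose coordinates $x,y$ on $\X$ centred at $p$ and write $\pi = f(x,y)\,\cvf{x}\wedge\cvf{y}$. In the chart with coordinates $(u,v)$ on $\tX$ satisfying $b^*x = u$ and $b^*y = uv$, the requirement that $b$ be Poisson forces
\[
\{u,v\} \;=\; b^*\{x, x^{-1}y\} \;=\; u^{-1} f(u, uv) \;=\; u^{-1} f(0,0) + g(u,v),
\]
with $g$ holomorphic near $\{u=0\} = \E$. Thus $\{u,v\}$ extends holomorphically across $\E$ in this chart if and only if $f(0,0) = 0$, i.e.~$\pi(p) = 0$. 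An identical calculation in the other standard chart yields the same condition, establishing the "only if" direction and, when $\pi(p)=0$, producing a holomorphic bivector $\widetilde{\pi}$ on all of $\tX$.

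Finally I would note that $\widetilde{\pi}$ is automatically a Poisson structure, since on a complex surface $[\widetilde{\pi},\widetilde{\pi}]$ lies in $\Gamma(\tX,\wedge^3\tshf{\tX}) = 0$, and $b$ is Poisson by construction on the dense open subset $\tX\setminus \E$, hence everywhere by continuity. I do not foresee a genuine obstacle: the content of the proposition is essentially packaged in the preceding coordinate computation, and the only point requiring care is the appeal to uniqueness so that a single-chart extension criterion suffices to characterize global extendability.
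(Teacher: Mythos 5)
Your proposal is correct and follows essentially the same route as the paper: reduce to the local coordinate computation $\{u,v\} = u^{-1}f(u,uv) = u^{-1}f(0,0) + g(u,v)$ in a standard chart, conclude that holomorphic extension across $\E$ is equivalent to $f(0,0)=0$, and note the symmetric computation in the other chart. Your added remarks on uniqueness of the extension and the automatic vanishing of $[\widetilde{\pi},\widetilde{\pi}]$ in dimension two are correct but only make explicit points the paper leaves implicit.
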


\begin{exercise}
Amongst all the possible singularities of a  curve in $\CC^2$, there are three special classes called $A$, $D$ and $E$---the simple singularities~\cite{Arnold1972}.  They are the zero sets of the polynomials in the following table:
\begin{center}
\begin{tabular}{c|c|c|c|c}
$A_k$, $k \ge 1$ & $D_k$, $k \ge 4$ & $E_6$ & $E_7$ & $E_8$ \\\hline
\rule{0pt}{2.6ex}
 $x^2+y^{k+1}$  & $x^2y+y^{k-1}$ & $x^3+y^4$ & $x^3+xy^3$  & $x^3+y^5$
\end{tabular}
\end{center}
Let $f$ be one of these polynomials, and define a Poisson structure $\pi$ on $\CC^2$ by
\[
\pi = f \cvf{x}\wedge \cvf{y}.
\]
Let $\widetilde \pi$ be the Poisson structure obtained by blowing up $\pi$ at the origin in $\CC^2$.  Describe the divisor $\D \subset \widetilde{\CC}^2$ on which $\widetilde \pi$ vanishes.\qed
\end{exercise}

\subsubsection*{Blowing down}

While blowing up Poisson structures requires some care, blowing them down is much easier.  In fact, we have the following general result, observed in \cite[Proposition 8.4]{Polishchuk1997}, which shows that holomorphic Poisson structures can often be pushed forward along maps.  Note that the analogous statement for $C^\infty$ manifolds fails dramatically.

\begin{proposition}
Let $\X$ be a complex Poisson manifold, and let $\phi : \X \to \Y$ be a holomorphic map satisfying one of the following conditions:
\begin{enumerate}
\item $\phi$ is surjective and proper with connected fibres; or
\item $\phi$ is an isomorphism away from an analytic subset $\Z \subset \Y$ of codimension at least two.
\end{enumerate}
Then there is a unique Poisson structure on $\Y$ such that $\phi$ is a Poisson map.
\end{proposition}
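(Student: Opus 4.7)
The plan is to handle uniqueness in both cases uniformly and then treat existence case by case.

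For uniqueness, observe that in both scenarios the pullback $\phi^*\colon \cO_\Y \to \phi_*\cO_\X$ is an injection of sheaves: in case (1) because $\phi$ is surjective, and in case (2) because $\phi$ restricts to an isomorphism on the dense open set $\Y\setminus\Z$. The condition that $\phi$ be a Poisson map, namely $\phi^*\{f,g\}_\Y = \{\phi^*f,\phi^*g\}_\X$ for local sections $f,g$ of $\cO_\Y$, therefore determines the bracket on $\Y$ uniquely.

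For existence in case (1), fix local holomorphic functions $f,g$ on an open $\U \subset \Y$ and consider the holomorphic function $H = \{\phi^*f,\phi^*g\}_\X$ on $\phi^{-1}(\U)$. I claim $H$ is constant along each fibre $\phi^{-1}(y)$: by properness the fibre is a compact analytic subset of $\X$, and by the connectedness hypothesis the maximum modulus principle applied to each irreducible component forces $H$ to be locally constant, hence constant, on it. Thus there is a set-theoretic function $h$ on $\U$ with $\phi^*h = H$, and the task is to verify $h$ is holomorphic. This is the essential content of the identity $\phi_*\cO_\X = \cO_\Y$, which holds for any proper surjective holomorphic map with connected fibres whose target is normal (the first step of Stein factorization). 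Setting $\{f,g\}_\Y := h$, skew-symmetry, the Leibniz rule, and the Jacobi identity transfer directly from the corresponding identities on $\X$ by applying $\phi^*$ and invoking the injectivity above. The holomorphicity of $h$ is the main obstacle here; once it is granted, the rest is formal.

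For case (2), the restriction $\phi\colon \phi^{-1}(\Y\setminus\Z) \to \Y\setminus\Z$ is a biholomorphism, so the Poisson bivector on $\X$ transports directly to a holomorphic bivector $\pi_0$ on $\Y\setminus\Z$ with $[\pi_0,\pi_0]=0$. Because $\codim_\Y\Z \geq 2$, Hartogs' extension theorem (applied to the coefficients of $\pi_0$ in local trivializations of $\wedge^2\tshf{\Y}$) produces a unique holomorphic extension $\pi_\Y$ to all of $\Y$. The Schouten bracket $[\pi_\Y,\pi_\Y]$ is then a holomorphic section of $\wedge^3\tshf{\Y}$ vanishing on the dense open set $\Y\setminus\Z$, hence identically zero, so $\pi_\Y$ is the desired Poisson structure.
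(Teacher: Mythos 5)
Your proof is correct and follows essentially the same route as the paper's: both cases reduce to the identification $\phi_*\cO_\X \cong \cO_\Y$ (constancy of holomorphic functions on the compact connected fibres in case (1), Hartogs in case (2)), after which the bracket transports formally, and you rightly flag holomorphicity of the descended function as the real content in case (1). The only cosmetic difference is that in case (2) you extend the bivector over $\Z$ rather than extending functions, but both arguments hinge on the same Hartogs extension across codimension two.
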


\begin{proof}
Suppose given an open set $\U \subset \Y$ and its preimage $\phi^{-1}(\U) \subset \X$.  We need to define a Poisson bracket on $\U$ such that the pullback map 
\[
\phi^*: \Gamma(\U,\cO_\U) \to \Gamma(\phi^{-1}(\U),\cO_{\phi^{-1}(\U)})
\]
is a morphism of Poisson algebras.

We claim that, under our assumptions, $\phi^*$ is already an isomorphism of algebras, so that this is immediate.  Indeed, in the first case, the restriction of any global function $f$ on $\phi^{-1}(\U)$ to a fibre is evidently constant, so that $f$ is the pullback of a function on $\Y$.  In the second case, the isomorphism is a consequence of Hartogs' theorem, which implies that holomorphic functions have unique extensions over codimension two subsets~~\cite[Section 0.1]{Griffiths1994}.
\end{proof}

\begin{remark}
What we have really used is the fact that we have an isomorphism $\phi_*\cO_\X \cong \cO_\Y$ of sheaves on $\Y$.\qed
\end{remark}

\begin{corollary}
Let $\X$ be a complex surface and $b: \tX \to \X$ be its blowup at a point.  Then for any Poisson structure on $\tX$, there is a unique Poisson structure on $\X$ such that $b$ is a Poisson map.
\end{corollary}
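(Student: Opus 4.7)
The plan is to deduce this directly from the preceding proposition: we need only verify that the blowdown map $b : \tX \to \X$ meets one (indeed both) of its hypotheses.

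First I would observe that the blowdown $b$ is an isomorphism over $\X \setminus \{p\}$, since this was built into the construction of $\tX$ (as a set, $\tX = (\X \setminus \{p\}) \sqcup \E$, and $b$ restricts to the identity on the first piece). Thus $b$ fails to be an isomorphism only over the single point $p \in \X$, and $\{p\} \subset \X$ is an analytic subset of codimension two (since $\X$ is a surface). So condition (2) of the proposition is satisfied, and the corollary follows immediately.

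For completeness, I would also note that condition (1) is satisfied: $b$ is surjective, it is proper (since the fibres are either single points or the compact curve $\E \cong \PP^1$, and in fact $b$ is a projective morphism in the algebraic setting, or one verifies properness directly in local charts using the description of $\tX$ as the total space of $\cO_{\PP^1}(-1)$ near $\E$), and its fibres are connected --- either reduced points away from $p$ or the connected curve $\E$ over $p$.

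There is no real obstacle here; the work was already done in the proposition, whose proof uses either the fact that $b_*\cO_{\tX} = \cO_\X$ via connectedness of fibres together with properness, or Hartogs' theorem for extension across $\{p\}$. The essential content of the corollary is simply that a blowdown is one of the standard morphisms along which holomorphic Poisson structures push forward, in stark contrast to the $C^\infty$ situation.
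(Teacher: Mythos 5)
Your proof is correct and is exactly the argument the paper intends: the corollary is stated as an immediate consequence of the preceding proposition, with the blowdown satisfying condition (2) because $\{p\}$ has codimension two in the surface (and, as you note, condition (1) as well). Nothing further is needed.
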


\subsection{Birational classification of Poisson surfaces}

The fact that we can always blow up points on surfaces means that classifying all surfaces up to isomorphism is likely an intractable task. A more reasonable goal is to find a list of ``minimal'' surfaces---surface which are not blowups of other surfaces---and describe the others in terms of these.
 
\begin{definition}
A compact complex surface is \defn{minimal} if it contains no $(-1)$-curves.
\end{definition}

Every non-minimal surface can be obtained from a minimal one by a sequence of blowups.  Indeed, suppose that $\X$ is a compact surface that is not minimal.  Then $\X$ contains a $(-1)$-curve which we may blow down to obtain a surface $\X_1$.  Then, if $\X_1$ contains a $(-1)$-curve, we can blow it down to get a surface $\X_2$, and so on.  Continuing in this way, we produce a sequence of surfaces
\[
\X \to \X_1 \to \X_2 \to \cdots ,
\]
each obtained by blowing down the previous one.  We claim that this process must halt after finitely many steps,  yielding a minimal surface $\X_n$.  Indeed, one way to see this is to recall from \autoref{prop:blowup-hlgy} that blowing down decreases the rank of the second homology group.  So the rank of the second homology of $\X$ gives an upper bound on the number of blowdowns that we could possibly perform.

One of the major results of 20th century algebraic and complex geometry was a coarse classification of the minimal surfaces into 10 distinct types according to various numerical invariants such as Betti numbers---the \defn{Enriques--Kodaira classification}.  A  discussion of these results would take us too far afield; a detailed treatment can be found, for example in \cite{Barth2004} or \cite[Section 4.5]{Griffiths1994}.  We describe here the classification of minimal \emph{Poisson} surfaces.  They fall into two broad classes: the symplectic surfaces, and the degenerate ones.

\subsubsection{Symplectic surfaces}
\label{sec:symp-surf}

Symplectic surfaces are surfaces equipped with nonvanishing Poisson structures.  In other words, a symplectic surface $\X$ admits a nonvanishing section of its anticanonical line bundle.  This means that the anticanonical bundle is trivial, and the space of Poisson structures on $\X$ is one-dimensional; they are all constant multiples of one another. 

Any compact symplectic surface is automatically minimal.  Moreover, since the Poisson structure is nonvanishing, it cannot be blown up to obtain a new Poisson surface.  The Enriques--Kodaira classification gives a complete list of symplectic surfaces:
\begin{theorem}
A surface $\X$ is symplectic if and only if it is either a complex torus or a K3 surface.
\end{theorem}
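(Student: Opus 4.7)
The plan is to reduce the theorem to the Enriques--Kodaira classification of minimal compact complex surfaces. First I would observe that, since a symplectic form is a nowhere-vanishing section of $\acan_\X = \wedge^2\tshf{\X}$, the existence of a symplectic structure on $\X$ is equivalent to triviality of the canonical bundle: $\can_\X \cong \cO_\X$. The forward direction of the theorem is then immediate: a complex torus carries a translation-invariant holomorphic $2$-form, while a K3 surface has $\can_\X \cong \cO_\X$ essentially by definition. So the entire content of the theorem is the converse statement that a compact complex surface with trivial canonical bundle is either a torus or a K3 surface.

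For the converse I would proceed in two preliminary steps before invoking the classification. First, I would confirm the minimality assertion already made in the text: if $\E \subset \X$ were a $(-1)$-curve, then since $\E \cong \PP^1$ has genus zero and self-intersection $-1$, the adjunction formula gives
\[
\can_\X \cdot [\E] = (2g(\E)-2) - [\E]\cdot[\E] = -2-(-1) = -1,
\]
contradicting triviality of $\can_\X$. Second, since $\can_\X$ is trivial, so is every tensor power $\can_\X^{\otimes n}$, and hence every plurigenus equals $1$; in particular the Kodaira dimension satisfies $\kod(\X) = 0$.

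It then suffices to consult the list of minimal Kähler surfaces of Kodaira dimension zero provided by the Enriques--Kodaira classification, which comprises exactly four families: complex tori, K3 surfaces, Enriques surfaces, and bielliptic surfaces. For the latter two classes, the canonical bundle is a nontrivial torsion element of $\Pic(\X)$---of order two in the Enriques case, and of order two, three, four or six in the bielliptic case---so $\can_\X$ admits no nowhere-vanishing section and no symplectic form can exist. This leaves only complex tori and K3 surfaces, as claimed. The principal obstacle in this argument is the Enriques--Kodaira classification itself, which is a deep theorem that must be used as a black box; a minor subtlety to flag is that in the non-Kähler setting primary Kodaira surfaces also have trivial canonical bundle, but these are excluded by the Kähler hypotheses implicit throughout the paper (in particular by the Beauville--Bogomolov--Michelsohn theorem cited above).
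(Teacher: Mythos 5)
Your proof is correct and follows essentially the same route as the paper, which states this result as a direct consequence of the Enriques--Kodaira classification without further argument; your write-up simply supplies the details the paper leaves to the reader (triviality of $\can_\X$, minimality via adjunction against a $(-1)$-curve, Kodaira dimension zero, and the exclusion of the torsion-canonical classes). Your remark about primary Kodaira surfaces is well taken: these non-K\"ahler surfaces do have trivial canonical bundle and hence carry holomorphic symplectic forms, so the theorem as literally stated does require the K\"ahler (or projective) restriction you identify --- a caveat the paper leaves implicit.
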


\paragraph{Complex tori:} These are the surfaces that are isomorphic to a quotient $\CC^2 / \Lambda$, where $\Lambda \cong \ZZ^4 \subset \CC^2$ is a lattice of translations; hence they are topologically equivalent to a four-torus, but different lattices will result in non-isomorphic complex structures.  These surfaces are symplectic because the standard Darboux symplectic structure on $\CC^2$ is invariant under translation, and hence it descends to the quotient.  A complex torus that admits an embedding in projective space is called an \defn{abelian variety}; these are characterized by  the classical Riemann bilinear relations~\cite[Section 2.6]{Griffiths1994}.

\paragraph{K3 surfaces:} These are the compact symplectic surfaces that are simply connected.  They are all diffeomorphic as $C^\infty$ manifolds, but there are many non-isomorphic complex structures in this class.  One way to produce a K3 surface is to take the zero locus in $\PP^3$ of a homogeneous quartic polynomial, but there are many examples that do not arise in this way; indeed, many K3 surfaces cannot be embedded in any projective space.   See \cite{Huybrechts2016} for a comprehensive treatment of these surfaces.

\subsubsection{Surfaces with degenerate Poisson structures}

It remains to deal with the case of \defn{degenerate} Poisson structures---Poisson structures whose divisor of zeros is nonempty.  The Enriques--Kodaira classification gives the following list of possibilities:

\begin{theorem}
Let $\X$ be a minimal surface that admits a degenerate Poisson structure.  Then $\X$ is either $\PP^2$, a ruled surface, or a class \classseven{} surface.
\end{theorem}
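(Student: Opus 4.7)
\emph{Proof plan.} The plan is to show that a minimal complex surface carrying a degenerate Poisson structure must have Kodaira dimension $\kappa(\X) = -\infty$, and then to invoke the Enriques--Kodaira classification. Recall that a Poisson structure on $\X$ is a section of the anticanonical line bundle $\acan_\X$, and that a degenerate one is a nonzero such section that vanishes on a nonempty divisor.

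I would suppose, for contradiction, that $\kappa(\X) \geq 0$, so that some positive tensor power $\can_\X^{\otimes N}$ admits a nonzero section $s$.  Pairing with $\pi^{\otimes N}$ produces the global section
\[
s \otimes \pi^{\otimes N} \in \Gamma(\X,\cO_\X) = \CC,
\]
which is some constant $c$.  If $c = 0$, then in any local trivialization of $\can_\X$ the product of the local expressions for $s$ and $\pi^{\otimes N}$ vanishes identically, so at every point of $\X$ either $s$ or $\pi$ is zero; this would express the irreducible surface $\X$ as the union of the two proper analytic subsets $\Zeros(s)$ and $\Zeros(\pi)$, which is impossible.  Hence $c \neq 0$, which means that neither $s$ nor $\pi$ has any zero on $\X$.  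But $\pi$ being nowhere vanishing contradicts the hypothesis that it is degenerate, so $\kappa(\X) = -\infty$.

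Next I would invoke the Enriques--Kodaira classification, which asserts that the minimal compact complex surfaces of Kodaira dimension $-\infty$ fall into exactly three classes: the projective plane $\PP^2$; the geometrically ruled surfaces, i.e.\ holomorphic $\PP^1$-bundles over a smooth compact curve (including the Hirzebruch surfaces); and the minimal class \classseven{} surfaces.  This yields the claimed trichotomy.  The main obstacle is really just the appeal to the Enriques--Kodaira classification itself, a deep theorem used here as a black box; by contrast, the step ruling out $\kappa(\X) \geq 0$ is elementary and self-contained, relying only on the fact that $\can_\X$ and $\acan_\X$ are inverse line bundles and that $\X$ is irreducible.
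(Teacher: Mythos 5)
Your proposal is correct and follows exactly the route the paper intends: the paper states this result as a direct consequence of the Enriques--Kodaira classification, and your preliminary step (showing that a nonzero section of $\acan_\X$ with nonempty zero divisor forces all plurigenera to vanish, hence $\kod(\X)=-\infty$) is precisely the standard reduction that the paper leaves implicit. The argument via the pairing $s\otimes\pi^{\otimes N}\in\Gamma(\X,\cO_\X)=\CC$ is sound, using only compactness, connectedness, and the fact that the product of two not-identically-zero sections of line bundles is not identically zero on an irreducible surface.
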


\subparagraph{The projective plane:} We have already seen the classification of Poisson structures on $\PP^2$ in \autoref{sec:p2}; they are essentially the same as cubic curves.  

\subparagraph{Ruled surfaces:} We dealt with the classification of these in \autoref{sec:ruled}.

\subparagraph{Class  \classseven{}  surfaces:} These are the minimal surfaces whose first Betti number is equal to one, and for which the powers of the canonical bundle $\can_\X^{d}$ for $d> 0$ have no nonzero holomorphic sections.  These surfaces do not admit K\"ahler metrics, and in particular, they are not projective.

At the time of writing, the classification of class \classseven{} surfaces remains a major open problem in the theory of complex surfaces.  However, the list of class \classseven{} Poisson surfaces can be assembled from known results.  Each such surface contains a so-called \defn{global spherical shell}---an open subset that is  isomorphic to a tubular neighbourhood of the three-sphere in $\CC^2$, and whose complement is connected.   As explained in \cite{Dloussky1984}, the geometry of these surfaces is intimately connected with the behaviour of germs of mappings $(\CC^2,0) \to (\CC^2,0)$.  We thank Georges Dloussky for providing us with the following classification of pairs $(\X,\D)$ where $\X$ is a class \classseven{} surface and  $\D \subset \X$ is an anticanonical divisor:
\begin{itemize}
\item $\X$ is a \defn{Hopf surface}.  Such a surface is either diffeomorphic to the product $S^1\times S^3$, in which case it is called \defn{primary}, or it is a finite quotient of a primary Hopf surface.  The anticanonical divisor is either a disjoint pair of elliptic curves with multiplicity one, or a single elliptic curve with positive multiplicity.  See \cite[Section 10]{Kodaira1966} for an introduction to Hopf surfaces and their anticanonical divisors, and \cite{Kato1975a,Kato1989} for a discussion of the possible quotients of the primary ones.  (Notice that not every quotient is Poisson, since the group action may not preserve the Poisson structure on the primary surface.)
\item $\X$ is a \defn{parabolic Inoue surface} and $\D$ is the disjoint union of an elliptic curve and a cycle of rational curves.  Such surfaces are examples of Enoki surfaces~\cite{Enoki1981}.
\item $\X$ is a \defn{hyperbolic Inoue surface} (also known as an even Inoue--Hirzebruch surface), and   $\D$ is the sum of two disjoint cycles of rational curves; see \cite[p.~103]{Inoue1977} and \cite[Proposition 2.14]{Dloussky1988}.
\item $\X$ is an \defn{intermediate Kato surface}, and belongs to a special hypersurface in the moduli space of such surfaces; see \cite[Theorem 5.2]{Dloussky1999} and \cite[Proposition 4.24]{Dloussky2014}.  There is a finite collection $\D_1,\ldots,\D_n$ of rational curves in $\X$ whose fundamental classes generate $\hlgy[2]{\X,\QQ}$.  Each of these curves is either smooth or nodal, and every divisor in $\X$ is a linear combination of them.  The existence of an anticanonical divisor in $\X$ depends on the existence of a solution to a linear system of Diophantine equations, defined using the intersection form on $\hlgy[2]{\X,\ZZ}$ and the arithmetic genera of the components~\cite[Lemma 4.2]{Dloussky1999}.
\end{itemize}

\section{Poisson threefolds}

\label{sec:threefolds}

We now turn our attention to three-dimensional Poisson structures.  Dimension three is the lowest in which the integrability condition $[\pi,\pi] = 0$ for a Poisson structure is nontrivial.  Correspondingly, there is a substantial increase in complexity compared with Poisson surfaces.

If $\X$ is a threefold, we have an isomorphism
\[
\wedge^2\tshf{\X} \cong \forms[1]_\X \otimes \wedge^3 \tshf{\X} = \forms[1]_\X \otimes \acan_\X,
\]
so that bivectors can be alternatively be viewed as one-forms with values in the anticanonical line bundle.  In other words, every bivector field may be written locally as an interior product
\[
\pi = \hook{\alpha}\mu
\]
where $\mu \in \acan_\X$ and $\alpha \in \forms[1]_\X$.  One can easily check that the integrability condition $[\pi,\pi] = 0$ is equivalent to the equation
\begin{align}
\alpha \wedge d \alpha = 0 \label{eqn:int-oneform}
\end{align}
that ensures that the kernel of $\alpha$ gives an integrable distribution on $\X$.  

\begin{exercise}
Verify this claim. \qed
\end{exercise}

As was the case for surfaces, the symplectic leaves must all have dimension zero or two.  But now the  two-dimensional symplectic leaves are no longer open, and their behaviour can be quite complicated; for example, the individual leaves can be dense in $\X$.  Nevertheless, one can get some very good control over the behaviour and classification of Poisson threefolds.

\subsection{Regular Poisson structures}

As a warmup, let us consider the simplest class of Poisson threefolds: the regular ones.  We recall that a Poisson manifold  $(\X,\pi)$ is  \defn{regular} if all of its symplectic leaves have the same dimension.  Equivalently, $\pi$ is regular if it has constant rank, when viewed as a bilinear form on the cotangent spaces of $\X$.

For a nonzero Poisson structure on a threefold, regularity means that all of the leaves have dimension two, and a theorem of Weinstein~\cite{Weinstein1987} implies that the Poisson structure is locally equivalent to a product $\CC^2 \times \CC$ where $\CC^2$ has the standard Poisson structure $\cvf{x}\wedge \cvf{y}$ and $\CC$ carries the zero Poisson structure.

Clearly, if $\Y$ is a symplectic surface and $\Z$ is a curve, the product $\X = \Y \times \Z$ carries a regular Poisson structure whose symplectic leaves are the fibres of the projection to $\Z$.  Now suppose that $\X$ carries a free and properly discontinuous action of a discrete group $\G$, and that the Poisson structure is invariant under the action of $\G$.  Then the quotient $\X/\G$ will be a new Poisson threefold, and since the quotient map $\X \to \X/\G$ is a covering map compatible with the Poisson brackets, it follows that the Poisson structure on $\X/\G$ is regular.

In this way, one can easily construct examples of compact Poisson threefolds whose individual symplectic leaves are dense submanifolds:

\begin{exercise}
Consider the Poisson structure on $\CC^3 \cong \CC^2 \times \CC$ given in coordinates $x,y,z$ by
\[
\pi = \cvf{x}\wedge\cvf{y}.
\]
Let $\Lambda \cong \ZZ^6 \subset \CC^3$ be a generic lattice of translations, so that $\X = \CC^3/\Lambda$ is a compact six-torus.  Determine the conditions under which the symplectic leaves of $\X$ will be dense. \qed
\end{exercise}

The construction of regular Poisson threefolds from discrete group actions may seem somewhat simplistic, but in fact all regular projective Poisson threefolds arise in this way.  This is guaranteed by the following remarkable and nontrivial theorem of Druel, which relies on results from the minimal model program for threefolds:  

\begin{theorem}[\cite{Druel1999}]\label{thm:druel}
Suppose that $(\X,\pi)$ is a smooth projective Poisson threefold, and that the zero set $\Zeros(\pi)\subset \X$ is finite.  Then in fact $\Zeros(\pi)$ is empty, so that $\pi$ regular.  Moreover $(\X,\pi)$ is isomorphic to a quotient
\[
\X \cong (\Y \times \Z)/\G
\]
as above, and it falls into one of the following four classes:
\begin{itemize} 
\item $\Y = \CC^2$ with the standard Poisson structure, and $\Z=\CC$.  The group $\G$ is a lattice of translations on $\CC^2\times \CC$, so that $\X$ is an abelian threefold, and the symplectic leaves are given by (possibly irrational) linear flows.
\item $\Y = \CC^2$ and $\Z = \PP^1$.  The group $\G$ is a lattice of translations on $\Y$, which also acts on $\Z$  by projective transformations.  Thus $\X$ is a flat $\PP^1$-bundle over an abelian surface; the symplectic leaves are the horizontal sections of the flat connection.
\item $\Y$ is an abelian surface and $\Z$ is a compact curve.  The group $\G  \subset \Aut(\Z)$ acts on $\Y \times \Z$ by
\[
g \cdot (y,z) = (u_g(y)+t_g(z), g\cdot z)
\]
where $u_g \in \Aut(\Y)$ is a symplectic automorphism of $\Y$ that preserves both its group structure, and 
 $t_g : \Z \to \Y$ is a holomorphic map.  Thus $\X$ is a symplectic fibre bundle with abelian fibres over an orbifold curve.

\item $\Y$ is a K3 surface and $\Z$ is a compact curve. The action of $\G$ on $\Y\times \Z$ is the product of a free action on $\Z$ and an action on $\Y$ that preserves its symplectic structure.  Thus $\X$ is a symplectic fibre bundle with K3 fibres over a smooth curve.
\end{itemize}
\end{theorem}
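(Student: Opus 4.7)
The plan is to proceed in two stages: first prove the strengthened statement that $\Zeros(\pi)$ is actually empty, and then exploit the resulting regular symplectic foliation to realize $\X$, up to a finite \'etale cover, as a product of the advertised form. For the first stage, I would argue by contradiction, assuming $\pi$ has an isolated zero at a point $p \in \X$. Viewed as a section of the rank-three bundle $\wedge^2 \tshf{\X} \cong \forms[1]_\X \otimes \acan_\X$, the bivector $\pi$ has a well-defined local multiplicity at $p$, and the total count of zeros, weighted by multiplicity, equals the Chern number $\int_\X c_3(\forms[1]_\X \otimes \acan_\X)$. The integrability condition $\alpha \wedge d\alpha = 0$ appearing in \eqref{eqn:int-oneform} tightly restricts the one-jet of $\pi$ at $p$, and the Hamiltonian vector fields of local coordinates generate a nontrivial local symmetry fixing $p$. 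Using Mori's classification of extremal rays on a smooth projective threefold, combined with the compatibility of $\pi$ with any extremal contraction, one rules out the existence of such an isolated zero and thereby forces $\pi$ to be regular.

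Granting regularity, the image of $\pi^\sharp : \cotshf{\X} \to \tshf{\X}$ is an involutive rank-two subbundle, defining a codimension-one foliation $\F$ whose leaves are precisely the symplectic leaves of $\pi$. Since $\pi$ is a nowhere-vanishing section of $\wedge^2\tshf{\F}$, the tangent bundle of the foliation has trivial determinant, so the foliated canonical bundle $\can_\F$ is trivial and $\acan_\X$ coincides with the conormal bundle of $\F$. The classification then reduces to analyzing codimension-one foliations on smooth projective threefolds with $\can_\F \cong \cO_\X$, which splits according to whether the generic leaf is algebraic. If the leaves are algebraic, a Stein factorization argument produces a fibration $\X \to \B$ over a compact curve or orbifold curve whose generic fibre is a compact K\"ahler symplectic surface, hence either an abelian surface or a K3 surface, yielding the last two cases of the theorem. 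If the generic leaf is not algebraic, the vanishing of $\can_\F$ together with projectivity forces the transverse structure to be a flat affine or projective structure, and after passing to a finite \'etale cover one identifies $\X$ with a flat $\PP^1$- or $\CC$-bundle over an abelian surface, giving the first two cases. In every case, the deck transformations of the chosen cover furnish the group $\G$ and the equivariant product decomposition.

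The main obstacle is the first stage. Ruling out isolated zeros is not a formal consequence of the integrability equation—on general compact complex threefolds isolated zeros of integrable bivectors can and do occur—so any proof must use the projectivity of $\X$ in an essential way, with the minimal model program supplying the key geometric input. By comparison, the foliation-theoretic classification in the second stage, while nontrivial, is a reasonably standard application of the theory of codimension-one foliations with numerically trivial canonical class, supplemented by a universal-cover and equivariance argument to exhibit the product structure.
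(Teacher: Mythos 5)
The paper does not actually prove this theorem: it is quoted verbatim from Druel's article \cite{Druel1999}, with only the remark that the proof ``relies on results from the minimal model program for threefolds.'' So there is no in-paper argument to compare against, and your proposal has to stand on its own. As a roadmap it identifies the right skeleton (first kill the finite zero locus, then classify the resulting regular codimension-one foliation), but as a proof it has genuine gaps in both stages. The serious one is Stage 1. You correctly note that the zero count is $\int_\X c_3(\forms[1]_\X\otimes\acan_\X)=\int_\X(c_1c_2-c_3)$, but you never say why this integral should vanish, nor do you derive any contradiction from its positivity; ``the integrability condition tightly restricts the one-jet,'' ``Hamiltonian vector fields generate a local symmetry fixing $p$,'' and ``compatibility of $\pi$ with any extremal contraction'' are three disconnected observations, not an argument. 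In particular, Poisson structures do push forward along contractions, but an isolated zero survives a birational contraction away from it, so it is not at all clear what the Mori-theoretic step is supposed to exclude. Since this is precisely the part you yourself flag as the crux, the proposal does not yet contain a proof of the statement $\Zeros(\pi)=\varnothing$.

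Stage 2 also outsources its content. The dichotomy by algebraicity of the generic leaf and the claim that triviality of $\can_\F$ forces a transversely affine or projective structure is essentially the classification of codimension-one foliations with numerically trivial canonical bundle (Touzet, Loray--Pereira--Touzet), a deep theorem that postdates Druel's paper; citing it would give a legitimate modern route, but calling it ``reasonably standard'' and leaving it unstated is a gap, not a proof. Two smaller points: with $\acan_\X\cong\det\tshf{\F}\otimes N_\F$ and $\det\tshf{\F}$ trivialized by $\pi$, the anticanonical bundle is the \emph{normal} bundle of $\F$, not the conormal bundle as you wrote; and in the algebraic-leaf case you still owe an argument that the fibration produced by Stein factorization is isotrivial (a locally trivial symplectic fibre bundle with abelian or K3 fibres), which is what the product-with-group-action description actually asserts.
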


\subsection{Poisson structures from pencils of surfaces}

We now turn to the case in which the Poisson structure is no longer regular.   As in the regular case, the global behaviour of the two-dimensional leaves may be quite complicated.  But now there is a second source of difficulty: the Poisson structure may exhibit very complicated \emph{local} behaviour, due to the singularities of the foliation in the neighbourhood of the zero-dimensional leaves.

In this subsection, we consider the special case in which the symplectic leaves lie in the level sets of a (possibly meromorphic) function, beginning with the local case $\X = \CC^3$.

\subsubsection{Jacobian Poisson structures  on $\CC^3$}
\label{sec:jacobian}
Let $x,y,z$ be global coordinates on $\CC^3$, and let $f \in \cO_{\CC^3}$ be a nonconstant holomorphic function.  Define a bivector by the formula
\begin{align*}
\pi &= \hook{df}\rbrac{\cvf{x}\wedge\cvf{y}\wedge\cvf{z}} 
\end{align*}
Since $df$ is closed, this bivector is a Poisson structure by \eqref{eqn:int-oneform}.  The elementary Poisson brackets are given by
\begin{align*}
\{x,y\} &= \pderiv{f}{z} & 
\{y,z\} &= \pderiv{f}{x} & 
\{z,x\} &= \pderiv{f}{y}.
\end{align*}
Such a Poisson bracket is called a \defn{Jacobian Poisson structure} because of its link with the derivatives of $f$.

Since the Hamiltonian vector field of $f$ is given by
\[
H_f = \hook{df}\pi = \hook{df}\hook{df}\rbrac{\cvf{x}\wedge\cvf{y}\wedge\cvf{z}} = 0
\]
we must have that $\{f,g\} = 0$ for all functions $g$, i.e.~$f$ is a \defn{Casimir function}.  Put slightly differently, we have $\lie_{H_g}(f) = 0$, so that $f$ is invariant under all Hamiltonian flows.

Since the Hamiltonian flows sweep out the symplectic leaves, it follows that for each $c \in \CC$, the  fibre
\[
\Y_c = f^{-1}(c) \subset \CC^3
\]
is a union of symplectic leaves.  Because of the dimension, each $\Y_c$ is a surface, and we can explicitly describe the  symplectic leaves in terms of the geometry of the surfaces, as follows.

First of all, notice that the points in $\CC^3$ where the Poisson structure vanishes are precisely the points where $df = 0$, i.e.~the critical points of $f$, or equivalently the singular points of the fibres $\Y_c$ for $c \in \CC$.  Away from these points, the symplectic leaves have dimension two---the same dimension as the fibres in which they are contained.  Hence they must be open subsets of the fibres.  We therefore arrive at the following description of the leaves:
\begin{itemize}
\item The zero-dimensional leaves are the singular points of the fibres of $f$
\item The two-dimensional leaves are  the connected components of the smooth loci of the fibres of $f$.
\end{itemize}

\begin{example}\label{ex:sl2}
Let $f = \tfrac{1}{2}x^2+2yz$.  Then we obtain the linear Poisson brackets
\begin{align*}
\{x,y\} &= 2y  & \{y,z\} &= x  & \{z,x\} &= 2z
\end{align*}
that are associated with the $\sln{2,\CC}$ Lie algebra.  The only critical point of $f$ is the origin, where $f$ has a Morse-type singularity.  The level sets of $f$ are the quadric surfaces
\[
\tfrac{1}{2}x^2+2yz = c.
\]
For $c \ne 0$, the level sets are smooth, giving symplectic leaves.  But when $c=0$, the level set is a cone with a singularity at the origin.  It is the union of two leaves: the origin is a zero-dimensional leaf, and the rest of the cone is a two-dimensional leaf.  See \autoref{fig:sl2}. \qed
\end{example}

\begin{figure}
\center
\includegraphics[scale=0.25]{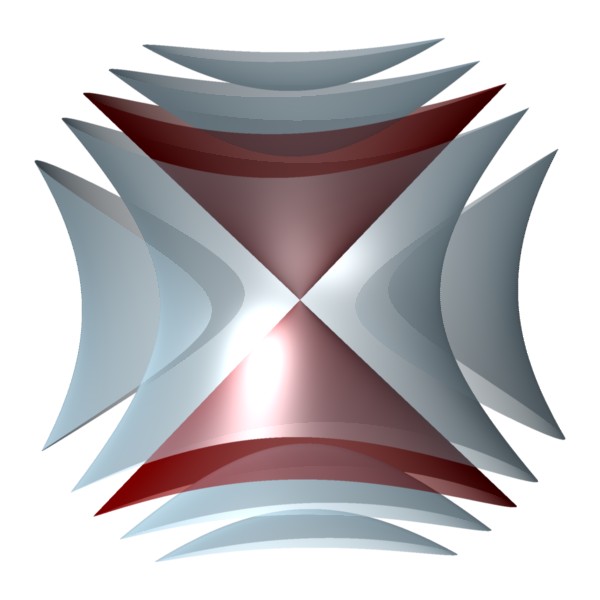}
\caption{Symplectic leaves of the Jacobian Poisson structure on $\CC^3$ defined by the function $f=\tfrac{1}{2}x^2+2yz$.  The red cone is the level set $f^{-1}(0)$, whose singular point is the unique zero-dimensional leaf.}
\label{fig:sl2}
\end{figure}

\begin{example}
Let $f=xyz$, so the Poisson brackets are
\begin{align*}
\{x,y\} &= xy  & \{y,z\} &= yz  & \{z,x\} &= zx
\end{align*}
Let us determine the structure of the level sets
\[
xyz = c.
\]
When $c\ne 0$, all three of $x,y,z$ must be nonzero.  If we fix $x,y \in \CC^*$, then $z$ is uniquely determined as $z = \frac{c}{xy}$.  Thus the level set is smooth, giving a symplectic leaf isomorphic to $(\CC^*)^2$. 

On the other hand, the zero level set is given by the equation
 \[
xyz=0,
\]
and is therefore the union of the coordinates planes in $\CC^3$.  This variety is singular where the planes meet.  Thus the zero-dimensional leaves are given by the coordinate axes in $\CC^3$.  Meanwhile there are three  distinct two-dimensional leaves in this fibre, given by taking each plane and removing the corresponding axes.  This example is shown in \autoref{fig:xyz}
\end{example}

\begin{figure}
\begin{center}
\includegraphics[scale=0.25]{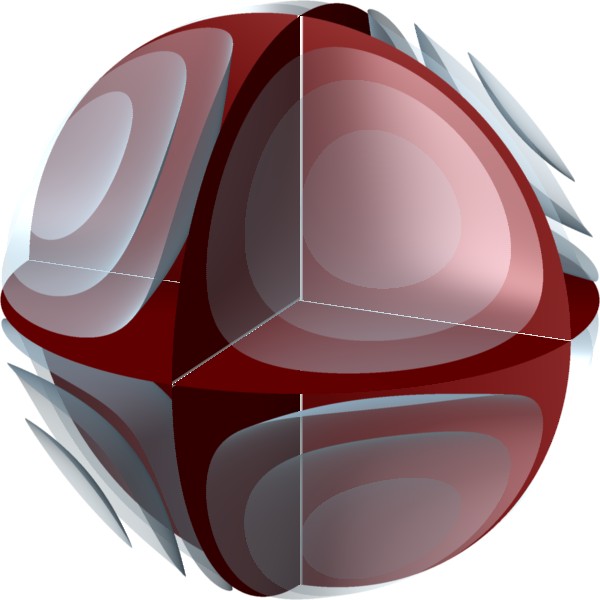}
\end{center}
\caption{Symplectic leaves of the Jacobian Poisson structure on $\CC^3$ defined by the function $f=xyz$.   The zero-dimensional symplectic leaves are the points on the coordinate axes.  The two-dimensional leaves are the coordinate planes minus their axes (shown in red), and the nonzero level sets of $f$  (shown in blue).}\label{fig:xyz}
\end{figure}

\subsubsection{Pencils of symplectic leaves}
\label{sec:pencils}
If $\X$ is a compact threefold, it will not admit any nonconstant global holomorphic functions, so the construction of Jacobian Poisson structures above will not produce anything nontrivial.  However, at least if $\X$ is projective, it will admit many nonconstant meromorphic functions, and we may try to use those instead.  To do so we need to recall another key algebro-geometric notion: that of a pencil of hypersurfaces (see \cite[Section 1.1]{Griffiths1994}).

A meromorphic function can be written locally as
\[
f = \frac{g}{h}
\]
where $g$ and $h$ are holomorphic functions.  Notice that $f$ takes on a well-defined value in $\PP^1 = \CC \cup \{\infty\}$ only at the points where $g$ and $h$ do not both vanish.  By removing any common factors of $g$ and $h$, we can assume that this indeterminacy locus $\B \subset \X$ is either empty or has codimension two in $\X$.  It is called the \defn{base locus of $f$}.  We typically write
\[
f : \X \dashrightarrow \PP^1
\]
to indicate that the map $f$ is only well-defined away from its base locus (which we leave implicit).   It is an example of the more general notion of a \defn{rational map}.

Given a point $t \in \PP^1$, we can take the fibre of $f$ in $\X\setminus \B$.  Its closure is a hypersurface $\D_t \subset \X$, and the base locus is
\[
\B = \bigcap_{t \in \PP^1} \D_t.
\]

\begin{definition}
Let $\X$ be a complex manifold.  A \defn{pencil of hypersurfaces in $\X$} is a family of closed hypersurfaces $\D_t \subset \X$ for $t \in \PP^1$ obtained as the fibres of a meromorphic function as above.
\end{definition}
\autoref{fig:pencil} shows a typical example of a pencil of surfaces.

\begin{figure}[t]
\begin{center}
\includegraphics[scale=0.3]{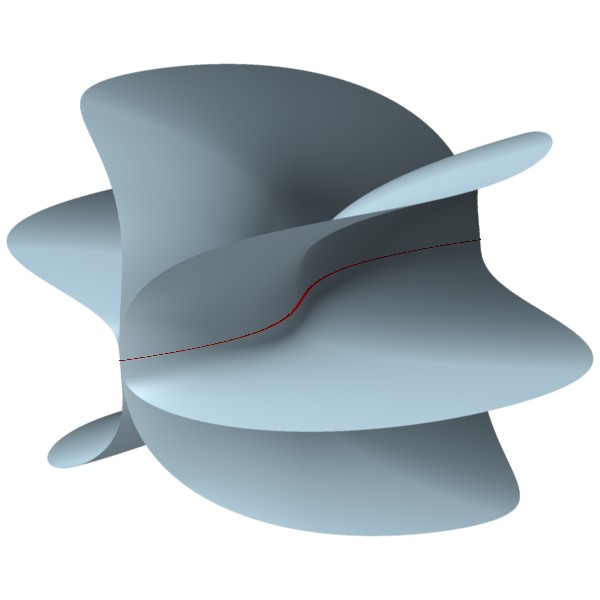}
\end{center}
\caption{A pencil of surfaces, with the base locus shown in red.}\label{fig:pencil}
\end{figure}

Given  $f : \X \dashrightarrow \PP^1$ where $\X$ is a threefold, we may try to repeat the construction of the Jacobian Poisson structure in the previous section as follows.  Thinking of $f$ as a meromorphic function, consider the meromorphic one-form
\[
\alpha = d \log f = \frac{df}{f}.
\]
We would like to contract this one-form into a trivector in order to obtain a Poisson structure.

In order for this to work, we have to deal with the poles of $\alpha$.  Considering a local presentation $f= \frac{g}{h}$, let us factor $g$ and $h$ into irreducible factors
\begin{align*}
g &= g_0^{j_1}\cdots g_m^{j_m} & h &= h_1^{k_1}\cdots k_n^{k_n},
\end{align*}
so that the irreducible components $\D_0$ and $\D_\infty$ and their multiplicities become apparent.  Then we have
\[
\alpha = j_1 \frac{dg_1}{g_1} + \cdots + j_m \frac{dg_m}{g_m} - k_1 \frac{d h_1}{h_1} - \cdots - k_n \frac{d h_n}{h_n}.
\]
It is now clear that, even if the fibres $\D_0$ and $\D_\infty$ have components of high multiplicity, the one-form $\alpha$ has only first-order poles.  We conclude that the polar divisor of $\alpha$ is the ``reduced divisor''
\[
\D = (\D_0 + \D_\infty)_{\textrm{red}} \subset \X,
\]
which has all the same irreducible components as $\D_0 + \D_\infty$, but with all multiplicities set equal to one.

Now suppose that $\D$ is an anticanonical divisor, cut out by a section $\mu \in \Gamma(\X,\acan_\X)$.  If we form the contraction
\[
\pi = \hook{\alpha} \mu,
\]
the zeros of $\mu$ will exactly cancel the poles of $\alpha$, and we will obtain a globally defined \emph{holomorphic} bivector field on $\X$.   Since $\alpha$ is closed, this bivector satisfies $[\pi,\pi] = 0$ and hence we obtain a Poisson structure on $\X$.  

The symplectic leaves of $\pi$ can now be described fairly easily.  Each hypersurface $\D_t\subset \X$ for $t \in \PP^1$ is a union of symplectic leaves, so every two-dimensional leaf is an open subset of some $\D_t$.  Meanwhile, every point in the base locus $\B = \bigcap_{t \in \PP^1} \D_t$ is a zero dimensional leaf, as are the singular points of the fibres.

\begin{exercise}\label{ex:local-pencil}
Suppose that $p \in \B$ is a point of the base locus at which $\D_0$ and $\D_\infty$ are smooth and transverse.  Show that in a suitable system of coordinates $x,y,z$ on $\X$ near $p$, the Poisson structure has the form
\[
\pi = (x\cvf{x}+y\cvf{y})\wedge\cvf{z}.
\]
Give equations for its symplectic leaves. \qed
\end{exercise}

This construction of Poisson structures from pencils  can be generalized in two ways: firstly, we can allow the possibility that $\alpha$ has zeros as well as poles, in which case we can allow the section of $\acan_\X$ to have poles as well.  Secondly, we can consider maps to curves of positive genus instead of $\PP^1$; see \cite[Section 13]{Polishchuk1997} for details.

\subsubsection{Poisson structures from pencils on $\PP^3$}

Let us now describe some examples of the above construction in the case where $\X = \PP^3$.  First of all, we note that every pencil on $\PP^3$ has the form
\[
f([x_0:x_1:x_2:x_3]) = \frac{G(x_0,x_1,x_2,x_3)}{H(x_0,x_1,x_2,x_3)}
\]
where $G$ and $H$ are homogeneous polynomials of the same degree. Evidently  the fibres over $0$ and $\infty$ are given by
\begin{align*}
\D_0 &= \Zeros(G) & \D_\infty = \Zeros(H).
\end{align*}
Now the anticanonical divisors in $\PP^3$ are precisely the quartic divisors, just as anticanonical divisors in $\PP^2$ were cubics.  We therefore we try to arrange so that $(\D_0+\D_\infty)_\red$ is a quartic.

\begin{example}[Sklyanin {\cite{Sklyanin1982}}]\label{ex:sklyanin}  Suppose that $G$ and $H$ are homogeneous quadratic polynomials.  Then the surfaces $\D_0$ and $\D_\infty$ are quadric surfaces, and so together they form an anticanonical divisor $\D = \D_0 + \D_\infty$, giving a Poisson structure $\pi$ on $\PP^3$.  If $\D_0$ and $\D_\infty$ are smooth and transverse, one can use the adjunction formula to show that the base locus
$\B = \D_0 \cap \D_\infty$ is a smooth curve of genus one.  Thus $\pi$ vanishes on an elliptic curve in $\PP^3$.  To find the remaining zeros of $\pi$, we must determine the singularities of the surfaces in the pencil.

To do so, we use the fact that a pair of homogeneous quadratic forms can always be put into a normal form.  More precisely, there exists homogeneous coordinates $x_0,\ldots,x_3$ and constants $a_0,\ldots,a_3 \in \CC$ such that
\begin{align*}
G &= x_0^2 + x_1^2+x_2^2 + x_3^2 & H &= a_0 x_0^2 + a_1 x_1^2 + a_2 x_2^2+a_3 x_3^2
\end{align*}
The surfaces in the pencil are then given by $
\tfrac{G}{H} = t$, or equivalently
\[
\lambda G - \mu H = \sum_{i=0}^3 (\lambda-\mu a_i) x_i^2 = 0 
\]
where $t =  \lambda^{-1}\mu$.  For generic values of $t$, the function $\lambda G - \mu H$ is a nondegenerate quadratic form, and hence its only critical point is the origin in $\CC^4$.  It follows that the corresponding surface $\D_t \subset \PP^3$ is smooth.

However, when $t = a_0^{-1}$, so that $\lambda = -\mu a_0$, the quadratic form has rank three, and this results in an isolated singularity of the surface $\D_t$ at the point $[1:0:0:0] \in \D_t \subset \PP^3$.  Similarly, the other values $t = a_i^{-1}$ give surfaces with isolated singularities at $[0:1:0:0]$, $[0:0:1:0]$ and $[0:0:0:1]$.  

We thus arrive at the following decomposition of $\PP^3$ into symplectic leaves:
\begin{itemize}
\item The four points $\bS = \{[1:0:0:0], \ldots, [0:0:0:1]\}$
are isolated symplectic leaves of dimension zero.  Near each of these points, one can find coordinates in which the Poisson structure takes the form of \autoref{ex:sl2}.
\item The base locus $\B = \D_0 \cap \D_\infty$ is an elliptic curve, and each of its points is a zero dimensional leaf.  Near such a point, the Poison structure is of  the type described in \autoref{ex:local-pencil}.
\item The two-dimensional leaves are given by the open sets $\D_t \setminus (\B \cup \bS)$ in the quadric surfaces $\D_t$ for $t \in \PP^1$.
\end{itemize}
Notice that $\Zeros(\pi)$ has components of dimensions zero and one.
\qed
\end{example}

\begin{example}\label{ex:r13}
Similarly, suppose that $G$ is an irreducible homogeneous cubic function and that $H$ is linear.  Then we obtain a pencil on $\PP^3$ from $f = H^{-3}G$.
Now we have $\D_0 = \Zeros(G)$ and $\D_\infty = 3 \Y$ where $\Y  \cong \PP^2 \subset \PP^3$ is the plane defined by $H = 0$.  Thus the polar divisor of $f^{-1}df$ is given by
\[
(\D_0 + \D_\infty)_{\textrm{red}} = \D_0 + \Y,
\]
which is evidently quartic.  Hence we obtain a Poisson structure whose leaves are determined by the pencil.

The base locus is the intersection $\D_0 \cap \Y$ which is a cubic curve.  If we assume that $G$ is sufficiently generic, this curve will be smooth, hence elliptic.  There are eight additional isolated points where the Poisson structure vanishes, corresponding to singularities of the surfaces in the pencil.
\qed
\end{example}

\subsection{Further constructions}
\label{sec:further-constr}
There are a number of other ways in which one can construct Poisson threefolds.  We leave the exploration of these constructions as exercises to the reader.

\paragraph{Closed meromorphic one-forms:}
Let $\X$ be a threefold, and let $\alpha$ be a closed one-form with poles on an anticanonical divisor $\D \subset \X$, cut out by the vanishing of a section $\mu \in \Gamma(\X,\acan_\X)$.  Then $\hook{\alpha}\mu$ is a holomorphic Poisson structure.  A special case is when $\alpha = d \log f$ for a pencil $f$ as above, but in general the symplectic leaves need not be the level sets of a meromorphic function.
  
\begin{exercise}\label{ex:log1111}
Let $[x_0:x_1:x_2:x_3]$ be homogeneous coordinates on $\X = \PP^3$, and let $\D = \Zeros(x_0x_1x_2x_3)$ be the union of the coordinate planes (an anticanonical divisor).  Show that every closed meromorphic one-form with first-order poles on $\D$ may be written in homogeneous coordinates as
\[
\alpha = \sum_{j=0}^3 \lambda_j \frac{dx_j}{x_j}
\]
where $\lambda_0,\ldots,\lambda_3 \in \CC$ are constants satisfying $\sum_{j=0}^3 \lambda_j =0$. Show that the induced symplectic foliation of $\PP^3 \setminus \D \cong (\CC^*)^3$ is regular, but for generic values of the constants $\lambda_0,\ldots,\lambda_3$, the leaves are not the level sets of any single-valued function. \qed
\end{exercise}

\paragraph{Horizontal lifts:} In \autoref{ex:higher-cohiggs}, we saw how to produce Poisson structures on $\PP^1$-bundles, where the base is equipped with the zero Poisson structure.  As another special case of the general construction in \cite[Section 6]{Polishchuk1997}, we can produce Poisson structures on $\PP^1$-bundles over surfaces with nontrivial Poisson structures as follows:

\begin{exercise} \label{ex:p1-over-surf}
Let $(\Y,\pi)$ be a Poisson surface with degeneracy divisor $\D \subset \Y$.  Suppose that $\cE$ is a rank-two vector bundle on $\X$ equipped with a meromorphic flat connection
\[
\nabla : \cE \to \cE \otimes \Omega^1_\X(\D).
\]
By this, we mean that $\nabla$ is a flat connection in the usual sense away from $\D$, but in a local trivialization near $\D$, it takes the form
\[
\nabla = d + f^{-1}A
\]
where $A$ is a holomorphic matrix-valued one-form, and $f$ is a local defining equation for $\D$.  Show that, even though the connection $\nabla$ is singular, the horizontal lift of $\pi$ to $\X = \PP(\cE)$ is holomorphic everywhere, so that $\X$ becomes a Poisson threefold.  Describe the symplectic leaves. \qed
\end{exercise}

\paragraph{Group actions:} Let $\G$ be a Lie group and let $\g$ be its Lie algebra.  We recall that a \defn{classical triangular $r$-matrix for $\G$} is a tensor $r \in \wedge^2 \g$ satisfying $[r,r]=0\in\wedge^3 \g$.  It corresponds to a left-invariant Poisson structure on $\G$.  

Now let $\X$ be a manifold equipped with an action of $\G$.  Then we may evidently push $r$ forward along the action map $\g \to \Gamma(\X,\tshf{\X})$ to obtain a Poisson structure on $\X$.

\begin{exercise}
Suppose that $\G$ is two-dimensional.  Then every $r \in \wedge^2 \g$ is a classical triangular $r$-matrix.  Describe the symplectic leaves of the induced Poisson structure on $\X$ in terms of the orbits of $\G$.  \qed
\end{exercise}

\begin{exercise}\label{ex:log1111-2}
Show that the Poisson structures on $\PP^3$ described in \autoref{ex:log1111} are induced by a classical triangular $r$-matrix for the group $\G = (\CC^*)^3$, where $\G$ acts on $\PP^3$ in the standard way, by rotation of the coordinates. \qed
\end{exercise}

\subsection{Poisson structures on $\PP^3$ and other Fano threefolds}

For some simple threefolds $\X$, the space $\Pois(\X) \subset \Gamma(\X,\wedge^2\tshf{\X})$  of Poisson structures can be described explicitly.  In particular, the space of Poisson structures on $\PP^3$ is quite well understood:

\begin{theorem}[\cite{Cerveau1996,Loray2013}]
The variety $\Pois(\PP^3)$ has six irreducible components, and there are explicit descriptions of the generic Poisson structures in each component.
\end{theorem}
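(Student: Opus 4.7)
The plan is to reduce the problem to the classification of codimension-one holomorphic foliations of degree $2$ on $\PP^3$, construct six explicit families using the machinery developed in this section, and then invoke foliation-theoretic rigidity to show that these exhaust $\Pois(\PP^3)$. The reduction exploits the isomorphism $\wedge^2\tshf{\PP^3} \cong \forms[1]_{\PP^3}\otimes \acan_{\PP^3} \cong \forms[1]_{\PP^3}(4)$: any bivector $\pi$ corresponds to a twisted $1$-form $\alpha$ satisfying $\alpha\wedge d\alpha = 0$ by \eqref{eqn:int-oneform}, and modulo the overall $\CC^*$-scaling, $\Pois(\PP^3)$ projects to the moduli space of codimension-one foliations of degree $2$ on $\PP^3$ whose components were classified by Cerveau--Lins Neto and refined by Loray--Pereira--Touzet.

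The six families would then be produced using the tools of this section. Two rational families come from \autoref{sec:pencils}: one from pencils of two quadrics (the Sklyanin family, \autoref{ex:sklyanin}), and one from pencils of a cubic and a plane (\autoref{ex:r13}). Two logarithmic families arise from the closed-meromorphic-one-form construction in \autoref{sec:further-constr}: one with simple poles on four hyperplanes (the toric family of \autoref{ex:log1111}), and one with simple poles on two hyperplanes plus a smooth quadric. A fifth family consists of linear pullbacks, pulling a cubic Poisson structure on $\PP^2$ (\autoref{prop:p2cubic}) back along a rational projection $\PP^3\dashrightarrow\PP^2$. A sixth, exceptional family arises from an $r$-matrix action of a three-dimensional non-abelian Lie algebra on $\PP^3$. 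Each family is the image of an explicit irreducible parameter variety under an algebraic map, hence irreducible, and a dimension count at a generic member confirms that no one lies in the closure of another.

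The main step is completeness: showing that every irreducible component of $\Pois(\PP^3)$ equals one of these six closures. The central dichotomy is whether a generic $\alpha$ in a component admits a rational first integral. If it does, its leaves lie in a pencil of hypersurfaces and the constraint that the polar divisor of $d\log f$ be supported on the anticanonical quartic $\Zeros(\pi)_\red$ restricts the bidegrees sharply, yielding the two rational families and, in degenerations where fibres acquire common irreducible components of high multiplicity, the two logarithmic families. If $\alpha$ has no rational first integral, a theorem of Jouanolou (in the sharpened form used in \cite{Cerveau1996}) forces $\alpha$ either to factor through a rational map to a lower-dimensional Poisson manifold---giving the linear pullback family---or to lie in the exceptional component, which is handled by a separate analysis specific to degree $2$.

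The hard part will be the infinitesimal rigidity step that certifies each family as a \emph{maximal} irreducible component, rather than a piece of some larger component. Concretely, at a generic $\pi$ in each candidate family, one must compute the Zariski tangent space $T_\pi\Pois(\PP^3) \subset \Gamma(\PP^3,\wedge^2\tshf{\PP^3})$ as the kernel of the linearized integrability map $\eta \mapsto [\pi,\eta]$, and verify that its dimension matches that of the parameter variety modulo the $\PGL{4}$-action. Without this infinitesimal check, one cannot exclude the possibility of families colliding in higher codimension or an unnoticed seventh component lurking at a non-generic locus, and it is precisely this step that forces recourse to the foliation-theoretic techniques of \cite{Cerveau1996,Loray2013} rather than a purely elementary construction-by-construction argument.
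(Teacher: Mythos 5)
The first thing to note is that the paper does not prove this theorem: it is quoted from \cite{Cerveau1996,Loray2013}, and the surrounding text only describes how the labour is divided between those two references. Your proposal reconstructs that division correctly in outline---pass to codimension-one degree-two foliations via $\wedge^2\tshf{\PP^3}\cong\forms[1]_{\PP^3}(4)$, exhibit two rational, two logarithmic, one pullback and one exceptional family, and defer completeness to the foliation literature---so as a roadmap it is consistent with the paper. But as a proof it has a genuine gap exactly at the reduction step. A Poisson structure $\pi=\hook{\alpha}\mu$ determines a degree-two foliation only when $\Zeros(\pi)$ has no divisorial component; if the zero locus contains a hypersurface of degree $e>0$, the induced foliation has degree $2-e$, so $\Pois(\PP^3)$ does \emph{not} simply project onto the moduli space of degree-two foliations as you claim. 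One must show that Poisson structures whose zero sets are unions of curves and isolated points are Zariski dense in $\Pois(\PP^3)$, i.e.\ that no irreducible component is lost in the divisorial locus; the paper explicitly identifies this as the contribution of \cite{Loray2013}, and your sketch passes over it. Without that step, the Cerveau--Lins Neto classification controls only a Zariski open subset of $\Pois(\PP^3)$ and cannot rule out extra components supported on structures with divisorial degeneracy.

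Two of your six families are also misdescribed. The pullback component consists of pullbacks $\phi^*\mathcal{F}$ of degree-two \emph{foliations} on $\PP^2$ (sections of $\forms[1]_{\PP^2}(4)\cong\tshf{\PP^2}(1)$) along a linear projection $\phi:\PP^3\dashrightarrow\PP^2$, not of Poisson structures on $\PP^2$ in the sense of \autoref{prop:p2cubic}, which are sections of $\cO_{\PP^2}(3)$ and carry no codimension-one foliation to pull back; in Poisson language this component is the one the paper attributes to the $\PP^1$-bundle and blow-down constructions. The exceptional component is governed by the \emph{two}-dimensional non-abelian (affine) Lie algebra, i.e.\ an $r$-matrix $S\wedge X$ with $[S,X]=X$, not a three-dimensional one. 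Finally, you locate the essential difficulty in an infinitesimal computation of $T_\pi\Pois(\PP^3)$ at a generic member of each family; that would only certify each family as a component, whereas the genuinely hard half of \cite{Cerveau1996} is the exhaustion argument showing that no seventh component exists---a step your proposal, like the paper, ultimately delegates to the reference rather than proves.
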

We refer to \cite[Section 8]{Pym2013} and \cite{Pym2015} for a detailed description of the geometry of these Poisson structures and their quantizations.  Let us just remark that the Poisson structures in each component can be described by constructions that we have already seen: pencils, closed logarithmic one-forms, $r$-matrices, $\PP^1$-bundles, and blow-downs (now of threefolds instead of surfaces).  Moreover, the generic Poisson structure in each component vanishes on a curve and possibly also a finite collection of isolated points.  The six components can be distinguished by the structure of these curves.
 
In fact, the paper \cite{Cerveau1996} solves a slightly different problem: it gives the classification of certain codimension-one foliations on $\PP^n$ for $n\ge 3$ under the assumption that the singular set has no divisorial components; when $n=3$, these foliations coincide with the symplectic foliations of Poisson structures whose zero sets are unions of curves and isolated points.  The paper \cite{Loray2013} gives an alternative approach to the classification of such foliations when $n=3$, and completes the classification of Poisson structures by showing that the ones with no divisorial components in their zero sets are Zariski dense in $\Pois(\PP^3)$.  It also extends the result to a classification of  Poisson structures on rank-one Fano threefolds:
\begin{theorem}[\cite{Loray2013}]\label{thm:fano3}
Let $\X$ be a Fano threefold with second Betti number $b_2(\X) = 1$.  If $\Pois(\X) \ne \{0\}$, then $\X$ is one of the following manifolds
\begin{itemize}
\item The projective space $\PP^3$
\item A quadric or cubic hypersurface $\X \subset \PP^4$
\item A degree-six hypersurface $\X \subset \PP(1,1,1,2,3)$
\item A degree four hypersurface $\X \subset \PP(1,1,1,1,2)$
\item An intersection of two quadric hypersurfaces in $\PP^5$
\item The minimal $\SL{2,\CC}$-equivariant compactification of $\SL{2,\CC}/\G$, where $\G < \SL{2,\CC}$ is the binary octahedral or icosahedral subgroup.
\end{itemize}
In each case, there is an explicit list of irreducible components of $\Pois(\X)$, described in terms of the constructions mentioned above.
\end{theorem}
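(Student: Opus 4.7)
The starting point is the identification in \autoref{sec:threefolds} of Poisson structures on a threefold $\X$ with integrable twisted one-forms: writing $\pi = \hook{\alpha}\mu$ for a local section $\mu$ of $\acan_\X$ and a one-form $\alpha$, the condition $[\pi,\pi] = 0$ becomes $\alpha \wedge d\alpha = 0$. Thus $\pi$ determines a codimension-one singular holomorphic foliation $\mathcal{F}$ whose leaves are the symplectic leaves of $\pi$, and the entire problem reduces to the classification of such foliations on Fano threefolds of Picard rank one. Since $\Pic(\X) \cong \ZZ$, the twisting line bundle in which $\alpha$ takes values is a fixed power of the ample generator, so the ``degree'' of $\mathcal{F}$ is pinned down by the Fano index of $\X$.

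My plan is to proceed in three stages. First, I would establish that the locus in $\Pois(\X)$ of Poisson structures $\pi$ whose zero set $\Zeros(\pi)$ contains no divisorial component is Zariski-open and dense in every irreducible component. Inside a family degenerating along a divisor $\D$, the hypothesis $\Pic(\X)\cong\ZZ$ forces $\D$ to be a nonnegative multiple of the hyperplane class; dividing $\alpha$ by a defining section of $\D$ and keeping track of cohomological obstructions in $\cohlgy[1]{\X,\forms[1]_\X\otimes\sL}$ for appropriate $\sL$ allows one to deform away the divisorial part of $\Zeros(\pi)$ while remaining in the same component of $\Pois(\X)$. This reduces the problem to the case where $\mathcal{F}$ has singular locus of codimension at least two.

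Next, I would invoke the Cerveau--Lins Neto-style classification~\cite{Cerveau1996,Loray2013} of codimension-one foliations with singular locus of codimension $\ge 2$ on Fano manifolds. Such foliations fall into a short list of structured models: foliations defined by closed rational one-forms (pencils as in \autoref{sec:pencils}, or logarithmic forms as in \autoref{ex:log1111}); transversely homogeneous foliations coming from classical $r$-matrices and group actions; and pull-backs under rational fibrations to lower-dimensional bases (the $\PP^1$-bundle constructions of \autoref{sec:coHiggs} and \autoref{ex:p1-over-surf}). In each case the defining data carry numerical invariants whose compatibility with the anticanonical class of $\X$, combined with Iskovskikh's classification of Picard-rank-one Fano threefolds, leaves exactly the six families listed in the statement.

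Finally, for each $\X$ on the list I would exhibit the irreducible components of $\Pois(\X)$ explicitly using the building blocks of \autoref{sec:further-constr}, and verify completeness by comparing $\dim\Pois(\X)$---computed either from a direct cohomological calculation of $\cohlgy[0]{\X,\wedge^2\tshf{\X}}$ cut out by the quadratic integrability equations, or from the foliation-theoretic classification---against the total dimension of the constructed families modulo $\Aut(\X)$. The main obstacle will be the density step of the first stage: ruling out ``exotic'' components of $\Pois(\X)$ whose generic member has positive-dimensional divisorial degeneracy and which are not obtained by pushing forward or rescaling Poisson structures in the generic components. Once density is in hand, the subsequent classification of foliations and matching to Iskovskikh's list is intricate but essentially mechanical, carried out by a careful case analysis on the Fano index.
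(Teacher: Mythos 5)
The paper does not actually prove this theorem: it is quoted from \cite{Loray2013}, and the surrounding discussion only records the strategy (reduce to codimension-one foliations via $\pi=\hook{\alpha}\mu$, invoke the classification of \cite{Cerveau1996} as extended by \cite{Loray2013}, and show that Poisson structures without divisorial components in their zero sets are Zariski dense in $\Pois(\X)$). Your outline reproduces that strategy faithfully, so at the level of architecture you are aligned with the proof in the cited literature rather than with anything carried out in this text.

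That said, the two steps you defer are exactly the hard parts, and one of them is sketched with an argument that does not work as stated. Openness of the locus of $\pi$ whose zero set has no divisorial component is easy, but density in \emph{every} irreducible component of $\Pois(\X)$ is the main new theorem of \cite{Loray2013}: a priori $\Pois(\X)$ could have an ``exotic'' component whose generic member degenerates along a divisor, and nothing in your first stage rules this out. Your proposed mechanism --- dividing $\alpha$ by a defining section $s$ of the divisorial part and tracking obstructions in $\cohlgy[1]{\X,\forms[1]_\X\otimes\sL}$ --- does not produce a deformation inside $\Pois(\X)$: the quotient $\alpha/s$ is a one-form valued in a \emph{different} line bundle, hence defines a foliation of lower degree rather than a nearby Poisson bivector, and there is no natural family interpolating between $\pi$ and a structure with the divisorial part removed. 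Second, matching the foliation classification against the Kobayashi--Ochiai/Fujita/Iskovskikh list is not ``essentially mechanical'': the index constraints only narrow the candidate manifolds, and one must still decide, case by case, which structured models (pencils, closed logarithmic one-forms, $r$-matrices, rational fibrations) actually exist on each candidate --- this is where, for instance, the $\SL{2,\CC}$-equivariant compactifications of $\SL{2,\CC}/\G$ enter, and it cannot be read off from characteristic classes alone. So your proposal is a correct roadmap, but the two deferred steps constitute the substance of \cite{Cerveau1996,Loray2013} rather than details to be filled in.
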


Fano threefolds have been completely classified; see \cite{Iskovskikh1999} for a summary.  To the author's knowledge at the time of writing, the classification of their Poisson structures is not yet complete.  It would be interesting to know whether there are any examples cannot be obtained from the constructions we have discussed.


\section{Poisson subspaces and degeneracy loci}
\label{sec:degeneracy}
We now turn our attention to the geometry of higher-dimensional Poisson structures, with an emphasis on the singularities that arise.  As we have seen, the locus where a Poisson bracket vanishes is a key feature of Poisson surfaces and threefolds.  In dimension four and higher, it is possible for the Poisson structure to have leaves of many different dimensions, and we will want to understand how they all fit together.  This typically leads to complicated singularities, so it is helpful begin with a more systematic treatment of Poisson structures on singular spaces.

\subsection{Poisson subspaces and multiderivations}
\label{sec:subspaces}

Let us briefly recall the standard notion of an analytic subspace (or subscheme) of a complex manifold; see~\cite[Section 5.3]{Griffiths1994} for more details.  If $\X$ is a complex manifold, then a (closed) complex analytic subspace of $\X$ is a closed subspace $\Y \subset \X$ that is locally cut out by a finite collection of holomorphic equations.  More precisely, there is an ideal $\cI \subset \cO_\X$ that is locally finitely generated, such that $\Y$ is the simultaneous vanishing set of all elements of $\cI$.

The holomorphic functions on $\Y$ are given by $\cO_\Y = \cO_\X/\cI$.  Thus they depend on the ideal $\cI$, rather than just the set of points underlying $\Y$.  For example, if $x$ is the standard coordinate on $\X = \CC$, the ideals $(x^k) \subset \cO_\CC$  for different values of $k$ define different analytic subspaces with same underlying set $\Y = \{0\}$; their algebras of functions are given by $\cO_\Y \cong \CC \cdot 1 \oplus \CC \cdot x \oplus\cdots \oplus \CC\cdot x^{k-1}$ with $x^k=0$.

Now suppose that $(\X,\pi)$ is a complex Poisson manifold.  An analytic subspace $\Y \subset \X$ is a \defn{Poisson subspace} if it inherits a Poisson bracket from the one on $\X$ via the restriction map $\cO_\X \to \cO_\Y$.  From the definition, we immediately have the following
\begin{lemma}
For a closed analytic subspace $\Y \subset \X$ with ideal sheaf $\cI\subset \cO_\X$, the following are equivalent:
\begin{enumerate}
\item $\Y$ is a Poisson subspace
\item $\cI$ is a Poisson ideal, i.e.~$\{\cO_\X,\cI\}\subset \cI$
\item $\cI \subset \cO_\X$ is invariant under Hamiltonian flows
\end{enumerate}
\end{lemma}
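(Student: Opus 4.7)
The plan is to establish the cycle $(1) \Rightarrow (2) \Rightarrow (3) \Rightarrow (1)$, although in fact each implication is essentially a direct unwinding of definitions, so the most economical route is to prove the pairwise equivalences $(1) \Leftrightarrow (2)$ and $(2) \Leftrightarrow (3)$ separately. Throughout, I would work locally on an open set $\U \subset \X$ so that the ideal sheaf is generated by finitely many holomorphic functions $f_1,\ldots,f_r \in \Gamma(\U,\cI)$; everything to be proved is local, so there is no loss of generality.

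For $(1) \Leftrightarrow (2)$: the bracket on $\cO_\X$ descends to a $\CC$-bilinear operation on the quotient $\cO_\Y = \cO_\X/\cI$ if and only if the bilinear map $\cO_\X \times \cO_\X \to \cO_\X/\cI$ obtained by composing $\{\cdot,\cdot\}$ with the quotient factors through $\cO_\X/\cI \times \cO_\X/\cI$. Since the bracket is skew-symmetric, this factorization is equivalent to the single condition $\{f,g\} \in \cI$ whenever $f \in \cI$ and $g \in \cO_\X$, i.e.\ $\{\cO_\X,\cI\} \subset \cI$. Once this holds, the induced operation on $\cO_\Y$ automatically inherits skew-symmetry, the Leibniz rule, and the Jacobi identity from the original bracket (each axiom is a universally quantified identity, and the quotient map is surjective), so the induced operation really is a Poisson bracket. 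This gives (1) $\Leftrightarrow$ (2).

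For $(2) \Leftrightarrow (3)$: the Hamiltonian vector field $H_g = \iota_{dg}\pi$ acts on $\cO_\X$ as the derivation $f \mapsto \{g,f\}$. Interpreting ``$\cI$ is invariant under Hamiltonian flows'' infinitesimally, as is appropriate in the holomorphic/analytic category (where flows need not be globally defined, but derivations always are), the condition is that $H_g(\cI) \subset \cI$ for every $g \in \cO_\X$; equivalently $\{g,f\} \in \cI$ for every $g \in \cO_\X$ and $f \in \cI$, which is exactly (2). For the global (finite-time) version of (3), one would additionally note that since $\cI$ is locally finitely generated, the infinitesimal statement integrates to invariance of the ideal under the actual Hamiltonian flow wherever the latter exists.

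The only real subtlety in the argument is in the formulation of (3): one needs to recognize that the correct meaning of ``invariant under Hamiltonian flows'' for an ideal on a possibly singular subspace of a holomorphic Poisson manifold is the infinitesimal one, $\lie_{H_g}\cI \subset \cI$. Once this is agreed upon, the equivalence with (2) is tautological. The remaining steps are purely formal manipulations with ideals and derivations, and I do not expect any genuine obstacle beyond checking that the induced bracket on $\cO_\Y$ in the proof of (1) $\Leftrightarrow$ (2) satisfies all three Poisson axioms, which follows from surjectivity of $\cO_\X \to \cO_\Y$ together with the corresponding axioms upstairs.
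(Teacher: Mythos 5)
Your proof is correct and follows exactly the route the paper has in mind: the paper offers no proof at all, introducing the lemma with ``From the definition, we immediately have the following,'' and your write-up is simply the careful unwinding of those definitions (the bracket descends to $\cO_\Y=\cO_\X/\cI$ iff $\{\cO_\X,\cI\}\subset\cI$, and invariance under Hamiltonian flows is the infinitesimal condition $H_g(\cI)\subset\cI$, which is the same statement). Nothing is missing; you have just made explicit what the paper treats as immediate.
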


\begin{example}
If $f \in \cO_\X$ is a Casimir function, then the level sets of $f$ are all Poisson subspaces.  Shifting $f$ by a constant, it is enough to check this for the zero level set, defined by the ideal $(f) = \cO_\X\cdot f \subset \cO_\X$.  Using $\{f,- \} = 0$, we compute
\[
\{\cO_\X,\cO_\X\cdot f\} = \{\cO_\X,\cO_\X\}f + \cO_\X\{\cO_\X,f\} = \{\cO_\X,\cO_\X\} f \subset (f),
\]
so that $(f)$ is a Poisson ideal, as required. \qed
\end{example}

\begin{exercise}\label{ex:intersects}
Show that if $\Y_1,\Y_2 \subset \X$ are Poisson subspaces with ideals $\cI_1$ and $\cI_2$, then $\cI_1 \cap \cI_2,\cI_1 \cdot \cI_2$ and $\cI_1+\cI_2$ are also Poisson ideals.  Geometrically, these operations correspond to the union, the union ``with multiplicities'' and the intersection, respectively.\qed
\end{exercise}

Notice that since a Poisson subspace $\Y\subset \X$ is invariant under all Hamiltonian flows, it is necessarily a union of symplectic leaves.  However, the converse need not hold; the main difficulty has to do with the possible existence of nilpotent elements in $\cO_\Y$.

Indeed, let us recall that if $\Y \subset \X$ is a closed analytic subspace in an arbitrary complex manifold, then the \defn{reduced  subspace $\Y_{\red} \subset \Y \subset \X$} is the unique analytic subspace of $\X$ that has the same underlying points as $\Y$, but has no nilpotent elements in its algebra of functions.  More precisely, if $\Y$ is defined by the ideal $\cI \subset \cO_\Y$, then $\Y_{\red}$ is defined by the \defn{radical ideal}
\[
\sqrt{\cI} = \set{f \in \cO_\X}{f^k \in \cI \textrm{ for some }k \in \ZZ_{>0}}
\]
and we have a natural inclusion of subspaces $\Y_{\red} \subset \Y$ corresponding to the reverse inclusion $\cI \subset \sqrt{\cI}$ of ideals.

\begin{exercise} Suppose that $Z \in \tshf{\X}$ is a vector field on $\X$ that is tangent to $\Y$, in the sense that its flow preserves the ideal $\cI$, i.e.~$Z(\cI) \subset \cI$.  Show that $Z$ is also tangent to $\Y_{\red}$.  Conclude that if  $\pi$ is a Poisson structure on $\X$ such that $\Y$ is a Poisson subspace, then $\Y_{\red}$ is also a Poisson subspace.\qed
\end{exercise}

\begin{exercise}\label{ex:red-poisson}
Equip $\X = \CC^3$ with the Poisson brackets
\begin{align}
\{x,y\} &= 2y  & \{y,z\} &= x  & \{z,x\} &= 2z \label{eqn:sl2}
\end{align}
from \autoref{ex:sl2}.  Find an analytic subspace $\Y \subset \X$ such that $\Y_{\red}$ is a Poisson subspace, but $\Y$ is not. (Hint: look for a zero-dimensional example.) \qed
\end{exercise}

\subsection{Vector fields and multiderivations}
The tangent spaces of an analytic space $\Y$ do not assemble into a vector bundle over the singular points, but we can still make sense of vector fields as derivations of functions:
\[
\tshf{\Y} = \mathrm{Der}(\cO_\Y).
\]
Clearly derivations of $\cO_\Y$ can be added together, and multiplied by elements of $\cO_\Y$, and hence they form an $\cO_\Y$-module.  This module structure is enough for many purposes in Poisson geometry; the lack of a tangent bundle is not much of an impediment.

\begin{exercise}\label{ex:sl2-der}
Recall from \autoref{ex:sl2} that the function $f = \tfrac{1}{2}x^2+2yz$ on $\CC^3$ generated the Poisson brackets \eqref{eqn:sl2}
via the Jacobian construction.  The zero set $\Y = f^{-1}(0) \subset \CC^3$  is an analytic subspace that has an isolated singular point at the origin.  Show that the Hamiltonian vector fields $H_x,H_y$ and $H_z$, together with the Euler vector field $x\cvf{x}+y\cvf{y}+z\cvf{z}$, induce derivations of $\cO_\Y$ that generate $\tshf{\Y}$ as an $\cO_\Y$-module. \qed
\end{exercise}

To formulate an analogue of the correspondence between Poisson brackets and bivectors, we recall that a \defn{multiderivation of degree $k$ on $\Y$} is a $\CC$-multilinear operator
\[
\underbrace{\cO_\Y \times \cdots \times \cO_\Y}_{k \textrm{ times}} \to \cO_\Y
\]
that is totally skew-symmetric and is a derivation in each argument.  We denote by
\[
\der[k]{\Y} = \{\textrm{multiderivations of degree }k \textrm{ on }\Y\}
\]
the sheaf of multiderivations; like $\tshf{\Y} = \der[1]{\Y}$, all these sheaves are $\cO_\Y$-modules.

  It is straightforward to define a Schouten-type bracket on multiderivations; see, e.g.~\cite[Chapter 3]{Laurent-Gengoux2013}.  In this way, we see that a Poisson bracket on $\Y$ is equivalent to a biderivation $\pi \in \der[2]{\Y}$ satisfying the integrability condition $[\pi,\pi] = 0 \in \der[3]{\Y}$.

From our experience with manifolds, it is tempting to think that a multiderivation should be the same thing as a section of  $\wedge^k\tshf{\Y}$, but this can fail when $\Y$ is singular.  More precisely, if we interpret $\wedge^k\tshf{\Y}$ as the $k$th exterior power of $\tshf{\Y}$ as an $\cO_\Y$-module, then there is a natural map $\wedge^k \tshf{\Y} \to \der[k]{\Y}$, defined by sending a wedge product $Z_1\wedge \cdots \wedge Z_k$ of vector fields to the multiderivation
\[
(f_1,\ldots,f_k) \mapsto \det(Z_i(f_j))_{1 \le i,j\le k}
\]
However, this map will typically fail to be an isomorphism at the singular points of $\Y$.  Poisson structures are always elements of $\der[2]{\Y}$, but they may not by defined by elements of $\wedge^2 \tshf{\Y}$.  Thus one should work with $\der[\bullet]{\Y}$ rather than $\wedge^\bullet \tshf{\Y}$ when doing Poisson geometry on singular spaces.

\begin{example}\label{ex:sl2-bider}
We continue \autoref{ex:sl2-der}.  Since $f$ is a Casimir function, $\Y$ is a Poisson subspace.  We claim that the corresponding biderivation $\pi \in \der[2]{\Y}$ is not in the image of the natural map $\wedge^2\tshf{\Y} \to \der[2]{\Y}$.

Indeed, by \autoref{ex:sl2-der}, $\tshf{\Y}$ is generated by derivations that vanish at the origin.  Therefore elements of $\wedge^2 \tshf{\Y}$ must vanish to order two there. But the Poisson bracket only vanishes to order one, so it cannot be given by an element of $\wedge^2\tshf{\Y}$.  More precisely, let $\fm = (x,y,z) \subset \cO_\Y$ denote the ideal of functions vanishing at the origin.  Then $Z(\cO_\Y) \subset \fm$ for any $Z \in \tshf{\Y}$.  Therefore, for any bivector $\eta \in \wedge^2\tshf{\Y}$, the corresponding multiderivation satisfies $\eta(\cO_\Y\times \cO_\Y) \subset \fm^2$.  But by definition of the Poisson bracket, we have $\pi(x,y) = 2y \notin \fm^2$.  \qed
\end{example}

\subsection{Degeneracy loci}

\subsubsection{Definition and basic properties}

We now turn to a natural class of Poisson subspaces that are key structural features of any Poisson manifold: the degeneracy loci.

Let $(\X,\pi)$ be a Poisson manifold, and let $k \ge 0$ be an integer.  The \defn{$2k$th degeneracy locus of $\X$} is the subset
\[
\Dgn{2k}(\pi) = \set{p \in \X}{\textrm{the symplectic leaf through }p\textrm{ has dimension}\le 2k}.
\]
It is a generalization of the zero locus $\Dgn{0}(\pi) = \Zeros(\pi)$.  Indeed, $\Dgn{2k}(\pi)$ is precisely the locus where the tensor $\pi$, viewed as a bilinear form on the cotangent spaces, has rank at most $2k$.  We therefore have the identification
\[
\Dgn{2k}(\pi) = \Zeros(\pi^{k+1}) \qquad \qquad \pi^{k+1} = \underbrace{\pi \wedge \cdots \wedge \pi}_{k\textrm{ times}} \in \wedge^{2k+2}\tshf{\X}
\]
In local coordinates, the locus $\Dgn{2k}(\pi)$ is the vanishing set of the Pfaffians of all $(2k+2)\times(2k+2)$ skew-symmetric submatrices of the matrix $(\{x_i,x_j\}_{i,j})$ of Poisson brackets.  In particular, it is an analytic subspace.  For a more invariant description of this ideal, we can observe that there is a natural map of $\cO_\X$-modules
\[
\xymatrix{
\forms[2k+2]_{\X} \ar[r]^-{\pi^{k+1}} & \cO_\X
}
\]
given by the pairing of polyvectors and forms.  The image of this map is a submodule in $\cO_\X$, hence an ideal; it is the ideal defining $\Dgn{2k}(\pi)$.

Thus the degeneracy loci give a filtration of $\X$ by closed analytic subspaces
\[
\Dgn{0}(\pi) \subset \Dgn{2}(\pi) \subset \Dgn{4}(\pi) \subset \cdots \subset \X.
\]
One can check that when $\Dgn{2k}(\pi) \ne \X$, the subspace $\Dgn{2k-2}(\pi) \subset \Dgn{2k}(\pi)$ is always contained in the singular locus of $\Dgn{2k}(\pi)$.  (This inclusion may or may not be an equality.)  Thus singularities are very common in the study of higher-dimensional Poisson brackets.

\begin{example}
Let $w,x,y,z$ be global coordinates on $\CC^4$, and consider the Poisson structure
\[
\pi = w\,\cvf{w}\wedge\cvf{x} + y\,\cvf{y}\wedge\cvf{z}
\]
The corresponding matrix of Poisson brackets is 
\[
\begin{pmatrix}
0 & w & 0 & 0 \\
-w & 0 & 0 & 0 \\
0 & 0 & 0 & y \\
0 & 0 & -y & 0
\end{pmatrix}
\]
Clearly $\pi$ vanishes on the plane $w=y=0$, so this plane is the degeneracy locus $\D_0(\pi)$.   Meanwhile $\D_2(\pi)$ is given by the vanishing of 
\[
\pi^2 = 2wy \, \cvf{w}\wedge\cvf{x}\wedge \cvf{y}\wedge\cvf{z}.
\]
Thus $\D_2(\pi)$ is the union of the hyperplanes $w=0$ and $y=0$, which is singular along their intersection.  So in this case, the inclusion $\Dgn{0}(\pi) \subset \Dgn{2}(\pi)_\sing$ is actually an equality. \qed
\end{example}

By definition, the locus $\D_{2k}(\pi)$ is a union of symplectic leaves, which suggests the following
\begin{proposition}[{\cite[Corollary 2.4]{Polishchuk1997}}]
The degeneracy locus $\Dgn{2k}(\pi)$ is always a Poisson subspace of $\X$.
\end{proposition}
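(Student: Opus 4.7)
The plan is to show that the defining ideal $\cI_{2k} \subset \cO_\X$ of $\Dgn{2k}(\pi)$ is a Poisson ideal, i.e.~$\{\cO_\X, \cI_{2k}\} \subset \cI_{2k}$, using the invariance of $\pi$ under Hamiltonian flows.

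First I would recall that $\cI_{2k}$ is, by the description given just before the statement, generated (as an $\cO_\X$-module, locally) by contractions $\abrac{\omega,\pi^{k+1}}$ for $\omega \in \forms[2k+2]_\X$. Since a Poisson ideal is closed under taking linear combinations with coefficients in $\cO_\X$ and under Hamiltonian derivations (which are $\CC$-linear derivations), it suffices to verify that $\{f, \abrac{\omega,\pi^{k+1}}\} \in \cI_{2k}$ for every local function $f$ and every local $(2k+2)$-form $\omega$.

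The key step is the standard identity $\lie_{H_f}\pi = 0$, which is equivalent to the Jacobi identity $[\pi,\pi]=0$ (via the formula $\lie_{H_f}\pi = [H_f,\pi] = -\hook{df}[\pi,\pi]$). Consequently $\lie_{H_f}\pi^{k+1} = 0$. Then using that $H_f$ acts as a derivation on the pairing between forms and polyvectors, I would compute
\[
\{f, \abrac{\omega,\pi^{k+1}}\} = \lie_{H_f}\abrac{\omega,\pi^{k+1}} = \abrac{\lie_{H_f}\omega,\pi^{k+1}} + \abrac{\omega,\lie_{H_f}\pi^{k+1}} = \abrac{\lie_{H_f}\omega,\pi^{k+1}},
\]
which is again a contraction of $\pi^{k+1}$ with a $(2k+2)$-form, and hence lies in $\cI_{2k}$. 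This proves the inclusion $\{\cO_\X,\cI_{2k}\}\subset \cI_{2k}$, so $\Dgn{2k}(\pi)$ is a Poisson subspace.

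I do not expect any serious obstacle here; the only subtlety is making sure that one works with the ideal $\cI_{2k}$ as defined by the image of the contraction map (rather than, say, the radical ideal of set-theoretic points where the rank drops), so that the computation above genuinely verifies the Poisson ideal condition on generators. The argument is then sheaf-local and works uniformly, without choosing coordinates or invoking the explicit Pfaffian generators.
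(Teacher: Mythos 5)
Your argument is correct and is essentially identical to the paper's proof: both show that the ideal generated by contractions $\abrac{\pi^{k+1},\omega}$ is preserved by Hamiltonian derivations, using $\lie_{H_f}\pi=0$ and the Leibniz rule for the pairing so that only the term $\abrac{\pi^{k+1},\lie_{H_f}\omega}$ survives. Your closing remark about working with the image ideal rather than its radical is exactly the right point of care, and matches the paper's setup.
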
 

\begin{proof}
Let $\cI$ be the ideal defining $\D_{2k}(\pi)$ as above.  We need to show that $\cI$ is preserved by the flow of any Hamiltonian vector field $H_g$ for $g \in \cO_\X$.  But if $f \in \cI$, then $f$ is locally given by a pairing
\[
f = \abrac{ \pi^{k+1} , \omega }
\]
where $\omega \in \forms[2k+2]_\X$.  Therefore
\begin{align*}
\{g,f\} &= \lie_{H_g} f \\
&= \lie_{H_g}\abrac{ \pi^{k+1} , \omega } \\
&= \abrac{ \lie_{H_g}(\pi^{k+1}) , \omega } + \abrac{\pi^{k+1} ,  \lie_{H_g} \omega } \\
&= \abrac{ (k+1)\pi^k \wedge \lie_{H_g}\pi , \omega } + \abrac{\pi^{k+1} ,  \lie_{H_g} \omega } \\
&=  \abrac{\pi^{k+1} ,  \lie_{H_g} \omega } \\
&\in \cI
\end{align*}
where we have used the fact $\lie_{H_g} \pi = 0$.
\end{proof}

\subsubsection{Dimensions of degeneracy loci}

The study of degeneracy loci of vector bundle maps (also known as determinantal varieties) is a classical subject in algebraic geometry, and quite a lot is known about them. We recall some basic facts about degeneracy loci of skew forms; see, e.g. \cite{Harris1984a,Jozefiak1979}, for more details.

If $\cE$ is a vector bundle of rank $n$ and
\[
\rho \in \Gamma(\X,\wedge^2\cE)
\]
is a skew-symmetric form, then we may define its degeneracy loci $\Dgn{2k}(\rho)$ as we did for Poisson structures above.  In this setting, there is a bound on the codimension
\begin{align}
\codim\, \Dgn{2k}(\rho) \le {n -2k \choose 2}, \label{eqn:codim-bound}
\end{align}
which can be understood intuitively as follows.  Suppose $p \in \X$ is a point where the rank of $\rho$ is equal to $2k$.  Then near $p$ we can choose collections of sections $e_1,\ldots,e_k$, $f_1,\ldots,f_k$ and $g_1,\ldots,g_{n-2k}$ that give a basis for $\cE$ in which $\rho$ takes the form
\[
\rho = \sum_{j=1}^k e_j \wedge f_j + \sum_{1 \le i < j \le n-2k} h_{ij}g_i\wedge g_j
\]
where $h_{ij}$ are functions that vanish at $p$.  It is clear that the rank of $\rho$ drops to $2k$ precisely where the functions $h_{ij}$ vanish.  Since there are $n-2k$ of these we get the expected bound on the codimension.  With more effort, one can upgrade this argument to work at points $p \in \D_{2k}(\rho)$ where $\rank\, \rho < 2k$.

When the bound \eqref{eqn:codim-bound} is an equality one has relatively good control over the singularities of $\Dgn{2k}(\rho)$ that can occur, as well as formulae for their fundamental classes in the cohomology of $\X$. But when the codimension is smaller than the bound, we are in the situation of \defn{excess intersection}, and it is rather more difficult to control what is happening.

One of the interesting features of Poisson structures is that excess intersection is quite common.  To see why this must be true, notice that if $p$ is a point where the rank of $\pi$ is equal to $2k$, then the entire symplectic leaf through $p$ clearly lies in $\Dgn{2k}(\pi)$.  Hence the codimension of $\Dgn{2k}(\pi)$ in a neighbourhood of $p$ at most $n-2k$, which is rather less than \eqref{eqn:codim-bound} would suggest.  On the other hand, there are also examples where
\[
\codim \, \D_{2k}(\pi) > n-2k.
\]
In these situations there are no leaves of dimension $2k$, but there are leaves of dimension less than $2k$.  Here is an example when $k=1$:

\begin{exercise}
Let $\pi$ be the linear Poisson structure associated with the Lie algebra $\sln{3,\CC}$.  Show that $\Dgn{2}(\pi) = \Dgn{0}(\pi) = \{0\}$ as sets.  Note that $\Dgn{2}(\pi)$ is not reduced in this case, so the ideals are different. \qed
\end{exercise}

There is an intriguing conjecture concerning the excess dimension of degeneracy loci on compact Poisson varieties.
\begin{conjecture}[Bondal~\cite{Bondal1993}]
Suppose that $\X$ is a Fano manifold and $\pi$ is a Poisson structure on $\X$.  Then for every $k$ such that $0 \le 2k \le \dim \X$, the degeneracy locus $\D_{2k}(\pi)$ has an irreducible component of dimension $\ge 2k+1$.
\end{conjecture}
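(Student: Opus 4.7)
The plan is to interpret $\Dgn{2k}(\pi)$ as the vanishing locus of the section $\pi^{k+1}\in\Gamma(\X,\wedge^{2k+2}\tshf{\X})$, and then to exploit the canonical identification
\[
\wedge^{2k+2}\tshf{\X}\;\cong\;\forms[n-2k-2]_\X\otimes\acan_\X \qquad (n=\dim\X),
\]
which recasts the problem as the vanishing of an $(n-2k-2)$-form valued in the \emph{ample} line bundle $\acan_\X$.  I would run a downward induction on the expected corank, anchored at $2k+2=n$ with $n$ even: there $\pi^{k+1}$ is a section of $\acan_\X$, so either it is identically zero and $\Dgn{2k}(\pi)=\X$, or it cuts out an effective anticanonical divisor of dimension $n-1=2k+1$.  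A small adjustment (using $\pi^{k+1}$ as a section of $\forms[1]_\X\otimes\acan_\X$) handles the analogous top case in odd dimension, giving the base of the induction.

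For the inductive step, suppose a component $\Y\subset\Dgn{2k+2}(\pi)$ of dimension $\geq 2k+3$ has been produced.  Since $\Y$ is a Poisson subspace, $\pi$ restricts to a Poisson structure on $\Y_\red$ of generic rank at most $2k+2$, so that the generic leaf of $\pi|_\Y$ has codimension at most one at a smooth point of $\Y$.  I would then analyse the contraction map
\[
\forms[n-2k-2]_\X|_\Y \;\longrightarrow\; \acan_\X|_\Y
\]
induced by $\pi^{k+1}$: quotienting by the kernel (the directions tangent to the generic leaves of $\pi|_\Y$) reduces it to a section of a twist of $\acan_\X|_\Y$ whose scheme-theoretic vanishing locus on $\Y$ equals $\Dgn{2k}(\pi)\cap\Y$.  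A Porteous-type codimension estimate, combined with positivity input inherited from $\acan_\X$, would then show this locus has codimension at most $2$ in $\Y$, producing a component of $\Dgn{2k}(\pi)$ of dimension $\geq 2k+1$.

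The hard part is precisely that last codimension estimate.  The Fano hypothesis on $\X$ does not descend to an ampleness statement for $\acan_\X|_\Y$ in any direct way; $\Y$ is typically very singular as a scheme, so excess intersection is already rampant for $\Dgn{2k+2}(\pi)$; and the classical determinantal bound $\binom{n-2k}{2}$ on the codimension of a generic skew-form degeneracy locus is wildly wrong here, because symplectic leaves of small dimension automatically inflate $\Dgn{2k}(\pi)$ beyond what naive dimension counts predict.  Any honest proof must simultaneously exploit the integrability condition $[\pi,\pi]=0$ and the global positivity of $c_1(\X)$, and I expect this interaction to be the genuine obstacle: indeed, the conjecture remains open in general, and a complete solution likely requires a direct cohomological formula for $[\Dgn{2k}(\pi)]$ in terms of Chern classes of $\tshf{\X}$ whose positivity can be extracted from $c_1(\X)>0$ without passing through the singular subvariety $\Y$.
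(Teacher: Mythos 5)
This statement is Bondal's \emph{conjecture}: the paper offers no proof of it, and explicitly records that it is open in general, with only partial results known (the Beauville--Polishchuk theorem for the maximal rank-drop locus, proved via Bott's vanishing theorem, and the Gualtieri--Pym theorem for $\Dgn{2d-4}(\pi)$ on Fano manifolds of dimension $2d$, proved by relating the Poisson geometry of the canonical bundle to the singularities of $\Dgn{2d-2}(\pi)$). So there is no proof in the paper against which your argument could be favourably compared, and your proposal does not close the gap either --- as you yourself concede in the final paragraph.

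Concretely, the argument breaks at the inductive step, and earlier than you suggest. Even the very first descent --- from $\Dgn{2k+2}(\pi)=\X$ (when $2k+2$ is the generic rank of $\pi$) down to $\Dgn{2k}(\pi)$ --- is not a Porteous-type count: the naive determinantal bound gives codimension up to $\binom{n-2k}{2}$, and the known proof that the codimension is in fact at most $n-2k-1$ is the Beauville--Polishchuk argument, which needs Bott's obstruction to regular foliations together with the hypothesis $c_1(\X)^{n-2k-1}\ne 0$; no contraction of $\pi^{k+1}$ against an ample twist of $\forms[n-2k-2]_\X$ reproduces this. For the subsequent steps, where $\Y$ is a proper component of $\Dgn{2k+2}(\pi)$, the situation is worse: $\Y$ is in general non-reduced and singular, $\tshf{\Y}$ is only a sheaf of derivations rather than a bundle (so $\pi|_\Y$ lives in $\der[2]{\Y}$ and need not come from $\wedge^2\tshf{\Y}$, as in \autoref{ex:sl2-bider}), the ``quotient by the kernel'' you propose is not a vector bundle map to which Porteous applies, and no positivity of $\acan_\X|_{\Y}$ relevant to the estimate is available. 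The claimed codimension-$\le 2$ bound on $\Dgn{2k}(\pi)\cap\Y$ inside $\Y$ is precisely the content of the conjecture at that level, so the argument is circular at the point where it matters. Your closing diagnosis --- that a proof must couple the integrability $[\pi,\pi]=0$ with the global positivity $c_1(\X)>0$ in a way not yet understood --- is accurate, but it is a description of the open problem, not a proof.
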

Beauville~\cite{Beauville2011} has suggested that the Fano condition could likely be considerably relaxed.  

The author is unaware of any counterexamples to the conjecture.  Indeed, it has been verified in some special cases.  The first main result is the following
\begin{theorem}[{\cite[Proposition 4]{Beauville2011},\cite[Corollary 9.2]{Polishchuk1997}}]
Let $(\X,\pi)$ be a compact Poisson manifold.  Suppose that $\rank(\pi) = 2k+2$ on an open dense set, and that
\[
c_1(\X)^{n-2k-1} \ne 0.
\]
Then $\Dgn{2k}(\pi)$ has a component of dimension at least $2k+1$. 
\end{theorem}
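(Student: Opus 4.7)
The plan is to argue by contradiction: suppose every irreducible component of $\Dgn{2k}(\pi)$ has dimension at most $2k$, so $\Dgn{2k}(\pi)$ has complex codimension at least $n-2k$ in $\X$, where $n=\dim\X$. Since the generic rank of $\pi$ is $2k+2\le n$, this codimension is at least $2$, and in particular $\Dgn{2k}(\pi)$ contains no divisorial components.

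The strategy is to localize to the complement $\U:=\X\setminus\Dgn{2k}(\pi)$, where $\pi$ is regular of rank $2k+2$ and defines a holomorphic symplectic foliation. The image $\mathcal{F}:=\pi^\sharp(\forms[1]_\U)\subset\tshf{\U}$ is an integrable holomorphic subbundle of rank $2k+2$. At each point of $\U$, the element $\pi^{k+1}$ is a nonzero decomposable $(2k+2)$-vector lying in $\wedge^{2k+2}\mathcal{F}=\det\mathcal{F}$, so $\pi^{k+1}|_\U$ is a nowhere-vanishing global section of $\det\mathcal{F}$. Hence $c_1(\mathcal{F})=0$, and from the short exact sequence $0\to\mathcal{F}\to\tshf{\U}\to\mathcal{N}\to 0$ with $\rank\mathcal{N}=n-2k-2$, I deduce $c_1(\X)|_\U=c_1(\mathcal{N})$.

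Next, I would apply Bott's vanishing theorem for holomorphic foliations: every monomial in the Chern classes of the normal bundle $\mathcal{N}$ of total degree exceeding $\rank\mathcal{N}=n-2k-2$ vanishes in $\cohlgy{\U,\CC}$. In particular $c_1(\mathcal{N})^{n-2k-1}=0$, so $c_1(\X)^{n-2k-1}$ restricts to zero on $\U$. To propagate this to $\X$, I would invoke the long exact sequence of the pair $(\X,\U)$: since $\Dgn{2k}(\pi)$ is a closed analytic subset of complex codimension at least $n-2k$, one has $\cohlgy[i]{\X,\U;\CC}=0$ for $i<2(n-2k)$, so the restriction map $\cohlgy[2(n-2k-1)]{\X,\CC}\to\cohlgy[2(n-2k-1)]{\U,\CC}$ is injective. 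Therefore $c_1(\X)^{n-2k-1}=0$, contradicting the hypothesis.

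The main obstacle I anticipate is the first step, namely verifying that $\pi^{k+1}|_\U$ genuinely trivializes $\det\mathcal{F}$. This reduces to the pointwise linear-algebra fact that a skew form of rank exactly $2k+2$ satisfies $\omega^{k+1}=(k+1)!\,e_1\wedge f_1\wedge\cdots\wedge e_{k+1}\wedge f_{k+1}$ in any symplectic basis of its image and hence spans the top exterior power of that image. Once this identification, together with the integrability of the symplectic foliation on $\U$, is in place, both Bott vanishing and the Gysin comparison are standard.
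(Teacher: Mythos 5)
Your argument is correct and is precisely the proof the paper has in mind: the paper does not spell out the details but states that the result is ``a straightforward application of Bott's theorem,'' and your chain (trivialization of $\det\mathcal{F}$ by $\pi^{k+1}$ on the regular locus, Bott vanishing for the normal bundle of the symplectic foliation, and injectivity of restriction to the complement of a codimension $\ge n-2k$ analytic set) is exactly the argument in the cited references of Beauville and Polishchuk. No gaps; the pointwise linear-algebra step you flag is indeed the standard fact that $\omega^{k+1}$ spans $\det(\mathrm{im}\,\omega)$ for a skew form of rank $2k+2$.
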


In other words, the conjecture always holds for the rank drop locus of maximal dimension.  This implies, in particular, that Bondal's conjecture holds for Fano threefolds.  (The three-dimensional case could also be deduced as a consequence of Druel's classification~\autoref{thm:druel}, but that result is much harder to prove.) The proof is a straightforward application of Bott's theorem~\cite{Bott1972}, which gives a topological obstruction to the existence of a regular foliation on a manifold, in terms of Chern classes.

The second main result implies that the conjecture holds for Fano fourfolds:
\begin{theorem}[\cite{Gualtieri2013a}]\label{thm:gp}
Let $(\X,\pi)$ be a Fano manifold of dimension $n=2d$.  Then $\Dgn{2d-2}(\pi)$ has a component of dimension at least $n-1$ and $\Dgn{2d-4}(\pi)$ has a component of dimension at least $2d-3$.
\end{theorem}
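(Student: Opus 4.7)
\noindent The first assertion is immediate.  The top wedge $\pi^d$ is a global holomorphic section of $\acan_\X = \wedge^{2d}\tshf{\X}$, so either $\pi^d\equiv 0$, in which case $\Dgn{2d-2}(\pi) = \X$ has dimension $n>n-1$, or $\pi^d$ is a nonzero section of the ample line bundle $\acan_\X$ and $\Dgn{2d-2}(\pi)=\Zeros(\pi^d)$ is an effective divisor of pure dimension $n-1$.

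For the second assertion I would proceed by cases on the generic rank $r$ of $\pi$.  The case $r\le 2d-4$ is trivial, since then $\pi^{d-1}\equiv 0$ and $\Dgn{2d-4}(\pi) = \X$.  In the remaining cases, the main tools will be Bott's vanishing theorem for holomorphic foliations combined with the Fano positivity of $c_1(\X)$.

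Suppose first that $r=2d-2$.  Then $\pi$ defines a singular codimension-two holomorphic foliation $\F$ on $\X$ whose singular locus is precisely $\Dgn{2d-4}(\pi)$.  The subsheaf $\tshf{\F}\subset\tshf{\X}$ is the saturated image of $\pi^{\sharp}\colon \cotshf{\X}\to \tshf{\X}$, and $\pi^{d-1}$ trivialises $\det\tshf{\F}$ on the regular locus.  Since line bundles extend uniquely across subsets of codimension at least two, this gives $\det\tshf{\F}\cong \cO_\X$ on all of $\X$; from the exact sequence $0\to\tshf{\F}\to\tshf{\X}\to\N\to 0$ we then deduce $c_1(\N)=c_1(\X)$.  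Bott's vanishing theorem applied to $\F$ on the regular locus $\X\setminus\Dgn{2d-4}(\pi)$ yields $c_1(\N)^3 = 0$ in $H^6(\X\setminus\Dgn{2d-4}(\pi),\mathbb{R})$.  Assuming for contradiction that every irreducible component of $\Dgn{2d-4}(\pi)$ has complex codimension at least four in $\X$, the Thom--Gysin sequence shows that the restriction $H^6(\X,\mathbb{R})\to H^6(\X\setminus\Dgn{2d-4}(\pi),\mathbb{R})$ is injective, so $c_1(\X)^3 = 0$ globally, contradicting the ampleness of $c_1(\X)$.  Hence some component of $\Dgn{2d-4}(\pi)$ has dimension at least $n-3$.

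Finally, suppose $r=2d$, so that $\D:=\Dgn{2d-2}(\pi)$ is a nonempty ample anticanonical divisor.  Fix an irreducible component $\D_0\subseteq\D$.  If $\pi|_{\D_0}\equiv 0$ then $\D_0\subseteq\Dgn{2d-4}(\pi)$ is already a component of dimension $n-1\ge n-3$.  Otherwise $\pi|_{\D_0}$ has generic rank $2d-2$, and by adjunction $\D_0$ is Calabi--Yau with $\acan_{\D_0}\cong\cO_{\D_0}$; the canonical isomorphism $\wedge^2\tshf{\D_0}\cong\forms[n-3]_{\D_0}$ then identifies $\pi|_{\D_0}$ with a nonzero global section of $\forms[n-3]_{\D_0}$.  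When $\D_0$ is smooth, Lefschetz hyperplane together with Kodaira vanishing on the Fano manifold $\X$ give $h^{n-3,0}(\D_0) = h^{n-3,0}(\X) = 0$, forcing this section to vanish, a contradiction.  For singular $\D_0$, one would extend $\pi|_{\D_0}$ across the codimension-at-least-two singular locus as a reflexive multiderivation, or pull back to a resolution, and argue similarly using vanishing theorems for reflexive differentials; in either case one concludes that the zero set of $\pi|_{\D_0}$, which lies inside $\Dgn{2d-4}(\pi)$, has codimension at most two inside $\D_0$, hence dimension at least $n-3$ in $\X$.

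The main obstacle I foresee is this last step, where $\D_0$ is singular: the Lefschetz--Kodaira argument is clean only when $\D_0$ is smooth, and extracting the bound in general requires careful Hodge-theoretic or reflexive-sheaf techniques to control sections of $\forms[n-3]_{\D_0}$ on the singular Calabi--Yau hypersurfaces that actually arise.
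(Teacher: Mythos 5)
Your first assertion and the first two rank cases of the second are fine, but note that they do not require anything new: the case of generic rank $2d-2$ is exactly the Beauville--Polishchuk result quoted immediately before \autoref{thm:gp} in the text (Bott vanishing for the induced codimension-two foliation plus $c_1(\X)^3\neq 0$), and the generically symplectic case $r=2d$ is precisely the one the paper flags as the point where ``Bott's vanishing theorem cannot be directly applied'' and where the cited proof instead uses the Poisson module structure on the canonical bundle to relate $\Dgn{2d-4}(\pi)$ to the singularities of the divisor $\Dgn{2d-2}(\pi)$ (compare \autoref{thm:dsing}). So the entire content of the theorem lives in your last case, and that is where your argument breaks.

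The concrete error is the adjunction step. Only the full degeneracy divisor $\D=\Zeros(\pi^d)$ is anticanonical; a single irreducible component $\D_0$ satisfies $\acan_{\D_0}\cong\cO_\X(\D-\D_0)|_{\D_0}$, which is trivial only when $\D$ is irreducible and reduced. In general $\D$ has several components and may have multiplicities, so $\D_0$ is not Calabi--Yau and $\pi|_{\D_0}$ does not correspond to an $(n-3)$-form. The toric example of \autoref{thm:onlyPn} already kills the argument: on $\PP^{2d}$ with $\pi=\sum_{i<j}\lambda_{ij}(x_i\cvf{x_i})\wedge(x_j\cvf{x_j})$, each component $\D_0\cong\PP^{2d-1}$ of the degeneracy divisor is smooth, $\pi|_{\D_0}$ is a nonzero Poisson structure of generic corank one on $\D_0$, and no contradiction with $h^{n-3,0}=0$ arises because $\acan_{\D_0}\cong\cO_{\PP^{2d-1}}(2d)$ is far from trivial. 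Your dichotomy would force $\pi|_{\D_0}\equiv 0$ for every smooth component, which is false. Even in the one situation where adjunction does give triviality ($\D$ irreducible and reduced), the components that actually occur are singular in essentially all examples, and you yourself acknowledge that the Lefschetz--Kodaira step is then only a hope about reflexive differentials, not an argument. So the case that carries the theorem is unproved; a correct treatment has to exploit the interaction between $\pi$, the line bundle $\acan_\X$ as a Poisson module, and the singular locus of $\D$, rather than intrinsic vanishing theorems on the components.
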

In this case Bott's vanishing theorem cannot be directly applied.  The proof is based on some new tools that relate the Poisson geometry of the canonical bundle to the singularities of the subspace $\Dgn{2d-2}(\pi) \subset \X$.

Another piece of evidence for the conjecture comes from the fact that the adjunction procedure described in \autoref{sec:adjunction} has a natural generalization to the higher degeneracy loci~\cite{Gualtieri2013a}.   Taking the derivative of $\pi^{k+1}$ along its zero locus, and applying the contraction $\cotshf{\X}\otimes\wedge^{2k+2}\tshf{\X} \to \wedge^{2k+1}\tshf{\X}$, one produces a \defn{modular residue} that is a multiderivation of degree $2k+1$ on $\Dgn{2k}(\pi)$.  This residue is nonzero in many examples, which hints at a possible deeper geometric explanation for the dimensions appearing in the conjecture.

\section{Log symplectic structures}
\label{sec:log-symp}
As the dimension of $\X$ increases, the singularities of the degeneracy loci typically become much more complicated.  To get some control over the situation, it is helpful to make some simplifying assumptions.  In the past several years, starting with the works \cite{Goto2002,Guillemin2014,Radko2002}, considerable attention has been devoted to the case of $C^\infty$ and holomorphic Poisson structures that have an open dense symplectic leaf, and degenerate in a simple way along a hypersurface.  In this final section, we give an introduction to these ``log symplectic'' manifolds, and describe some of the classification results that have been obtained. 

\subsection{Definition and examples}
Suppose that $(\X,\pi)$ is a Poisson manifold that is generically symplectic, i.e.~it has an open dense symplectic leaf. Then the dimension $n = \dim \X$ is necessarily even, and the maximal degeneracy locus $\D = \Dgn{n-2}(\pi) \subset \X$ is described by the vanishing of the top power
\[
\pi^{n/2} \in \Gamma(\X,\acan_\X),
\]
i.e.~it is an anticanonical divisor in $\X$ when it is nonempty.

\begin{definition}
A generically symplectic Poisson manifold $(\X,\pi)$ is \defn{log symplectic} if the anticanonical divisor $\D$ is reduced, i.e.~all components have multiplicity one. 
\end{definition}
When we want to emphasize the degeneracy divisor, we will say that the triple $(\X,\D,\pi)$ is a log symplectic manifold.
 
\begin{exercise}\label{ex:poisson-cpts}
Let $(\X,\pi)$ be a log symplectic manifold.  Show that every irreducible component of the degeneracy divisor $\D$ is a Poisson subspace of $\X$.  \qed
\end{exercise}

\begin{remark}
In fact, a theorem of Seidenberg~\cite{Seidenberg1967} implies that the irreducible components of any analytic space or scheme $\Y$ are automatically preserved by any vector field on $\Y$.  This implies that the irreducible components of any Poisson space are always Poisson subspaces. \qed
\end{remark}

The reason for the terminology ``log symplectic'' is as follows.  Consider the subsheaf
\[
\tshf{\X}(-\log \D) \subset \tshf{\X}
\]
consisting of all vector fields tangent to $\D$.  Near a smooth point $p \in \D$, we can choose local coordinates $x_1,\ldots,x_{n}$ on $\X$ such that $\D = \{x_1 = 0\}$.  We then have a basis
\[
\tshf{\X}(-\log \D) \cong  \abrac{ x_1\cvf{x_1}, \cvf{x_2},\ldots,\cvf{x_{n}}}
\]
for $\tshf{\X}(\log \D)$ as an $\cO_\X$-module.  In other words, away from the singular locus, $\tshf{\X}(-\log \D)$ is the sheaf of sections of a vector bundle.  (In general, it may fail to be a vector bundle over the singular locus, but it is the next best thing: a reflexive sheaf, meaning that it is isomorphic to its double dual.)

The dual of $\tshf{\X}(-\log \D)$ is the sheaf $\forms[1]_\X(\log \D)$ of differential one-forms with logarithmic singularities along $\D$ in the sense of \cite{Deligne1970,Saito1980}.  In coordinates near a smooth point as above, we have a basis
\[
\forms[1]_\X(\log \D) \cong \abrac{ \tfrac{dx_1}{x_1}, dx_2,\ldots,dx_{n}}
\]
and the term ``logarithmic'' comes from the fact that $\tfrac{dx_1}{x_1} = d \log x_1$.

One can show that the Poisson structure $\pi$ is log symplectic if and only if the isomorphism $\tshf{\X\setminus\D} \cong \forms[1]_{\X\setminus\D}$ defined by the symplectic structure on $\X\setminus\D$ extends to an isomorphism
\[
 \tshf{\X}(-\log \D) \cong \forms[1]_{\X}(\log \D)
\]
of $\cO_\X$-modules, even over the singular points of $\D$.  What this means is that we can invert $\pi$ to obtain a logarithmic two-form
\[
\omega \in \forms[2]_\X(\log \D)
\]
which is called the \defn{log symplectic form}.  Near a smooth point of $\D$, one can find log Darboux coordinates $(p_i,q_i)_{i=1,\ldots,n/2}$ so that
\begin{align}
\omega = \frac{dp_1}{p_1}\wedge dq_1 + dp_2\wedge dq_2 + \cdots + dp_{n/2}\wedge dq_{n/2}; \label{eqn:log-Darboux}
\end{align}
see \cite{Goto2002,Guillemin2014}.  Near a singular point, though, the structure can be much more complicated.

We recall now several examples of log symplectic manifolds, some of which are taken from \cite{Goto2002}.
\begin{example}[Surfaces]
For $\X = \PP^2$, most Poisson structures are log symplectic; the only cases that are not log symplectic are cases (h) and (i) in \autoref{fig:cubic}, which have components with multiplicity greater than one.  In contrast, the Poisson ruled surfaces obtained from compactified cotangent bundles (\autoref{sec:cotan-compact}) are never log symplectic, since the section at infinity has multiplicity two.\qed
\end{example}

\begin{example}[Products]
If $(\X,\D,\pi)$ and $(\X',\D',\pi')$ are log symplectic, then so is the product $(\X\times \X', \D\times\X'+\X\times\D',\pi+\pi')$. \qed
\end{example}

\begin{example}[Hilbert schemes]
If $(\X,\pi)$  is a log symplectic surface, then by the previous example $\X^n$ carries a log symplectic structure that is invariant under permutations of the factors.  It therefore descends to a Poisson structure on the singular variety $\X^n/\bS_n$, and this Poisson structure lifts to a log symplectic structure on the natural resolution of singularities---the Hilbert scheme $\Hilb^n(\X)$.  See \cite{Ran2016,Ran2017} for a discussion of these Poisson structures and a proof that they have unobstructed deformations.  We remark that a similar construction works when $(\X,\pi)$ is replaced with certain \emph{noncommutative} surfaces; see \cite{Nevins2007,Rains2016}. \qed
\end{example}

\begin{example}[Monopoles]
The reduced moduli space of $\SU{2}$-monopoles of charge $k$ is a $2k$-dimensional holomorphic symplectic manifold.  It has a natural compactification to a log symplectic structure on $\PP^{k-1}\times \PP^{k-1}$. \qed
\end{example}

\begin{example}[Bundles on elliptic curves]\label{ex:FO}
In \cite{Feigin1989,Feigin1998}, Feigin and Odesskii introduced families of Poisson structures $q_{n,r}(\Y)$ on $\PP^{n-1}$, determined by an elliptic curve $\Y$ and coprime integers $(n,r)$.   These structures are obtained by viewing $\PP^{n-1}$ as a certain moduli space of bundles over $\Y$, and they exhibit some beautiful classical geometry related to the embedding of $\Y$ as an elliptic normal curve in $\PP^{n-1}$. For example, the Poisson structure $q_{2d+1,1}$ on $\PP^{2d}$ is log symplectic, and for any $k \ge 0$, the degeneracy locus  $\Dgn{2k}$ is the union of all of the $k$-planes in $\PP^{n-1}$ that meet $\Y$ in at least $k+1$ points (the secant $k$-planes).\qed 
\end{example}

\begin{example}[Lie algebras]
If $\g$ is a Lie algebra, then $\g^\vee$ is a Poisson manifold, so it makes sense to ask whether $\g^\vee$ is log symplectic.  There are several Lie algebras for which this is the case; examples include:
\begin{itemize}
\item the Lie algebra $\aff{n} = \gln{n,\CC}\ltimes \CC^n$ of affine motions of $\CC^n$
\item the Borel subalgebras in the symplectic Lie algebras $\spn{2n,\CC}$ for $n \ge 1$
\item the logarithmic cotangent bundles of linear free divisors~\cite{Buchweitz2006,Granger2009}.
\end{itemize}
For any Lie algebra $\g$, the symplectic leaves in $\g^\vee$ are the coadjoint orbits.  Thus a necessary condition for $\g^\vee$ to be log symplectic is that it has an open coadjoint orbit (so that it is a so-called \defn{Frobenius Lie algebra}).  But this condition is not sufficient, since we also need the degeneracy divisor to be reduced.  Frobenius Lie algebras up to dimension six have been classified~\cite{Csikos2007}, and consulting the classification we see that most of them do not have log symplectic duals.  Those that do fall into the list of examples above. \qed
\end{example}

In almost all of these examples of log symplectic manifolds, the degeneracy divisor is highly singular.  Indeed, it seems that log symplectic manifolds with smooth degeneracy hypersurfaces are quite rare.  For example, this is impossible for Fano manifolds by \autoref{thm:gp}.  There is also a simple topological obstruction:
\begin{theorem}[\cite{Gualtieri2013a}]\label{thm:dsing}
Let $(\X,\D,\pi)$ be a log symplectic manifold.  If $\D$ is singular, then its singular locus has codimension at most three in $\X$.  When the codimension is equal to three, the singular locus is Gorenstein, and its fundamental class is
\begin{align*}
[\D_{\sing}] = c_1c_2-c_3 \in \cohlgy[6]{\X,\ZZ}
\end{align*}
\end{theorem}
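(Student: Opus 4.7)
The plan is to exploit the log-symplectic isomorphism $\pi^\sharp\colon \forms[1]_\X(\log\D)\xrightarrow{\sim}\tshf{\X}(-\log\D)$ together with a Pfaffian presentation of $\D_{\sing}$ coming from the second jet of $\pi$. Since $\D\in|\acan_\X|$, taking determinants in the isomorphism gives $\det\tshf{\X}(-\log\D)\cong\cO_\X$, so the inclusion of log vector fields fits into the short exact sequence
\begin{equation*}
0\to\tshf{\X}(-\log\D)\to\tshf{\X}\to\N\to 0
\end{equation*}
with $c_1(\tshf{\X}(-\log\D))=0$, forcing $c_1(\N)=c_1(\X)=[\D]$ in $\cohlgy[2]{\X,\ZZ}$. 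The sheaf $\N$ is supported on $\D$, and its stalk rank jumps precisely on $\D_{\sing}$, making $\N$ the natural object to Chern-class compute with.

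For the codimension bound and the Gorenstein property, I would work locally near a point $p\in\D_{\sing}$. Writing $\pi^{n/2}=f\mu$ in a local trivialization of $\acan_\X$ so that $\D=V(f)$, both $f(p)=0$ and $df|_p=0$ hold. Using the Jacobi identity $[\pi,\pi]=0$, the leading term of $\pi$ at $p$ pairs with the Hessian of $f$ to produce a skew-symmetric $(2k+1)\times(2k+1)$ matrix (for some $k\ge 1$) whose maximal Pfaffians generate the ideal of $\D_{\sing}$ in a neighbourhood of $p$. By the Buchsbaum--Eisenbud structure theorem, any ideal cut out by the maximal Pfaffians of an odd-sized skew matrix has codimension at most three, and when the codimension is exactly three the quotient ring is Gorenstein; globalizing gives $\codim\D_{\sing}\le 3$ together with Gorenstein-ness on equidimensional components of the extremal locus.

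To compute the fundamental class in $\cohlgy[6]{\X,\ZZ}$, I would apply multiplicativity of total Chern classes to the short exact sequence, combined with $c_1(\tshf{\X}(-\log\D))=0$. Expanding $c(\tshf{\X})=c(\tshf{\X}(-\log\D))\cdot c(\N)$ and isolating the degree-six component, the piece of $c_3(\N)$ supported on the codimension-three part of $\D_{\sing}$ (as computed from the Pfaffian resolution of the previous step, which determines $c_2$ and $c_3$ of $\tshf{\X}(-\log\D)$ in terms of cycle classes on $\D$ and $\D_{\sing}$) contributes exactly $[\D_{\sing}]$. Substituting $[\D]=c_1$ and collecting terms then yields the clean expression $[\D_{\sing}]=c_1c_2-c_3$.

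The hard part will be the middle step: producing the Pfaffian presentation of the ideal of $\D_{\sing}$ uniformly near arbitrary singular points of $\D$, with no a priori hypotheses such as simple normal crossings. This requires a careful use of the log-symplectic isomorphism combined with the Jacobi identity to organize the higher jets of $\pi$ along $\D$ into the skew data demanded by Buchsbaum--Eisenbud, and it is essentially the Poisson-theoretic content of the theorem: it simultaneously explains why the codimension bound is exactly three (the sharp bound for Pfaffian Gorenstein ideals) and why the fundamental class admits such a uniform Chern-class expression.
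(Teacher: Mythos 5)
First, a point of reference: this paper does not prove \autoref{thm:dsing} at all --- it is quoted from \cite{Gualtieri2013a} --- so there is no internal proof to compare against; the only hint the text gives is that the cited arguments ``relate the Poisson geometry of the canonical bundle to the singularities'' of the top degeneracy locus. Judged on its own terms, your proposal correctly locates the circle of ideas the real proof lives in (sub-maximal Pfaffians of an odd skew matrix, the Buchsbaum--Eisenbud characterisation of codimension-three Gorenstein ideals, a Chern-class evaluation of the resulting degeneracy class), but the step carrying all the content is asserted rather than performed, and the mechanism you propose for it is not right as stated. The ideal of $\D_{\sing}$ is the Jacobian ideal $(f,\partial_1 f,\dots,\partial_n f)$, built from the \emph{first} jet of $f$; ``pairing the leading term of $\pi$ with the Hessian of $f$'' produces a symmetric second-order object that does not see this ideal, and at points where $\D$ has non-isolated singularities it carries no information about the embedded scheme structure. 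What is actually needed is a \emph{global} skew form on an odd-rank bundle whose ideal of maximal Pfaffians is the Jacobian ideal of $f$; there are $n+1$ natural generators $(f,\partial_1 f,\dots,\partial_n f)$, matching the $n+1$ maximal Pfaffians of an $(n+1)\times(n+1)$ skew matrix, and the right rank-$(n+1)$ bundle is the first jet bundle (equivalently the Atiyah algebroid) of the canonical line bundle, with the skew form supplied by the Poisson module structure of $\can_\X$. This is precisely where the Jacobi identity and the reducedness of $\D$ --- which your sketch never invokes, although it is part of the definition of log symplectic and the statement fails without it --- enter the argument. Once that object is constructed, the codimension bound, the Gorenstein property in the extremal case, and the class formula all follow from the standard theory of Pfaffian degeneracy loci (the sub-maximal locus of a skew form on an odd-rank bundle has expected codimension $\binom{3}{2}=3$); without it, none of the three conclusions is established.

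The Chern-class step also has an independent circularity. The sheaf $\tshf{\X}(-\log\D)$ is only reflexive, and it fails to be locally free exactly along $\D_{\sing}$ --- the locus you are trying to measure --- so you cannot apply Whitney's formula to $0\to\tshf{\X}(-\log\D)\to\tshf{\X}\to\N\to 0$ and ``isolate'' $c_3(\N)$ without already possessing a locally free resolution of $\N$ near $\D_{\sing}$. That resolution is the Buchsbaum--Eisenbud Pfaffian complex you have deferred, so as written the computation assumes its own hardest input; you acknowledge as much when you say $c_2$ and $c_3$ are ``computed from the Pfaffian resolution of the previous step.'' (The degree-two observation, that $c_1(\tshf{\X}(-\log\D))=0$ because $\D$ is anticanonical, is correct and is consistent with \autoref{prop:chern-sinh}, but it determines nothing in degree six.) In short: the plan points at the right theorem-shaped hole, but the bridge from the log symplectic structure to a global odd Pfaffian presentation of the Jacobian ideal --- the actual content of \cite{Gualtieri2013a} --- is missing.
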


\subsection{Some classification results}
One reason for focusing on log symplectic structures is that they play an important role in the classification of Poisson brackets on compact complex manifolds.  The point is that small deformations of a log symplectic structure remain log symplectic. Hence for a given compact complex manifold $\X$, the log symplectic structures on $\X$ form a Zariski open set $\LogSymp(\X) \subset \Pois(\X)$.  As a result, the closure of $\LogSymp(\X)$ is a union of irreducible components of $\Pois(\X)$, and we may try to understand these components as a first step towards understanding the full classification of Poisson structures on $\X$.

This direction of research is quite recent, but there are already some nontrivial results.  The typical strategy is to make some natural constraints on the singularities of the hypersurface $\D$, and classify the log symplectic structures that have the given singularity type.  

The simplest case is when $\D$ is the union of smooth components meeting transversely (simple normal crossings).   Such pairs $(\X,\D)$ are the output one obtains when one attempts to resolve the singularities of an arbitrary divisor.  In other words, the singularities of a simple normal crossing divisor cannot be resolved any further; they are, in some sense, the minimally singular examples.  Since the only singularities come from intersections of the components, this case has a combinatorial flavour and one can obtain some fairly good control over the geometry.

Even-dimensional toric varieties give many examples of simple normal crossings log symplectic manifolds, via the $r$-matrix construction for group actions mentioned in \autoref{sec:further-constr}.   The main classification result, due to R.~Lima and J.~V.~Pereira, states that beyond dimension two, these are the only examples amongst Fano manifolds with cyclic Picard group:
\begin{theorem}[\cite{Lima2014}] \label{thm:onlyPn}
Let $(\X,\D,\pi)$ be a log symplectic Fano manifold of dimension $n \ge 4$, with second Betti number $b_2(\X) = 1$.  Suppose that $\D$ is a simple normal crossings divisor.  Then $\X \cong \PP^n$, the divisor $\D$ is the union of the coordinate hyperplanes, and the Poisson structure is induced by an $r$-matrix for the standard torus action.  It therefore takes the following form in standard affine coordinates $x_1,\ldots,x_n$:
\[
\pi = \sum_{i<j} \lambda_{ij} (x_i \cvf{x_i}) \wedge (x_j\cvf{x_j})
\]
where $(\lambda_{ij})_{i,j} \in \CC$ is a nondegenerate skew-symmetric matrix.
\end{theorem}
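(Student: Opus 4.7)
The plan is to combine the constraints imposed by the log symplectic structure with the Kobayashi--Ochiai characterization of projective space. First, I observe that $\pi^{n/2}$ is a section of $\acan_\X$ whose divisor of zeros is the reduced divisor $\D$, so $\D$ lies in the anticanonical linear system. Decomposing $\D = \D_1 + \cdots + \D_k$ into smooth irreducible components (meeting transversally by the SNC hypothesis) and writing $\Pic(\X) = \ZZ\cdot H$ for the ample generator $H$ (since $b_2(\X) = 1$), each component has class $\D_i = a_i H$ with $a_i \in \ZZ_{>0}$, and the Fano index $r$ defined by $c_1(\X) = rH$ satisfies $r = \sum_i a_i \geq k$. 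By the Kobayashi--Ochiai theorem, $r \leq n+1$ with equality if and only if $\X \cong \PP^n$, so the proof reduces to showing $r = n+1$. Moreover, each $\D_i$ is a Poisson subspace (by \autoref{ex:poisson-cpts}), and by adjunction $c_1(\D_i) = \sum_{j\neq i}\D_j|_{\D_i}$ is a sum of restrictions of ample classes, so each $\D_i$ is Fano; for $n \geq 4$ the Lefschetz hyperplane theorem gives $b_2(\D_i) = 1$.

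The heart of the argument is to force $r = n+1$, which I would do by induction on $n$ using the modular foliation along each $\D_i$. Since $\dim\D_i = n-1$ is odd, the restriction $\pi|_{\D_i}$ is not log symplectic, but the log Darboux form \eqref{eqn:log-Darboux} shows that it has generic corank one and admits a canonical modular vector field $Z_i$ spanning its kernel, which is tangent to $\D_i\cap\D_j$ for every $j\neq i$. Taking a transversal slice to $Z_i$ produces an $(n-2)$-dimensional log symplectic pair with $k-1$ SNC components, to which the induction hypothesis applies, yielding $(\PP^{n-2}, \text{coordinate hyperplanes})$. Reconstructing $\D_i$ by flowing along $Z_i$ over this slice forces $\D_i \cong \PP^{n-1}$ with $a_i = 1$ for every $i$, so $k = n+1$ and $r = n+1$; Kobayashi--Ochiai then yields $\X \cong \PP^n$, and a projective change of coordinates brings $\D$ to the union of the $n+1$ coordinate hyperplanes.

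The classification of $\pi$ is then a direct generalization of \autoref{ex:log1111}: lifting $\omega$ to $\CC^{n+1}$ in homogeneous coordinates, closedness together with first-order poles along $\D$ force
\[
\omega = \sum_{0 \leq i < j \leq n} \lambda_{ij}\,\frac{dX_i}{X_i}\wedge\frac{dX_j}{X_j},
\]
and the condition that the Euler vector field lie in the kernel of $\omega$ (equivalently, $\sum_j \lambda_{ij} = 0$ for each $i$) is exactly what is needed for $\omega$ to descend from $\CC^{n+1}$ to $\PP^n$. Dualizing through the log symplectic isomorphism and restricting to an affine chart yields the asserted $r$-matrix expression for $\pi$.

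The hardest step is the induction in the second paragraph. Making the passage to the transversal quotient by the modular flow rigorous at the global level is delicate: the flow of $Z_i$ need not have an algebraic quotient, and one must work with local transversals and then glue the lower-dimensional classification back to a statement about $\D_i$ itself while verifying that all hypotheses (cyclic Picard, SNC degeneracy, existence of a genuine log symplectic descent) actually transfer. An alternative route, perhaps closer in spirit to the original paper, would extract $n$ global logarithmic vector fields directly from the residues of $\omega$, use the identity $c_1(\tshf{\X}(-\log \D)) = 0$ to conclude that the log tangent bundle is trivial, and then invoke the characterization of smooth projective toric pairs by triviality of the log tangent bundle, combined with Kleinschmidt's classification of Picard-rank-one smooth projective toric varieties, to reach $\X \cong \PP^n$ without an explicit induction.
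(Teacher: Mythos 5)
Your framing of the endgame is sound (the divisor is anticanonical, the number of components bounds the index from below, Kobayashi--Ochiai pins down $\PP^n$, and the final $r$-matrix form follows once $\D$ is the coordinate arrangement), but the step you yourself flag as the heart of the argument has a genuine gap that cannot be repaired in the form you propose. Your induction passes through an \emph{odd}-dimensional Poisson subvariety $\D_i$ and then through a ``transversal slice'' to the modular vector field $Z_i$. There is no such global slice, and no algebraic (or even analytic) quotient by the flow of $Z_i$: already in the model answer, where $\X=\PP^n$ and $\pi$ is a torus $r$-matrix, the kernel foliation of $\pi|_{\D_i}$ is spanned by a one-parameter subgroup of the torus whose orbits are, for generic $\lambda_{ij}$, non-algebraic and dense in the torus orbits. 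So ``reconstructing $\D_i$ by flowing along $Z_i$ over the slice'' has no content, and the component count does not close up either: your slice would carry $k-1$ components while the coordinate arrangement in $\PP^{n-2}$ has $n-1$ of them, which gives $k=n$ rather than the required $k=n+1$. The paper's induction (\autoref{prop:many-cpts}) avoids all of this by descending instead to the \emph{codimension-two} stratum $\W=\D_1\cap\D_2$, which by \autoref{lem:bires} is a genuine even-dimensional log symplectic Fano manifold with $b_2(\W)=1$ (Lefschetz) and simple normal crossings degeneracy divisor $\W\cap(\D-\D_1-\D_2)$; to get the induction started one needs the residue theorem applied to $\Res_{\D_1}\omega$, together with $\Gamma(\D_1,\forms[1]_{\D_1})=0$, to produce two further components meeting $\D_1$ with nonzero biresidue.

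The second missing ingredient is that a lower bound on the index alone does not finish the proof: it leaves the quadric and the five index-$(n-1)$ Fanos on the table, and your proposal never rules these out (your claimed direct proof that $r=n+1$ rests on the broken induction). The paper disposes of them, and simultaneously forces all $d_i=1$ on $\PP^n$, via the Chern character identity \eqref{eqn:chern-sinh} of \autoref{prop:chern-sinh}: the components $\ch_3,\ch_5,\dots$ give a system of Diophantine equations in the degrees $d_i$ that is inconsistent for every candidate other than $\PP^n$ with $n+1$ hyperplanes (\autoref{prop:toricPn}). Your alternative closing suggestion also does not work as stated: the residues of $\omega$ are functions on the components $\D_j$, not global sections of $\tshf{\X}(-\log\D)$, and the local vector fields $\omega^{-1}(d\log f_j)$ do not glue because the classes $[\D_j]$ are nontrivial; triviality of the log tangent bundle is a \emph{consequence} of first identifying $(\X,\D)$ as $(\PP^n,\text{coordinate hyperplanes})$, not a route to it.
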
 
The original proof of this result consisted of an inductive argument that reduced the problem to the  classification of Poisson structures on Fano threefolds (\autoref{thm:fano3}) by repeatedly intersecting the irreducible components of $\D$.  In the final section of these notes, we give a new proof, which is more self-contained.  It is based on a similar inductive process, but the steps are simplified using some natural constraints on the characteristic classes of $(\X,\D)$.

The theorem suggests that the simple normal crossings condition is quite restrictive.  Indeed, most of the natural examples described in the previous section are not of this type, although one can sometimes (but not always) resolve the singularities to get to the simple normal crossings case; see, e.g.~\cite{Ran2017}. To make further progress on the classification, we must confront the singularities of the degeneracy divisor.  The author's paper \cite{Pym2016} developed some cohomological techniques for constraining the singularities of $\D$ and producing normal forms for the Poisson brackets near a singular point.  It explained that when the singular locus has codimension three (the maximum possible by \autoref{thm:dsing}), the local geometry is generically governed by an elliptic curve.  The main classification result on such ``elliptic'' structures is the following.
\begin{theorem}[\cite{Pym2016}]\label{thm:elliptic}
Suppose that $\pi$ is a log symplectic structure on $\PP^4$ whose degeneracy divisor $\D$ has the following properties:
\begin{itemize}
\item  the singular locus of $\D$ has codimension three in $\PP^4$; and
\item the modular residue is nonvanishing on the zero locus of $\pi$.
\end{itemize}
Then $\D$ is the secant variety of an elliptic normal curve $\Y \subset \PP^4$, and $\pi$ belongs to the family $q_{5,1}(\Y)$ described by Feigin and Odesskii (\autoref{ex:FO}). 
\end{theorem}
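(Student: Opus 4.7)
The plan is to identify in succession the reduced singular curve $\D_{\sing,\red}$, then the hypersurface $\D$, and finally the Poisson tensor $\pi$ itself, exploiting the cohomology of $\PP^4$ at each stage.

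First I would compute the fundamental class of $\D_{\sing}$. Applying \autoref{thm:dsing} with $\X = \PP^4$ and $c(\PP^4) = (1+h)^5 = 1 + 5h + 10h^2 + 10h^3 + 5h^4$ gives
\[
[\D_{\sing}] \;=\; c_1 c_2 - c_3 \;=\; 50\,h^3 - 10\,h^3 \;=\; 40\,h^3 \;\in\; \cohlgy[6]{\PP^4,\ZZ},
\]
so $\D_{\sing}$ is a Gorenstein curve of arithmetic degree $40$. In parallel, the modular residue construction produces on the zero locus $\Zeros(\pi) = \Dgn{0}(\pi) \subseteq \D_{\sing}$ (the inclusion follows from the log Darboux form \eqref{eqn:log-Darboux}, which guarantees $\rank\pi = 2$ at any smooth point of $\D$) a holomorphic vector field that is nowhere zero by hypothesis. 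A compact connected complex curve carrying a nowhere-vanishing holomorphic vector field has trivial tangent bundle and is therefore a smooth elliptic curve.

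Second, I would pin down the scheme structure by invoking the elliptic local normal form of \cite{Pym2016}: the non-vanishing modular residue forces the germ of $\pi$ at a generic point of $\Zeros(\pi)_{\red}$ into the standard ``elliptic'' model, from which one reads off both that $\D_{\sing,\red} = \Zeros(\pi)_{\red}$ and that $\D_{\sing}$ carries scheme-theoretic multiplicity~$8$ along that reduced curve. Combined with the class computation this gives $\deg \D_{\sing,\red} = 40/8 = 5$. A smooth elliptic curve of degree~$5$ that is linearly non-degenerate in $\PP^4$ is an elliptic normal curve; a linearly degenerate component would sit in some $\PP^{3}$, where a standard degree/genus count forbids the remaining components from balancing the Gorenstein class of $\D_{\sing}$. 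Hence $\D_{\sing,\red}$ is a single elliptic normal curve $\Y \subset \PP^4$.

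Third, I would recover $\D$ and then $\pi$. Since $\pi^{2} \in \Gamma(\PP^4, \acan_{\PP^4}) = \Gamma(\PP^4, \cO(5))$, the divisor $\D$ is a quintic threefold, and the local elliptic form shows that $\D$ is scheme-theoretically double along $\Y$. Classically, the secant variety $\mathrm{Sec}(\Y)$ is the unique quintic threefold in $\PP^4$ that is double along an elliptic normal curve (a one-line cohomological computation gives $h^0(\PP^4,\cI_\Y^{2}(5)) = 1$), so $\D = \mathrm{Sec}(\Y)$. Finally, the Feigin--Odesskii family $q_{5,1}(\Y)$ of \autoref{ex:FO} already produces log symplectic structures on $\PP^4$ with degeneracy divisor $\mathrm{Sec}(\Y)$, and the rigidity statement of \cite{Pym2016} asserts that any log symplectic structure with this degeneracy divisor and non-vanishing modular residue is determined up to the parameters of $q_{5,1}(\Y)$ by its first-order behaviour along $\Y$, forcing $\pi \in q_{5,1}(\Y)$. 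The main obstacle I anticipate is the thickening step---extracting the multiplicity $8$ from the sole hypothesis of a non-vanishing modular residue---which rests on the formal-neighbourhood cohomological machinery of \cite{Pym2016}; once this local picture is secured, the remaining steps reduce to classical projective geometry of elliptic normal curves together with a routine rigidity argument.
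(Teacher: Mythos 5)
The first thing to note is that the paper does not prove this theorem: it is stated as a survey item quoted from \cite{Pym2016}, with an explicit remark that the techniques (formal-neighbourhood cohomology, local normal forms near the codimension-three stratum) live in that reference. So there is no in-paper argument to compare yours against line by line. Measured against the actual proof in \cite{Pym2016}, your roadmap is the right one: compute $[\D_{\sing}] = c_1c_2 - c_3 = 40\,H^3$ from \autoref{thm:dsing}; use the non-vanishing modular residue to force $\Zeros(\pi)_{\red}$ to be a smooth curve with trivial tangent bundle, hence elliptic; extract the transverse multiplicity of the Gorenstein scheme structure to get $\deg \Y = 40/8 = 5$; identify $\D$ as the unique quintic threefold double along an elliptic normal quintic, i.e.\ its secant variety; and close with a rigidity argument. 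The multiplicity $8$ is indeed correct --- the transverse slice is a simple elliptic surface singularity of type $\widetilde{E}_6$ (cone over a plane cubic), whose Tjurina number is $8$ --- and the arithmetic $50H^3 - 10H^3 = 40H^3$ checks out.

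As a proof, however, the proposal is essentially a citation loop: the two steps that carry all the weight are deferred to \cite{Pym2016}, which is the theorem being proved. These are (i) the derivation of the elliptic local normal form from the sole hypothesis of a non-vanishing modular residue --- this is what simultaneously yields the multiplicity $8$, the equality $\D_{\sing,\red} = \Zeros(\pi)_{\red}$, and the fact that $\D$ is double along $\Y$ --- and (ii) the rigidity statement that any log symplectic structure degenerating on $\mathrm{Sec}(\Y)$ lies in the family $q_{5,1}(\Y)$. You flag (i) as the main obstacle, correctly, but (ii) is not ``routine'': in the reference it requires a genuine computation with the formal neighbourhood of $\Y$ and the identification of the resulting structures with the Feigin--Odesskii brackets. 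A smaller but real gap: your exclusion of a linearly degenerate component by ``a standard degree/genus count'' is too quick, since smooth elliptic quintics in $\PP^3$ do exist; linear non-degeneracy of $\Y$ in $\PP^4$ has to be argued (e.g.\ by showing no quintic threefold can be double along a degenerate such curve while matching the class $40H^3$). In short: the outline is faithful to the source and the numerology is right, but the substantive content of the theorem is assumed rather than proved.
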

The paper also rules out the existence of such log symplectic structures on some other simple Fano fourfolds.  It is not clear how to extend the approach to higher dimensions: in the elliptic case, $\D$ has only one irreducible component, so the inductive approach used for \autoref{thm:onlyPn} does not work here. 

The Poisson structures described in \autoref{thm:onlyPn} and \autoref{thm:elliptic} sweep out Zariski open sets in the space $\Pois(\PP^n)$ of Poisson structures, and hence they give irreducible components.  Some other examples of log symplectic structures on projective space are known~\cite[Section 7.5]{Pym2013}, but the full classification is, at present, unresolved.

\subsection{The simple normal crossings case}

We now describe some basic structural facts about log symplectic manifolds with simple normal crossings degeneracy divisors, leading to a proof of \autoref{thm:onlyPn}.

\subsubsection{Topological constraints from residue theory}

Suppose that $\D \subset \X$ is a simple normal crossings divisor.
Near a point $p \in \D$ where exactly $k$ components meet, the transversality of the components implies that we may find coordinates $x_1,\ldots,x_n$ such that $\D$ is the vanishing locus of the product $x_1\cdots x_k$.  In these coordinates, $\D$ is simply the union of the coordinate hyperplanes $\D_j = \{x_j=0\}$ for $1 \le j \le k$.  Moreover, there are natural $\cO_\X$-module bases
\begin{align}
\begin{aligned}
\tshf{\X}(-\log \D) &\cong \abrac{x_1\cvf{x_1},\ldots,x_k\cvf{x_k},\cvf{x_{k+1}},\ldots,\cvf{x_n}}   \\
\forms[1]_\X(\log \D) &\cong \abrac{ \frac{dx_1}{x_1},\ldots,\frac{dx_k}{x_k},dx_{k+1},\ldots,dx_n}. 
\end{aligned}\label{eqn:tlog-basis}
\end{align}
So in this case, these sheaves are vector bundles, even at the singular points, and a log symplectic form is an element of $\forms[2]_\X(\log\D) = \wedge^2 \forms[1]_\X(\log \D)$.

The existence of a log symplectic structure puts strong constraints on the topology of the pair $(\X,\D)$.  To see this, we shall use a bit of residue theory.  We recall from \cite{Deligne1970,Saito1980} that for each component $\D_j$, there is a natural $\cO_\X$-linear residue map
\[
\forms[1]_\X(\log \D) \to \cO_{\D_j}
\]
which sends $\frac{dx_j}{x_j}$ to the function $1 \in \cO_{\D_j}$, and sends all other basis elements to zero.
\begin{exercise}
Verify that the residue map is independent of the chosen coordinates. \qed
\end{exercise}

Notice that the holomorphic forms $\forms[1]_\X$ are naturally a submodule in $\forms[1]_{\X}(\log \D)$; they are the elements whose residue on every irreducible component vanishes.  In this way we obtain the residue exact sequence
\begin{align}
\xymatrix{
0 \ar[r] & \forms[1]_\X \ar[r] & \forms[1]_\X(\log \D) \ar[r] &\bigoplus_{j=1}^k \cO_{\D_j} \ar[r] & 0. 
}\label{eqn:residue-seq}
\end{align}
On the other hand, if we denote by $\cO_\X(-\D_j) \subset \cO_\X$ the ideal of functions vanishing on $\D_j$, then we have the exact sequence
\begin{align}
\xymatrix{
0 \ar[r] & \cO_\X(-\D_j) \ar[r] & \cO_\X \ar[r] & \cO_{\D_j} \ar[r] & 0
}\label{eqn:ideal-seq}
\end{align}
Since $\D_j$ is locally defined by a single equation, $\cO_\X(-\D_j)$ is a locally free $\cO_\X$-module of rank one (a line bundle).  We denote by
\[
[\D_j] = -c_1(\cO_\X(-\D_j)) \in \cohlgy[2]{\X,\ZZ}
\]
the first Chern class of its dual.  When $\X$ is compact, this class is the Poincar\'e dual of the fundamental class of $\X$.  We can now prove the following statement.
\begin{proposition}\label{prop:chern-sinh}
Suppose that the simple normal crossings divisor $\D\subset \X$ is the degeneracy divisor of a log symplectic Poisson structure.  Then the following identity holds in the cohomology ring of $\X$:
\begin{align}
\ch(\tshf{\X})-\ch(\cotshf{\X}) = 2\sum_{j=1}^k \sinh\, [\D_j], \label{eqn:chern-sinh}
\end{align}
where $\ch(\tshf{\X})$ and $\ch(\cotshf{\X})$ are the Chern characters of the tangent and cotangent bundles.
\end{proposition}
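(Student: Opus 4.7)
The plan is to exploit the fact that the log symplectic form provides an $\cO_\X$-linear isomorphism $\tshf{\X}(-\log\D)\cong\forms[1]_\X(\log\D)$ and then to compute the Chern characters of both sides in terms of the components $\D_j$, by fitting each sheaf into a short exact sequence comparing it with $\tshf{\X}$ or $\cotshf{\X}$. Since the Chern character is additive on short exact sequences of coherent sheaves, everything will reduce to knowing the classes of $\cO_{\D_j}$ and of the conormal/normal sheaves of $\D_j$.

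First I would invoke the log symplectic condition: contraction with $\pi$ extends to an isomorphism $\pi^\sharp:\forms[1]_\X(\log\D)\xrightarrow{\sim}\tshf{\X}(-\log\D)$ (this is the content of the local log Darboux model \eqref{eqn:log-Darboux}; away from $\D$ it is immediate, and across $\D$ it follows from the explicit bases \eqref{eqn:tlog-basis}, since the reducedness of $\D$ is exactly what forces $\pi$ to be a unit multiple of $\prod x_j\,\partial_{x_1}\wedge\cdots\wedge\partial_{x_n}$ in suitable coordinates). Consequently $\ch(\tshf{\X}(-\log\D))=\ch(\forms[1]_\X(\log\D))$.

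Next I would compute each side. For the cotangent side, the residue sequence \eqref{eqn:residue-seq} and the structure sheaf sequence \eqref{eqn:ideal-seq} give
\[
\ch(\forms[1]_\X(\log\D))=\ch(\cotshf{\X})+\sum_{j=1}^{k}\ch(\cO_{\D_j})=\ch(\cotshf{\X})+\sum_{j=1}^{k}\bigl(1-e^{-[\D_j]}\bigr).
\]
For the tangent side, I would use the dual exact sequence
\[
\xymatrix{
0\ar[r]&\tshf{\X}(-\log\D)\ar[r]&\tshf{\X}\ar[r]&\bigoplus_{j=1}^{k}\cN_{\D_j/\X}\ar[r]&0,
}
\]
whose local exactness can be checked in the bases \eqref{eqn:tlog-basis}: a vector field $v$ is tangent to every $\D_j$ iff its normal components $v|_{\D_j}\bmod\tshf{\D_j}$ vanish, and the quotient of $\tshf{\X}$ by $\tshf{\X}(-\log\D)$ decomposes componentwise precisely because the $\D_j$ intersect transversely. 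Combining this with the sequence $0\to\cO_\X\to\cO_\X(\D_j)\to\cN_{\D_j/\X}\to0$ gives $\ch(\cN_{\D_j/\X})=e^{[\D_j]}-1$, so
\[
\ch(\tshf{\X}(-\log\D))=\ch(\tshf{\X})-\sum_{j=1}^{k}\bigl(e^{[\D_j]}-1\bigr).
\]

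Finally I would equate the two expressions and rearrange:
\[
\ch(\tshf{\X})-\ch(\cotshf{\X})=\sum_{j=1}^{k}\bigl(e^{[\D_j]}-1\bigr)+\sum_{j=1}^{k}\bigl(1-e^{-[\D_j]}\bigr)=\sum_{j=1}^{k}\bigl(e^{[\D_j]}-e^{-[\D_j]}\bigr)=2\sum_{j=1}^{k}\sinh[\D_j].
\]
The only nontrivial step is the verification of the tangent-sheaf exact sequence at the deepest strata of $\D$, where several components meet; but this is a direct check in the local coordinates where $\D=\{x_1\cdots x_k=0\}$, so I do not expect a real obstacle. Everything else is bookkeeping with standard Chern character identities.
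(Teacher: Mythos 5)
Your proposal is correct and follows essentially the same route as the paper: both rest on the additivity of the Chern character applied to the residue sequence \eqref{eqn:residue-seq} together with the ideal sequence \eqref{eqn:ideal-seq}, combined with the log symplectic isomorphism $\tshf{\X}(-\log\D)\cong\forms[1]_\X(\log\D)$ (which you make explicit, while the paper uses it implicitly from the preceding discussion). The only cosmetic difference is that the paper obtains the tangent-side identity $\ch(\tshf{\X})=\ch(\tshf{\X}(-\log\D))+\sum_j(e^{[\D_j]}-1)$ by dualizing the cotangent-side one (flipping the signs of the odd Chern character components), whereas you derive it directly from the normal bundle sequence $0\to\tshf{\X}(-\log\D)\to\tshf{\X}\to\bigoplus_j\cN_{\D_j/\X}\to0$; both are valid.
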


\begin{proof}
By definition of the Chern character, we have $\ch(\cO(-\D_j)) = e^{-[\D_j]}$.  Using the additivity of the Chern character on the exact sequences \eqref{eqn:residue-seq} and \eqref{eqn:ideal-seq}, we obtain the identity
\begin{align}
\ch(\cotshf{\X}) = \ch(\forms[1]_\X(\log \D)) + \sum_{j=1}^k(e^{-[\D_j]}-1) \label{eqn:chern1}
\end{align}
Now the Chern character of a dual bundle is obtained by switching the signs of the odd components $\ch_1,\ch_3,\ch_5,\ldots$.  Therefore
\begin{align}
\ch(\tshf{\X}) = \ch(\tshf{\X}(-\log \D)) + \sum_{j=1}^k(e^{[\D_j]}-1). \label{eqn:chern2}
\end{align}
and the result follows by subtracting \eqref{eqn:chern1} from \eqref{eqn:chern2}.
\end{proof}

\begin{remark}
Formula \eqref{eqn:chern-sinh} is an equation in the even cohomology $\cohlgy[2\bullet]{\X,\ZZ}$, but the projections of both sides to $\cohlgy[4\bullet]{\X,\ZZ}$ are zero.  Hence the nontrivial conditions lie in the summands $\cohlgy[2j]{\X,\ZZ}$ where $j$ is odd, i.e.~they concern the components $\ch_1,\ch_3,\ch_5,\ldots$ of the Chern character. \qed
\end{remark}

This result is already sufficient to classify simple normal crossings log symplectic structures on projective space:
\begin{proposition}\label{prop:toricPn}
Let $\pi$ be a log symplectic structure on $\PP^n$ for $n > 2$, with simple normal crossings degeneracy divisor $\D$.  Then $\D$ is projectively equivalent to the union of the coordinate hyperplanes, and the Poisson structure is induced by an $r$-matrix as in \autoref{thm:onlyPn}.
\end{proposition}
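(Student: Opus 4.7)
The plan is to feed the Chern-character identity of Proposition \ref{prop:chern-sinh} into the cohomology ring $\cohlgy[\bullet]{\PP^n,\QQ} = \QQ[H]/(H^{n+1})$ of projective space and show that the degeneracy divisor is forced to be a sum of $n+1$ hyperplanes, then argue that the log tangent bundle is trivial to extract the explicit form of $\pi$.

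First, I would apply \autoref{prop:chern-sinh} to $(\PP^n,\D,\pi)$. Using the Euler sequence $0 \to \cO \to \cO(1)^{n+1}\to \tshf{\PP^n}\to 0$, we have $\ch(\tshf{\PP^n}) = (n+1)e^H - 1$ and dually $\ch(\cotshf{\PP^n}) = (n+1)e^{-H}-1$, so the left-hand side is $2(n+1)\sinh H$. Writing $[\D_j] = d_j H$ with $d_j \in \ZZ_{\ge 1}$, the identity becomes
\[
(n+1)\sinh H \;=\; \sum_{j=1}^k \sinh(d_j H) \qquad \text{in } \QQ[H]/(H^{n+1}).
\]
Note that since $\pi$ is log symplectic, $n$ is even, so the assumption $n > 2$ actually means $n \ge 4$, and in particular the coefficients of $H$ and $H^3$ carry independent information. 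Reading off these coefficients yields $\sum d_j = n+1$ and $\sum d_j^3 = n+1$. Since $d_j^3 \ge d_j$ for $d_j \in \ZZ_{\ge 1}$ with equality iff $d_j = 1$, we conclude that $k = n+1$ and every $d_j = 1$. Thus $\D$ is a sum of $n+1$ distinct hyperplanes, and the simple normal crossings hypothesis means they are in general position; a standard $\PGL{n+1}$-transformation sends them to the coordinate hyperplanes $\{x_0 = 0\}, \ldots, \{x_n = 0\}$.

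Next I would show that $\tshf{\PP^n}(-\log \D)$ is a trivial rank-$n$ bundle.  The standard torus $T = (\CC^*)^{n+1}/\CC^* \cong (\CC^*)^n$ acts on $\PP^n$ preserving $\D$, yielding a morphism $\ft\otimes \cO_{\PP^n} \to \tshf{\PP^n}(-\log \D)$ of rank-$n$ vector bundles which is fibrewise an isomorphism on the open orbit. Since $\D$ is an anticanonical divisor, $\det \tshf{\PP^n}(-\log \D) \cong \acan_{\PP^n}\otimes \cO(-\D) \cong \cO$, so the determinant of this morphism is a nowhere-vanishing section of $\cO$, hence a nonzero constant, forcing the map to be an isomorphism. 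In the standard affine chart with coordinates $x_1,\ldots,x_n$ this identifies $\tshf{\PP^n}(-\log \D) \cong \cO^{\oplus n}$ with basis $x_i\cvf{x_i}$ for $i=1,\ldots,n$.

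Finally, since the log-Darboux form \eqref{eqn:log-Darboux} shows that log symplectic bivectors locally lie in $\wedge^2 \tshf{\X}(-\log \D)$, the Poisson structure $\pi$ is a global section of the trivial bundle $\wedge^2\tshf{\PP^n}(-\log \D) \cong \cO^{\oplus \binom{n}{2}}$, hence a constant linear combination
\[
\pi = \sum_{i<j} \lambda_{ij}\, (x_i \cvf{x_i}) \wedge (x_j \cvf{x_j}).
\]
The integrability $[\pi,\pi] = 0$ is automatic because the action fields $x_i\cvf{x_i}$ pairwise commute, and the log symplectic (nondegeneracy) condition $\pi^{n/2} \ne 0$ is equivalent to $(\lambda_{ij})$ being a nondegenerate skew matrix. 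The main technical point is really step one, extracting two independent integer constraints from the Chern-character identity; once the combinatorial rigidity forces $d_j = 1$, everything else follows essentially formally from the triviality of the log tangent bundle of $(\PP^n, \textrm{coordinate hyperplanes})$.
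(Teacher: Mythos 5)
Your proof is correct and follows essentially the same route as the paper's: feed \autoref{prop:chern-sinh} into $\ZZ[H]/(H^{n+1})$, extract the odd-degree coefficient equations $\sum d_j = \sum d_j^3 = n+1$ to force $k=n+1$ and all $d_j=1$, and then use triviality of $\tshf{\PP^n}(-\log\D)$ under the torus action to identify $\pi$ with an element of $\wedge^2\ft$. The only (welcome) refinements are that you make the Diophantine step explicit via $d_j^3\ge d_j$ where the paper says ``evidently,'' and you establish triviality of the log tangent bundle by a determinant argument rather than by exhibiting the basis $x_i\cvf{x_i}$ directly from \eqref{eqn:tlog-basis}.
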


\begin{proof}
We recall that the cohomology ring is given by
\[
\cohlgy{\PP^n,\ZZ} \cong \ZZ[H]/(H^{n+1}),
\]
where $H \in \cohlgy[2]{\PP^n,\ZZ}$ is the fundamental class of a hyperplane, and the Chern character is given by $\ch(\tshf{\PP^n}) = (n+1)e^H-1$.

Let $d_1,\ldots,d_k$ be the degrees of the irreducible components of $\D$, so that the fundamental classes are given by $[\D_j] = d_jH$.  Thus \eqref{eqn:chern-sinh} reads
\[
(n+1)e^H - (n+1)e^{-H} = \sum_{j=1}^k e^{d_j H} - \sum_{j=1}^k e^{-d_j H}.
\]
Considering the coefficients of odd powers of $H$, we obtain the equations
\begin{align*}
n+1 &=d_1 + \cdots + d_k \\
n+1 &=d_1^3 + \cdots + d_k^3  \\
    &\ \ \vdots \\
n+1 &=d_1^{n-1} + \cdots + d_k^{n-1}
\end{align*}
for the positive integers $d_1,\ldots,d_k$.  Since $n > 2$, there is more than one nontrivial equation in this list, and evidently the only simultaneous solution is given by $k = n+1$ and $d_1=d_2=\cdots=d_k = 1$.  In this way, we see that $\D$ must be the union of $n+1$ hyperplanes with normal crossings.  Such an arrangement is projectively equivalent to the coordinate hyperplanes.  

Now consider the standard action of the $n$-dimensional torus on $\PP^n$ by rescaling the coordinates.  It preserves the divisor $\D$.  In standard affine coordinates $x_1,\ldots,x_n$ the infinitesimal generators are the vector fields $x_1\cvf{x_1},\ldots,x_n\cvf{x_n}$.  From \eqref{eqn:tlog-basis}, we see that they form a global basis for $\tshf{\PP^n}(-\log \D)$.  Therefore $\tshf{\PP^n}(-\log \D)$ is a trivial vector bundle over $\PP^n$ whose fibres are canonically identified with the Lie algebra $\ft$ of the torus.  Since holomorphic sections of a trivial bundle on a compact manifold are always constant, we obtain an isomorphism of vector spaces $\Gamma(\PP^n,\wedge^2\tshf{\PP^n}(-\log \D)) \cong \wedge^2 \ft$, 
which implies that the Poisson structure comes from an $r$-matrix and has the desired form in coordinates.
\end{proof}

\begin{exercise}
Let $\X = \PP^{n_1}\times \cdots \times \PP^{n_k}$ be a product of projective spaces of dimensions $n_i > 2$, and let $\pi$ be a log symplectic structure on $\X$ with simple normal crossings degeneracy divisor.  By combining the method in the proof of \autoref{prop:chern-sinh} with K\"unneth's formula for the cohomology of a product, show that $\pi$ is induced by an $r$-matrix for the standard action of the torus $(\CC^*)^{n_1}\times \cdots \times (\CC^*)^{n_k}$ on $\X$.  \qed
\end{exercise}

\subsubsection{Intersection of components and biresidues}

Let $\D \subset \X$ be a simple normal crossings divisor.  Suppose given $k$ components $\D_1,\ldots,\D_k$ of $\D$.  Let $\W = \D_1 \cap \cdots \cap \D_k$ be their intersection, and let $\Wc \subset \W$ be the open dense subset obtained by removing the intersections with any other components of $\D$.

Notice that $\cO_{\D_j}|_\W =\cO_\W$ is the trivial bundle.  Since the residue maps on each component are $\cO_\X$-linear, they give rise to a map
\[
\forms[1]_{\X}(\log \D)|_\W \to \bigoplus_{j=1}^k \cO_{\W}
\]
of vector bundles on $\W$.  From the expressions \eqref{eqn:tlog-basis} for the generators of $\forms[1]_{\X}(\log \D)$ it is apparent that this map give rise to a canonical exact sequence
\[
\xymatrix{
0 \ar[r] &\forms[1]_{\Wc} \ar[r] & \forms[1]_\X(\log\D)|_\Wc \ar[r] & \bigoplus_{j=1}^k \cO_\Wc \ar[r] & 0.
}
\]
over the open subset $\Wc\subset \W$.

Taking exterior powers, we get an induced map
\[
\Bires_\W : \forms[2]_\X(\log \D) \to {\bigwedge}^2 \bigoplus_{j=1}^k \cO_\W
\]
called the \defn{biresidue}.  (More generally, there is a $j$-residue, defined for forms of degree $j \le k$.)  In coordinates, an element $\omega \in \forms[2]_\X(\log\D)$ may be written in the form
\[
\sum_{1 \le i<j \le k} \lambda_{ij} \frac{dx_i}{x_i}\wedge \frac{dx_j}{x_j} + \cdots
\]
where $\lambda_{ij}$ are holomorphic functions and $\cdots$ denotes terms involving at most one pole.  Then the biresidue simply extracts the skew matrix $\lambda_{ij}|_\W$.

The biresidue can alternatively be described by applying the residue map twice.  Given a component $\D_i$, the residue of $\omega$ along $\D_i$ is a one-form $\Res_{\D_i} \omega$ which has logarithmic poles along the intersection $\D_i \cap (\D\setminus \D_i)$.  Hence given another component $\D_j$, we can compute
\[
\Res_{\D_j}\Res_{\D_i}\omega \in \cO_{\D_i\cap \D_j}
\]
and the restriction of this function to $\W \subset \D_i\cap \D_j$ gives the component $\lambda_{ij}|_\W$ of the biresidue $\Bires_\W(\omega)$.

Now let us incorporate a log symplectic Poisson structure $\pi$ on $(\X,\D)$ and take $\omega = \pi^{-1}$ to be the log symplectic form.  We note that  \autoref{ex:intersects} and \autoref{ex:poisson-cpts} together imply that the intersection $\W = \D_1 \cap \cdots  \cap\D_k$ is a Poisson subspace.  Indeed, we have the following commutative diagram
\[
\xymatrix{
0 \ar[r] &\forms[1]_{\W} \ar[r]\ar[d]_{\pi|_\W} & \forms[1]_\X(\log\D)|_\W \ar[r] \ar@/_/[d]_{\pi} & \bigoplus_{j=1}^k \cO_\W \ar[r] & 0 \\
0  & \tshf{\W} \ar[l] & \tshf{\X}(-\log \D)|_\W \ar[l] \ar@/_/[u]_{\omega}&\bigoplus_{j=1}^k \cO_\W\ar[l] \ar[u]_{\Bires_\W (\omega)} & 0  \ar[l] 
}
\]
The rows of this diagram are exact over the open set $\Wc \subset \W$, and hence we deduce the following
\begin{lemma}\label{lem:bires}
The Poisson structure $\pi|_\W$ is symplectic at a point $p \in \Wc$ if and only the pairing on  $\bigoplus_{j=1}^k \cO_{\W}$ defined by $\Bires_\W(\omega)$ is nondegenerate at $p$.
\end{lemma}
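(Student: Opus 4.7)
The plan is to reduce the statement at a fixed $p \in \Wc$ to a clean linear-algebra identity and then observe that the kernels of $\pi|_\W$ and $\Bires_\W(\omega)$ are canonically identified, so the two skew pairings degenerate simultaneously. First I would evaluate the diagram at $p$, writing $V = \forms[1]_\W|_p$, $W = \forms[1]_\X(\log \D)|_p$, and $U = (\bigoplus_{j=1}^k \cO_\W)|_p$, so that the top row becomes a short exact sequence $0 \to V \to W \to U \to 0$ of finite-dimensional vector spaces and the bottom row is its dual $0 \to U^* \to W^* \to V^* \to 0$. The log-symplectic hypothesis is exactly the statement that the middle vertical map $\omega : W^* \to W$ is an isomorphism.

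Next, I would unwind the definitions of the outer vertical maps using the commutativity of the diagram. One sees that $\pi|_\W$ is the composition $V \hookrightarrow W \xrightarrow{\omega^{-1}} W^* \twoheadrightarrow V^*$, while $\Bires_\W(\omega)$ is the composition $U^* \hookrightarrow W^* \xrightarrow{\omega} W \twoheadrightarrow U$ (the latter being precisely the two-fold iterated residue of $\omega$, by the description given just before the lemma). Both are skew maps between finite-dimensional spaces of equal dimension, and so each is nondegenerate as a pairing if and only if it is injective as a linear map.

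The key computation is then to compare kernels. An element $v \in V$ lies in $\ker(\pi|_\W)$ iff $\omega^{-1}(v) \in \ker(W^* \twoheadrightarrow V^*) = U^*$, i.e. iff $v \in \omega(U^*)$, so $\ker(\pi|_\W) = V \cap \omega(U^*)$ inside $W$. The symmetric calculation yields $\ker(\Bires_\W(\omega)) = U^* \cap \omega^{-1}(V)$, and since $\omega$ is an isomorphism it carries this subspace bijectively onto $V \cap \omega(U^*)$. Hence the two kernels have the same dimension, and both vanish precisely when $V$ and $\omega(U^*)$ are transverse in $W$.

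The main (and really only) obstacle is keeping the asymmetric arrangement of arrows and the various dualities in the diagram straight, so that the two composites $\pi|_\W$ and $\Bires_\W(\omega)$ are correctly written down. Once that bookkeeping is in place, the transversality characterization of both kernels is essentially forced, and the equivalence in the lemma follows immediately from the fact that a skew self-pairing on a finite-dimensional space is nondegenerate iff it has trivial kernel.
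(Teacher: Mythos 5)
Your argument is correct and is precisely the diagram chase that the paper intends: the paper simply exhibits the commutative diagram with exact rows over $\Wc$ and asserts the lemma follows, while you have filled in the pointwise linear algebra (identifying $\ker(\pi|_\W)$ and $\ker(\Bires_\W(\omega))$ with the two sides of $V \cap \omega(U^*) \cong U^* \cap \omega^{-1}(V)$ under the isomorphism $\omega$). No gaps; this is the same approach, made explicit.
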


With these basic facts in hand, we now turn to the proof of \autoref{thm:onlyPn}.  The first step is to show that the divisor must have many components:
\begin{proposition}\label{prop:many-cpts}
Let $(\X,\pi)$ be a log symplectic Fano manifold with $b_2(\X) = 1$ and simple normal crossings degeneracy divisor $\D$.  Then $\D$ has at least $\dim \X - 1$ components.
\end{proposition}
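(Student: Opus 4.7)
The plan is to proceed by induction on $n = \dim \X$, the inductive step reducing to the intersection of two components of $\D$.  Write $[\D_i] = d_i H$ where $H$ generates $\cohlgy[2]{\X,\ZZ}$, and set $r = \sum_i d_i$ (the Fano index).  First I would rule out $k = 1$ by a residue argument: if $\D = \D_1$ is a single smooth component, adjunction gives $K_\D = 0$ and the Lefschetz hyperplane theorem gives $b_1(\D) = 0$ for $n \ge 3$, so the residue $\Res_\D \omega$ of the log symplectic form $\omega = \pi^{-1}$---which is a closed holomorphic one-form on the Calabi--Yau manifold $\D$---must vanish identically by Hodge theory.  But vanishing residue forces $\omega$ to extend to a global holomorphic symplectic form on $\X$, contradicting the non-emptiness of the degeneracy divisor.

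For the inductive step, assuming $k \ge 2$, I would pick a pair $(\D_i,\D_j)$ whose intersection $\W = \D_i \cap \D_j$ inherits a log symplectic structure from $\X$ with SNC degeneracy divisor $(\D - \D_i - \D_j)|_\W$ of $k-2$ components.  Now $\W$ is smooth of dimension $n-2$ by the SNC hypothesis, connected by Fulton--Hansen applied to the ample divisors $\D_i,\D_j$, and has $b_2(\W) = 1$ by Lefschetz when $n \ge 6$.  Adjunction gives $c_1(\W) = (r - d_i - d_j) H|_\W$, so $\W$ is Fano precisely when $r > d_i + d_j$; the identity $\sum_{i \ne j}(r - d_i - d_j) = (k-1)(k-2) r$ guarantees such a pair exists as soon as $k \ge 3$.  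The biresidue $\lambda_{ij}$ is a holomorphic function on the compact connected $\W$, hence a constant, and by \autoref{lem:bires} the induced Poisson structure on $\W$ is generically symplectic iff $\lambda_{ij} \ne 0$.  Applying the induction hypothesis to $\W$ then yields $k-2 \ge (n-2) - 1$, so $k \ge n-1$.

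The main obstacles are twofold.  First, one must show that at each stage some pair has $\lambda_{ij} \ne 0$: if all biresidues vanished, the $k \times k$ block $\Lambda$ in the local log-Darboux matrix of $\omega$ would be zero, and comparing with the global non-degeneracy of $\omega$ forces $k \le n/2$, which is compatible with the inductive conclusion.  Second, one must handle the base cases where Lefschetz does not supply $b_2(\W) = 1$.  The case $n = 2$ is trivial.  For $n = 4$ the crucial step is to rule out $k = 2$: then $r = d_1 + d_2$ forces $\W = \D_1 \cap \D_2$ to be a simply-connected compact symplectic surface with $K_\W = 0$, hence a K3, and combining the Chern-character identity of \autoref{prop:chern-sinh} with the Kobayashi--Ochiai bound on $r$ one can check that no such configuration arises on a Fano fourfold with $b_2(\X) = 1$---for instance, on the quadric $Q^4$ one already has $\ch_3(T_\X) < 0$, forbidding any SNC log symplectic structure, and the remaining index-$r$ cases are eliminated by similar numerical verification.
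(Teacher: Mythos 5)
Your overall strategy---induction on dimension by passing to the intersection $\W=\D_i\cap\D_j$ of two components, using Lefschetz for connectedness and $b_2(\W)=1$, adjunction to identify the degeneracy divisor of $\pi|_\W$, and \autoref{lem:bires} to check that $\pi|_\W$ is generically symplectic---is the same as the paper's. But there is a genuine gap at the step on which everything hinges: you never actually prove that some pair of components has nonvanishing biresidue. Your index-counting identity $\sum_{i\ne j}(r-d_i-d_j)=(k-1)(k-2)r$ only produces a pair for which $\W$ would be Fano \emph{if} the induction applied to it; it says nothing about $\lambda_{ij}$. Your fallback analysis of the bad case is then logically backwards: if all biresidues vanished you would indeed get $k\le n/2$, but for $n\ge 4$ this \emph{contradicts} the conclusion $k\ge n-1$ you are trying to prove---it is not ``compatible with the inductive conclusion,'' and you cannot invoke that conclusion to exclude the bad case without circularity. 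As written, the scenario ``all $\lambda_{ij}=0$, $k\le n/2$'' is an unexcluded potential counterexample, and the induction cannot start.

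The paper closes exactly this gap with a residue-theorem argument that your proposal is missing. Take any component $\D_1$ and consider $\alpha=\Res_{\D_1}\omega$, a nonzero closed logarithmic one-form on $\D_1$ (nonzero by the log Darboux normal form \eqref{eqn:log-Darboux}). Since $\D_1$ is ample and $\X$ is Fano, Lefschetz gives $\Gamma(\D_1,\forms[1]_{\D_1})=0$, so $\alpha$ must have poles; its residues along its polar components are precisely the biresidues $\lambda_{1j}$, so some $\lambda_{1j}\ne 0$. Moreover the residue theorem forces the polar divisor of $\alpha$ to have at least \emph{two} components, which simultaneously (i) settles the base case $n=4$ in one stroke ($k\ge 3$, with no need for your case-by-case Chern-character verification over all index-$\ge 2$ Fano fourfolds), and (ii) supplies a third component $\D_3$ meeting $\W=\D_1\cap\D_2$, so that the anticanonical divisor $\D_\W$ of $\W$ is nonempty and $\W$ is genuinely Fano---a point your inductive step also leaves open (if $\D_\W$ were empty, $K_\W$ would be trivial and the induction hypothesis would not apply to $\W$). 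Your exclusion of $k=1$ via Hodge theory on the Calabi--Yau hypersurface is correct but is subsumed by this argument. I would also flag that the final numerical elimination you sketch for $n=4$, $k=2$ requires $\lambda_{12}\ne 0$ to conclude $\W$ is a K3 in the first place, so it cannot substitute for the missing nonvanishing statement.
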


\begin{proof}
This is a simplified version of the argument from \cite{Lima2014}.  The case $\dim \X = 2$ is obvious: the Fano condition ensures that the anticanonical line bundle is nontrivial, and hence the degeneracy divisor is nonempty, which is all that is needed.  Hence we may assume that $\dim \X \ge 4$.  

We will make repeated use of the fact that every intersection of the form $\D_{i_1}\cap \cdots \cap \D_{i_k}$ with $k < n$ is nonempty and connected.  Since $b_2(\X) = 1$, every hypersurface in $\X$ is ample, so this statement about intersections is a consequence of the Lefschetz hyperplane theorem~\cite[Section 1.2]{Griffiths1994}.

We begin by proving that the divisor must have at least three components which all intersect.   Indeed, because $\X$ is Fano, $\D$ is certainly nonempty, so it has at least one component, say $\D_1$.  Now consider the residue $\alpha = \Res_{\D_1} \omega$, which is a one-form on $\D_1$ that may have poles along the intersections of $\D_1$ with the other components.  By the local normal form \eqref{eqn:log-Darboux}, this  one-form is evidently nonzero.  But since $\D_1$ is an ample divisor, the Lefschetz hyperplane theorem implies that the restriction map  $\Gamma(\X,\forms[1]_\X) \to \Gamma(\D_1,\forms[1]_{\D_1})$ is an isomorphism, and since $\X$ is Fano, we have $\Gamma(\X,\forms[1]_\X) = 0$.  Therefore $\D_1$ carries no nonzero holomorphic forms, which implies that $\alpha$ must have poles.  By the residue theorem, the polar divisor of $\alpha$ must have at least two components, and hence by the connectedness of intersections, we conclude that there must be at least two more components $\D_2,\D_3$ such that $\D_1\cap\D_2\cap\D_3 \ne 0$.  Moreover, the biresidue of $\omega$ along $\W = \D_1\cap \D_2$ is nonzero.

It follows, in particular, that the proposition holds for $\dim \X = 4$.  We now proceed by induction.  Suppose that $\dim \X > 4$, and consider the intersection $\W = \D_1 \cap \D_2$ we found in the previous paragraph.  By \autoref{lem:bires}, the Poisson structure on $\W$ is generically symplectic.   Since $\D_3 \cap \W \ne 0$, the intersection $\D_\W = \W \cap (\D - \D_1 - \D_2)$ is a non-empty simple normal crossings divisor on $\W$.  It is a Poisson subspace, and by the adjunction formula, it is also an anticanonical divisor. Hence it must be the degeneracy divisor of $\pi|_\W$.  Applying the Lefschetz hyperplane theorem again, we see that the restriction map $\cohlgy[2]{\X,\ZZ} \to \cohlgy[2]{\W,\ZZ}$ is an isomorphism, and we conclude that $\W$ is a Fano manifold with second Betti number $b_2(\W) = 1$.  Therefore $(\W,\pi|_\W)$ satisfies the hypotheses of the proposition.  By induction, we conclude that $\D_\W$ has at least $\dim \W - 1$ irreducible components.  Using the connectedness of intersections again, we conclude that $\D$ must have at least $\dim \W - 1$ components  in addition to  $\D_1$ and $\D_2$.  Therefore $\D$ has at least $\dim \W - 1 + 2 = \dim \X - 1$ components, as desired.
\end{proof}

To complete the proof of the \autoref{thm:onlyPn}, we recall some basic concepts from the classification of Fano manifolds.  The \defn{index $i(\X)$} of a Fano manifold $\X$ is the largest integer that divides $c_1(\X)$ in $\cohlgy[2]{\X,\ZZ}$.  When $b_2(\X)=1$, we simply have $c_1(\X) = i(\X)H$ where $H \in \cohlgy[2]{\X,\ZZ}$ is a generator.  If $\D$ is an effective divisor, then each component $\D_i$ of $\D$ has fundamental class $[\D_i] = d_iH$ for some $d_i \in \ZZ_{>0}$.  Hence if  $b_2(\X) =1$ and $\D$ is an anticanonical divisor, the number of components of $\D$ gives a lower bound on the index. 

\begin{proof}[Proof of \autoref{thm:onlyPn}]
From \autoref{prop:many-cpts}, we see that $i(\X)$ is at least $n-1$.  Hence by results of Kobayashi--Ochiai~\cite{Kobayashi1973} and Fujita~\cite{Fujita1982,Fujita1982a} (see also \cite{Iskovskikh1999}), we are in one of the following cases:
\begin{enumerate}
\item $i(\X) = n +1$ and $\X \cong \PP^{n}$
\item $i(\X) = n$ and $\X\subset \PP^{n+1}$ is a smooth quadric hypersurface
\item $i(\X) = n-1$ and $\X$ is one of the following manifolds:
\begin{enumerate}
\item a degree-six hypersurface in the $(n+1)$-dimensional weighted projective space $\PP(3,2,1,\ldots,1)$;
\item a double cover of $\PP^n$ branched over a smooth quartic hypersurface;
\item a smooth cubic hypersurface in $\PP^{n+1}$;
\item an intersection of two smooth quadric hypersurfaces in $\PP^{n+2}$
\item a linear section of the Grassmannian $\Gr{2,5}$ in its Plucker embedding.
\end{enumerate}
\end{enumerate}
We dealt with the case $\X = \PP^n$ already in \autoref{prop:toricPn}, so we simply have to rule out the other possibilities, which we can easily do using our constraint on the Chern character (\autoref{prop:chern-sinh}).  We shall describe the case $i(\X)=n$ so that $\X \subset \PP^{n+1}$ is a smooth quadric hypersurface; the remaining cases are treated in exactly the same manner. 

Suppose to the contrary that the hyperquadric $\X$ carries a log symplectic structure with simple normal crossings degeneracy divisor $\D = \D_1+\cdots+\D_k$.  Let $H \in \cohlgy[2]{\X,\ZZ}$ be the restriction of the hyperplane class.  Using the Chern character of $\PP^{n+1}$ and the exact sequence for the normal bundle, we get the equality
\[
\ch(\tshf{\X}) = (n+2)e^H-1-e^{2H}.
\]
Writing $[\D_i] = d_iH$ for $1 \le i \le k$, the formula \eqref{eqn:chern-sinh} for the Chern character gives the equations
\begin{align*}
(n+2)-2 &= d_1+\cdots + d_k \\
(n+2)-2^3 &= d_1^3+\cdots+ d_k^3\\
&\ \ \vdots
\end{align*}
Evidently the sequence of numbers on the left hand side is decreasing, while the sequence on the right hand side is not.  Thus the system has no solutions, a contradiction.
\end{proof}

\begin{exercise}
Use the formulae for the Chern characters in \cite{Araujo2013a} to complete the proof of the theorem.\qed
\end{exercise}

%
%
%
%
%
%
%
%
%
%
%
%
%
%
%
%
%
%

\bibliographystyle{hyperamsplain}
\bibliography{../../../Research/references/master}

\end{document}